\newcommand{\Ii}{1\!\!1}
\newcommand{\text}[1]{\mbox{#1}}
\newtheorem{theorem}{Theorem}[section]
\newtheorem{corollary}[theorem]{Corollary}
\newtheorem{lemma}[theorem]{Lemma}
\newtheorem{proposition}[theorem]{Proposition}
\newtheorem{definition}[theorem]{Definition}
{\theorembodyfont{\rm} \newtheorem{example}[theorem]{Example}
\newtheorem{remark}[theorem]{Remark} }
\newenvironment{proof}{\noindent\textbf{Proof.} }{\hfill $\blacksquare$}
\def\a0{a^{(0)}}
\newcommand{\email}[1]{\small \sl#1}
\newcommand{\institute}[1]{#1}
\newcommand{\keywords}[1]{\noindent \textbf{Keywords: }#1}
\newcommand{\bbbr}{\mathbb{R}}
\newcommand{\bbbn}{\mathbb{N}}
\newcommand{\bbbz}{\mathbb{Z}}
\newcommand{\bbbc}{\mathbb{C}}
\begin{document}

\title{The free Kawasaki dynamics of continuous particle systems in infinite volume}
\date{}

\author{\\Yuri G.~Kondratiev\\
\institute{\small Fakult{\"a}t f{\"u}r Mathematik, Universit{\"a}t Bielefeld, \small D 33501 Bielefeld, Germany
\\ \small Forschungszentrum BiBoS, Universit{\"a}t Bielefeld,
\small D 33501 Bielefeld, Germany}\\
\email{kondrat@mathematik.uni-bielefeld.de} \\[0.2cm]
Tobias Kuna\\ \institute{\small Dipartimento di Ingegneria, Scienze \small dell'Informazione e Matematica,\\ \small Universit\'a degli Studi dell'Aquila, \small I 67100 L'Aquila, Italy\\
\email{tobias.kuna@univaq.it}}\\[0.3cm]
Maria Jo{\~a}o Oliveira\\
\institute{\small DCeT, Universidade Aberta, \small P 1269-001 Lisbon, Portugal\\
\small CMAFCIO, University of Lisbon, \small 1749-016 Lisbon}\\
\email{mjoliveira@ciencias.ulisboa.pt}\\[0.3cm] 
Jos{\'e} Lu{\'\i}s da~Silva\\
\institute{\small CCM, University of Madeira, \small P 9000-390 Funchal, Portugal \\
\email{joses@staff.uma.pt}}\\[0.3cm] 
Ludwig Streit\\
\institute{\small Forschungszentrum BiBoS, Universit{\"a}t Bielefeld,
\small D 33501 Bielefeld, Germany \\
\email{streit@uma.pt}}
}

\maketitle

\newpage
\textit{In memory of our colleague and friend Maria Concei{\c c}{\~a}o Carvalho}

\begin{abstract}
An infinite particle system of independent jumping particles in infinite volume is considered. Their construction is recalled,
further properties are derived, the relation with hierarchical equations, Poissonian analysis, and second quantization are discussed. The hydrodynamic limit for a general initial distribution satisfying a mixing condition is derived. The long time asymptotic is computed under an extra assumption. The relation with constructions based on infinite volume limits is discussed.
\end{abstract}

%\begin{keyword}
\keywords{Infinite particle systems, Kawasaki dynamics, hydrodynamic limit, long time asymptotic}
%% keywords here, in the form: keyword \sep keyword

%% MSC codes here, in the form: \MSC code \sep code
%% or \MSC[2008] code \sep code (2000 is the default)
%\MSC 82C21 \sep 60G55 \sep 60J75 \sep 37A60
%\end{keyword}

%\noindent {\bf Keywords:} Infinite particle systems, Kawasaki dynamics, hydrodynamics limit, large time asymptotic

\noindent {\bf 2000 AMS Classification:}   82C21, 60G55, 60J75, 37A60

\tableofcontents

\section{Introduction}

Here we study systems with a large number of particles where the dynamics consists of particles jumping at random times.  To gain insights to these systems one studies them at different scales which are relevant to the system, for us:  the scale on which the individual movement is happening, the typical distance between particles, the scale on which we observe the system and finally the size of the system. The key feature of this paper is that we consider the regime where the system size is much larger than all the other scales and try to evaluate what kind of challenges that may bring. The more commonly studied case is where the scale on which the system is observed and the system size are of the same order, which simplifies the situation substantially. Most considerations in this paper become almost trivial in this case, see Subsection~\ref{subsecfv} for a detailed discussion.

We will walk in this paper through mathematical and some conceptual challenges which appear when the system size is much larger than the other scales. For simplicity we model this through an infinite system, as it means all results derived are automatically independent of the system size. For example, relaxation rates, which depend on the system size, may easily give rise to so slow relaxation of large systems failing to capture the physically relevant relaxation mechanism. 
Infinite systems are very challenging, see  \cite{S89} and \cite{CMS05} for Hamiltonian dynamics and \cite{Fr87}, \cite{Sp86} for gradient diffusions. Despite serious efforts in both cases our understanding is far from complete and rigorous results are sparse.  In order to get a better understanding of the challenges one has to face treating infinite systems, we will consider a non-interacting system where each particle follows a jump process. This allows us to have a very explicit representation of the dynamics and to treat the following three properties in great detail: construction of dynamics (Section~\ref{Section2} and \ref{seqeq}), long time asymptotic (Subsection~\ref{subsecLargetime} and \ref{SubsecGenTemp}), and hydrodynamic limits (Subsection~\ref{SubSecHydro} and \ref{SubsecGenHydro}).

The study of the dynamics of the infinite particle system without interaction can be reduced to the analysis of a type of one-particle dynamics. However, this one-particle dynamics being an effective description of the infinite particle system dynamics has to be considered with initial "distributions" which are not integrable, but merely bounded, as they actually describe the particle density of the infinite particle system. To stress this different interpretation, we will call it a pseudo-one-particle system in the following. Each constant initial "distribution" gives rise to a distinct invariant measure of the infinite particle system, in our case, Poisson point random fields with constant intensity. These are all mutually singular probability measures and hence the system cannot be ergodic. If we want to understand the dynamics of the infinite particle system in full we are forced to work with initial distributions which are singular towards each other. In this paper, we consider increasingly more general initial distributions corresponding to different physical realms. In Section~\ref{seqeq} we consider system in equilibrium that is point random fields with density with respect to a Poisson point random field with constant intensity, in Section~\ref{led} locally equilibrium, that is Poisson point random fields with non-constant intensity and in Section~\ref{nonled} non-equilibrium, that is general point random fields with some moment conditions.

The reader may wonder whether in the free case not everything is totally obvious. We try to tackle all difficulties using the most straightforward explicit techniques to make the challenges transparent. All these difficulties are innate for large system size, see Subsection~\ref{subsecfv}. Though most of the techniques will not extend to the interacting case the challenges should be maintained if not increased.  
%These difficulties are absent and all properties can be established easily for systems where their size is of the same order as the scale of observation, see Subsection~\ref{subsecfv}.
%The explicit description of the dynamics makes the challenges quite transparent and they challenges should only get greater in the interacting case.

Let us give a more technical introduction into the paper.
In Section~\ref{SecDefKa}  we describe the generator of the infinite particle dynamics, the state space and associated notions. The dynamic considered here is a version in the continuum of the well established Kawasaki dynamics for lattice gas systems. These are random evolutions of particle systems in which  individual particles jump in the space with
rates leading to a Gibbs state in the continuum as an invariant
measure \cite{G82}, \cite{KLRII05} and \cite{BKO13}. Having the particle number as a conserved quantity these dynamics give rise to a continuous family of invariant measures which are important to study the so-called hydrodynamic limits, see e.g. \cite{PR91} and \cite{KL99}.
Continuous versions of Glauber dynamics were introduced in
\cite{G81}, \cite{BCC02}, \cite{KL05}, \cite{KLRII05}, \cite{GK06}, \cite{P08}, \cite{KKO13}.
In Subsection~\ref{SecQuasi} we introduce and recall properties of the associated (pseudo)-one-particle operator. 

In Section~\ref{Section2} we discuss different ways to construct the infinite particle dynamics. In Subsection~\ref{Subsection2.1} we recall a probabilistic approach worked out in \cite{KLR05}. The free infinite particle dynamics is just the product of countably many copies of the one particle dynamics. However, to consider this product as a stochastic movement in the configuration space as further step one has to show that a.s. in any finite time interval
only finitely many jump events are visible in a bounded observation
window, i.e., the number of particles in the window stays
finite and only finitely many particles pass the window. This does not hold for any initial configuration and the processes can only be started in a restricted set of initial configurations or in initial distributions supported therein. 
We add to the results of \cite{KLR05}
the description of the path-space measure corresponding to the process.

This construction method cannot be extended to interacting systems. A probabilistic construction writing the system as a limit of finite particle systems will require a quite detailed control, see e.g. \cite{Fr87}.  In Subsection~\ref{Subsection2.2} we present an analytic approach to construct the semi-group which may also work for interacting systems. The approach is based on writing the dynamics in terms of the system of correlation functions, see \cite{BKO13} for such a construction in the interacting case. If one only starts in probability measure with a density with respect to an invariant measure, that is in the equilibrium case, techniques from second quantisation become available, cf.~Section~\ref{seqeq}. In Section~\ref{SecFock} we give a description of the dynamics in terms of Fock spaces which allow also a comparison with non-relativistic quantum fields type descriptions. In the case of symmetric dynamics the 
powerful methods of Dirichlet forms may be applied, in particular, existence
can be shown even for general interacting systems,
cf.~\cite{KLRII05} and references therein. In the
case of the free Kawasaki process, one can
apply in addition second quantization techniques,
 which give a full description of the $L^2$-theory, also for
non-symmetric jump rates, cf.~Section~\ref{Aug08-eq2}.

In Section~\ref{led} we consider the case in which one starts the process in a Poisson point random field with non-constant intensity, that is the system is in local equilibrium. The path space measure is then a Poisson point random field on path space. $L^p$ techniques break down already in this case and hence Dirichlet form techniques cannot be applied, see Remark~\ref{remnosemi}. In Subsection~\ref{subsecLargetime} we consider the long time asymptotic of the one-dimensional distribution in time and show that it converges to an invariant measure, that is a Poisson random field with constant intensity. The constant is the arithmetic mean of the initial intensity, see Definition~\ref{defamean}. This is easily shown in Proposition~\ref{PrAsya} under the assumption that the Fourier transform of the intensity $z$ is a signed measure. For such $z$ the arithmetic mean exists, see Corollary~\ref{Colmeana}, but not any bounded non-negative function $z$ for which the arithmetic mean exists and the time asymptotic converges has a signed-measure as Fourier transform, see Remark~\ref{rem::fourieractiv}. There also exist  bounded non-negative functions for which the arithmetic mean does not exist, see Remark~\ref{Aug08-eq1}. In order to establish a general result one needs a better analytic control of the behaviour of $z$ at infinity.

%The key fact is that the Fourier transform of the transition probability kernel of the one-particle system converges to the function which is one at zero and zero otherwise. As the Fourier transform of $z$ is in general not a regular distribution it cannot be considered as  which cannot be considered as zero

In Subsection~\ref{SubSecHydro} we scale space and time simultaneously to obtain a macroscopic description in terms of partial differential equation. When the Fourier transform of the activity is a signed measure the proper choice of a scale and the computation of the limit is direct. For general bounded activities one has to control higher derivatives, see Proposition~\ref{Prohydrogen}.

In Subsection~\ref{subsecfv}, we compare the results for systems where the system size is of the same order as the scale on which one observes the system and we derive a representation of the continuous Fourier transform in order to see the technical difference with the previous results. The finite system size induces a spectral gap of size $L^{-2}$ which eliminates all regularity requirements on the jump rate and the activity $z$.  We also show that for mildly larger system size this effect essentially maintains.  In \cite{DSS82} the authors showed the convergence to the hydrodynamic limit, with much more sophisticated techniques, uniformly in time for an infinite particle system but of  independent Brownian particles. We also discuss why in the infinite volume one cannot give a comparison result between the free Kawasaki dynamics and the system of independent Brownian particles.

The above described results about long time asymptotic and
hydrodynamic limits are extended to general initial distributions
 in Section~\ref{nonled}. We require some kind of mixing property
 for the initial distributions,
the so-called decay of correlation, which we formulate in terms of
the cumulants (or Ursell functions). For the hydrodynamic limit we
have to require in addition that the first correlation function of
the initial measure converges reasonably under the scaling. 
%The first correlation function is the density of the first moment measure of the empirical field.
%In the hydrodynamic limit the solutions of the associated partial
%differential equations are expressed as the limits of the first
%correlation functions, cf.~Proposition~\ref{PrAsyGibbs}. The large
%time asymptotic is clearly again a Poisson measure for a constant
%intensity. However, the constant is now the arithmetic mean of the
%first correlation function of the initial distribution.

We prove in Section~\ref{SecGibbs} that the conditions on the initial distributions required above
are fulfilled, in particular, by Gibbs measures in the high
temperature low activity regime.

\section{Kawasaki dynamics\label{SecDefKa}}

\subsection{The generator}

The configuration space $\Gamma :=\Gamma _{\bbbr^d}$ over $\bbbr^d$,
$d\in \bbbn$, is defined as the set of all locally finite subsets of
$\bbbr^d$,
\[
\Gamma :=\left\{ \gamma \subset \bbbr^d:\left| \gamma_\Lambda\right|
<\infty \hbox{
for every compact }\Lambda\subset \bbbr^d\right\} ,
\]
where $\left| \cdot \right| $ denotes the cardinality of a set and
$\gamma_\Lambda:= \gamma\cap\Lambda$. As usual we
identify each $\gamma \in \Gamma $ with the non-negative Radon measure
$\sum_{x\in \gamma }\delta_x\in \mathcal{M}(\bbbr^d)$, where
$\delta_x$ is the Dirac measure with mass at $x$,
$\sum_{x\in\emptyset}\delta_x$ is, by definition, the zero measure, and
$\mathcal{M}(\bbbr^d)$ denotes the space of all non-negative Radon
measures on the Borel $\sigma$-algebra $\mathcal{B}(\bbbr^d)$.
This identification allows to endow $\Gamma $ with the topology induced by the
vague topology on $\mathcal{M}(\bbbr^d)$, i.e., the weakest topology on
$\Gamma$ with respect to which all mappings
\[
\Gamma \ni \gamma \longmapsto \langle f,\gamma\rangle :=
\int_{\bbbr^d}\gamma(dx)\,f(x)=\sum_{x\in \gamma }f(x),\quad
f\in C_c(\bbbr^d),
\]
are continuous. Here $C_c(\bbbr^d)$ denotes the set of all continuous
functions on $\bbbr^d$ with compact support. By $\mathcal{B}(\Gamma )$ we
will denote the corresponding Borel $\sigma$-algebra on $\Gamma$.

Given a non negative function $a \in L^1(\bbbr^d,dx)$, the
generator $L:=L_a$ of the free Kawasaki dynamics for an infinite
particle system is given by the informal expression
\begin{equation}\label{DefKawaOp}
(LF)(\gamma) := \sum_{x \in \gamma} \int_{\bbbr^d} dy\, a(x-y)
\left( F(\gamma \setminus x \cup y)- F(\gamma) \right).
\end{equation}
We proceed to give a rigorous meaning to the right-hand side of
(\ref{DefKawaOp}). Let $\mathcal{O}_c(\bbbr^d)$ denote the set
of all open sets in $\bbbr^d$ with compact closure. A
$\mathcal{B}(\Gamma)$-measurable function $F$ is called cylinder and
exponentially bounded whenever there is a
$\Lambda\in\mathcal{O}_c(\bbbr^d)$ such that
$F(\gamma)=F(\gamma_\Lambda)$ for all $\gamma \in \Gamma$, and
$|F(\gamma)|\leq C e^{c\vert\gamma_\Lambda\vert}$, $\gamma
\in\Gamma$, for some $C,c >0$. For such a function $F$ one has
\begin{eqnarray}
&&\sum_{x \in \gamma} \int_{\bbbr^d} dy\, a(x-y)
\left| F(\gamma \setminus x \cup y)- F(\gamma) \right|\nonumber \\
&\leq & \left[ |\gamma_\Lambda| \int_{\bbbr^d}\!\!\!\!\!\!dy\,a(y) +
\int_\Lambda \!\!\!\!dy\,\sum_{x \in \gamma}  a(x-y) \right] 2 C e^{c
(\vert\gamma_\Lambda\vert +1)}\label{eq2.1},
\end{eqnarray}
which is finite provided the configuration $\gamma$ is an element in
$\Gamma_a\subset\Gamma$,
\[
\Gamma_a := \left\{ \gamma \in \Gamma : y \mapsto \sum_{x \in
\gamma} a(x-y) \hbox{ is } L^{1}_\mathrm{loc} (\bbbr^d,dy)
\right\}.
\]
In this case, the sum and the integral in (\ref{DefKawaOp}) are finite, and
thus the operator $L$ is well-defined on the space
$\mathcal{F}L_{eb}^0(\Gamma_a)$ of all cylinder functions exponentially
bounded on $\Gamma$ restricted to $\Gamma_a$.

Concerning the set $\Gamma_a$, we note that $\mu(\Gamma_a)=1$ for any
probability measure $\mu$ on $\Gamma$ with first correlation function
$k^{(1)}_\mu$ bounded. This follows from the fact that for each closed ball
$B(n)\subset\bbbr^d$ centered at $0$ and radius $n\in\bbbn$ we have
\begin{eqnarray*}
\int_\Gamma\mu(d\gamma)\int_{B(n)}dy\sum_{x \in \gamma} a(x-y)
&=&\int_{\bbbr^d}dx\,k^{(1)}_\mu (x)\int_{B(n)}dy\,a(x-y)\\
&\leq& C \Vert a\Vert_{L^1(\bbbr^d,dx)}\hbox{vol}(B(n))
\end{eqnarray*}
for any constant $C\geq\vert k^{(1)}_\mu\vert$. Here $\hbox{vol}(B(n))$
denotes the volume of $B(n)$ with respect to the Lebesgue measure on
$\bbbr^d$. Clearly, the latter implies that
$\mu(\Gamma\setminus\Gamma_a)=0$.

In view of these considerations, throughout this work we
shall restrict our setting to $\Gamma_a$.

Among the elements in $\mathcal{F}L_{eb}^0(\Gamma_a)$ we distinguish
the functions $e_B(f)$, called Bogoliubov exponentials,
\begin{equation}\label{eq::defbog}
e_B(f,\gamma):=\prod_{x\in\gamma}(1+f(x)),\quad \gamma\in\Gamma,
\end{equation}
for $f$ being any bounded $\mathcal{B}(\bbbr^d)$-measurable
function with compact support ($f\in B_{c}(\bbbr^d)$). An important reason to single out these functions is due to the especially
simple form for the action of $L$ on them, namely, for all $f\in
B_{c}(\bbbr^d)$ and all $\gamma\in\Gamma_a$,
\begin{eqnarray}
\left(Le_B(f)\right)(\gamma) 
%&=& \sum_{x \in \gamma} \int_{\bbbr^d} dy\,a(x-y)
%\left(f(y)-f(x) \right) e_B(f,\gamma \setminus x)\nonumber \\
&=& \sum_{x \in \gamma} \left(A f\right)(x) e_B(f, \gamma \setminus
x), \label{Eq2.10}
\end{eqnarray}
where
\begin{equation}
\label{OnePartOp} A f(x) := \int_{\bbbr^d} dy\,a(x-y)
\left(f(y) -f(x) \right) .
\end{equation}

In the sequel we call the linear operator $A$ the pseudo-one-particle
operator, though we will explain in which way it differs from the genuine one-particle operator. Due to its special role throughout this work, its
properties will be studied in more detail in the next subsection.

Given a locally integrable function $z\geq 0$, we remind that the
Poisson measure $\pi_z$ with intensity $z$ is the unique probability
measure on $\Gamma$ for which the Laplace transform is given by
\begin{equation}
\int_\Gamma\pi_z(d\gamma )\, e^{\langle\varphi,\gamma\rangle} =\exp
\left( \int_{\bbbr^d}dx\,\left( e^{\varphi
(x)}-1\right)z(x)\right),\label{A7}
\end{equation}
for all $\varphi$ in the Schwartz space
$\mathcal{D}(\bbbr^d):=C_c^\infty(\bbbr^d)$ of all
infinitely differentiable functions with compact support. We recall
that a measure $\mu$ on $\Gamma_a$ is called infinitesimally
reversible with respect to an operator $L$, whenever $L$ is symmetric in $L^2(\Gamma_a,\mu)$.

\begin{lemma}
\label{LemRev} Assume that $a$ is an even function. Then for any
real number $z>0$ the Poisson measure $\pi_z$ with constant
intensity $z$ is an infinitesimally reversible measure with respect
to $L$.
\end{lemma}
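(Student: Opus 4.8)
The goal is to show that $L$ is symmetric on $L^2(\Gamma_a, \pi_z)$ for $\pi_z$ the Poisson measure with constant intensity $z>0$, when $a$ is even. The natural strategy is to test symmetry on a class of functions that is rich enough to span a dense subspace, and the Bogoliubov exponentials $e_B(f)$, $f \in B_c(\mathbb{R}^d)$, are the obvious candidates: they lie in $\mathcal{F}L_{eb}^0(\Gamma_a)$, their linear span is dense in $L^2(\Gamma_a,\pi_z)$, and by (\ref{Eq2.10}) the action of $L$ on them is explicit. So the plan is: first compute $\langle L e_B(f), e_B(g)\rangle_{L^2(\pi_z)}$ for $f,g \in B_c(\mathbb{R}^d)$, then verify it is symmetric under $f \leftrightarrow g$.

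First I would recall the Poisson integration identity for products of Bogoliubov exponentials. Using (\ref{A7}) (or the Mecke identity), one has $\int_\Gamma e_B(f,\gamma) e_B(g,\gamma)\, \pi_z(d\gamma) = \exp\big(\int_{\mathbb{R}^d} (f(x)+g(x)+f(x)g(x))\, z\, dx\big)$, since $(1+f(x))(1+g(x)) = 1 + f + g + fg$. More importantly, I need the Mecke-type formula for expressions of the form $\sum_{x\in\gamma} h(x) e_B(f,\gamma\setminus x)$; by the Mecke identity for the Poisson measure $\pi_z$,
\[
\int_\Gamma \pi_z(d\gamma) \sum_{x\in\gamma} h(x)\, e_B(f,\gamma\setminus x)\, e_B(g,\gamma) = z\int_{\mathbb{R}^d} dx\, h(x)(1+g(x)) \int_\Gamma \pi_z(d\gamma)\, e_B(f,\gamma) e_B(g,\gamma).
\]
Applying this with $h = Af$ and using (\ref{Eq2.10}), and writing $C(f,g) := \int_\Gamma e_B(f)e_B(g)\,d\pi_z$ for the common normalizing factor, I get
\[
\langle L e_B(f), e_B(g)\rangle_{L^2(\pi_z)} = z\, C(f,g) \int_{\mathbb{R}^d} dx\, (Af)(x)\,(1+g(x)).
\]

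The remaining task is to show the quantity $\int_{\mathbb{R}^d}(Af)(x)(1+g(x))\,dx$ is symmetric in $f,g$ (the factor $C(f,g)$ is manifestly symmetric). Since $\int (Af)(x)\,dx = 0$ by Fubini and the change of variables $y\mapsto y$, $x\mapsto x$ — indeed $\int dx\int dy\, a(x-y)(f(y)-f(x)) = 0$ — it reduces to showing $\int (Af)(x) g(x)\,dx = \int (Ag)(x) f(x)\,dx$, i.e. that $A$ is symmetric as an operator with respect to the Lebesgue inner product. Writing it out, $\int dx\,g(x)\int dy\, a(x-y)(f(y)-f(x))$; substituting $x\leftrightarrow y$ in the term containing $f(y)$ and using that $a$ is even so $a(y-x)=a(x-y)$, one obtains exactly $\int dx\, f(x)\int dy\, a(x-y)(g(y)-g(x)) = \int (Ag)(x)f(x)\,dx$. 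This is where evenness of $a$ is used, and it is the only place. Hence $\langle Le_B(f), e_B(g)\rangle = \langle e_B(f), Le_B(g)\rangle$.

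The main obstacle is not the algebra, which is routine, but the functional-analytic bookkeeping: one must check that the Fubini/Mecke manipulations are justified (the integrands are integrable because $f,g$ have compact support, $a \in L^1$, and on $\Gamma_a$ the relevant sums are locally integrable — the bound (\ref{eq2.1}) and the argument that $\pi_z(\Gamma_a)=1$ cover this), and that symmetry on the linear span of $\{e_B(f): f\in B_c\}$, which is a dense subspace of $L^2(\Gamma_a,\pi_z)$ contained in the domain of $L$, indeed yields that $L$ is symmetric in the sense required for infinitesimal reversibility. I would close by invoking density of the Bogoliubov exponentials and linearity of both sides in $f$ and in $g$ to extend the identity from the generators to their span.
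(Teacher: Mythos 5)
Your computation on the Bogoliubov exponentials is correct: with $h=Af$ the Mecke identity indeed gives $\int_{\Gamma_a}\pi_z(d\gamma)\,(Le_B(f))(\gamma)\,e_B(g,\gamma)=z\,C(f,g)\int_{\mathbb{R}^d}(Af)(x)(1+g(x))\,dx$ with $C(f,g)=\int e_B(f)e_B(g)\,d\pi_z$ symmetric, $\int_{\mathbb{R}^d}(Af)(x)\,dx=0$ holds by Fubini for any $a\in L^1$, and the remaining identity $\int (Af)g\,dx=\int (Ag)f\,dx$ is exactly where evenness of $a$ enters. This is a genuinely different route from the paper's proof, which never singles out exponentials: there one takes arbitrary $F,G\in\mathcal{F}L_{eb}^0(\Gamma_a)$, splits $LF$ according to (\ref{DefKawaOp}), and applies the Mecke identity twice to the gain term, so that after relabelling the two integration variables and using $a(-x)=a(x)$ one arrives directly at $\int F\,LG\,d\pi_z$; no reduction to the one-particle operator $A$ and no density considerations are needed.

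The gap is in your final step. The lemma asserts that $L$, with the domain $\mathcal{F}L_{eb}^0(\Gamma_a)$ on which it was defined, is symmetric in $L^2(\Gamma_a,\pi_z)$, and symmetry of an unbounded operator does not pass from a dense subspace of its domain to the whole domain by $L^2$-density alone: from $\langle LF_n,G\rangle=\langle F_n,LG\rangle$ with $F_n\to F$ in $L^2$ you can only take the limit if you also control $LF_n\to LF$, i.e.\ you need approximation of every $F\in\mathcal{F}L_{eb}^0(\Gamma_a)$ by elements of the span of the $e_B(f)$ in graph norm. That is a nontrivial statement --- essentially the fact, proved only later in Lemma~\ref{LemClos} using the bound (\ref{eq2.1}), that the closures of $L$ on the exponential domain and on $\mathcal{F}L_{eb}^0(\Gamma_a)$ coincide --- and ``invoking density of the Bogoliubov exponentials and linearity'' does not supply it: linearity only extends the identity from the exponentials to their span, which is where you already are. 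As written, you have proved symmetry of the restriction of $L$ to that span, not of $(L,\mathcal{F}L_{eb}^0(\Gamma_a))$. Either add the graph-norm approximation (or a closability argument of the type in Lemma~\ref{LemClos}), or do what the paper does and run the Mecke computation directly for general $F,G\in\mathcal{F}L_{eb}^0(\Gamma_a)$, which costs nothing extra.
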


\begin{proof}
For all $F,G\in \mathcal{F}L_{eb}^0(\Gamma_a)$ we have

\begin{eqnarray*}
\int_{\Gamma_a}\pi_z(d\gamma) LF(\gamma)G(\gamma) & =&
\int_{\Gamma_a}\pi_z(d\gamma) \sum_{x \in \gamma} \int_{\bbbr^d}dy\, a(y-x)
F(\gamma \setminus x \cup y)G(\gamma)\\ && -
\int_{\Gamma_a}\pi_z(d\gamma) \sum_{x \in \gamma} \int_{\bbbr^d}dy\, a(x-y)
F(\gamma)G(\gamma).
\end{eqnarray*}
The result follows by applying twice the Mecke identity
(cf.~\cite[Theorem~3.1]{Me67}) to the first integral.
\end{proof}

\begin{remark}
\label{Natal11} It is clear that the linear space spanned by the
class of functions $e_B(f)$, $f\in B_{c}(\bbbr^d)$, is in
$\mathcal{F}L_{eb}^0(\Gamma_a)$. The space
$\mathcal{F}L_{eb}^0(\Gamma_a)$ also contains the class of coherent
states $e_{\pi_{z}}(f)$ corresponding to $f\in
B_{c}(\bbbr^d)$,
\begin{equation}
\label{JL}
e_{\pi_{z}}(f):=\exp\left(-\int_{\bbbr^d}dx\,z(x)f(x)\right)e_B(f).
\end{equation}
see Subsection~\ref{SecFock} for a use of this relation.
\end{remark}

\subsection{The one-particle operator \label{SecQuasi}}

In the sequel the pseudo-one-particle operator $A$ introduced in
(\ref{OnePartOp}) will play an essential role. Because of this, in
this subsection we shall collect its main properties used below. We
observe that in stochastic analysis the operator $A$ is known as a
relatively simple example of a bounded generator of a Markov jump
process on $\bbbr^d$ (see e.g.~\cite[Section~4.2]{EtKu86}).

In terms of the usual convolution $*$ of functions, we also note that
\begin{equation}
Af = a*f - a^{(0)} f , \quad  \a0:=\int_{\bbbr^d}a(x)\,dx.
\label{KF1}
\end{equation}
Therefore, the properties of convolution of functions (namely,
Young's inequality) lead straightforwardly to the $L^p$ results
stated in the next proposition. There, we also consider the real
Banach spaces $B(\bbbr^d)$ and $C_\infty(\bbbr^d)$,
respectively, of all bounded measurable functions and of all
continuous functions vanishing at infinity, both with the supremum
norm $\Vert f\Vert_u:=\sup_{x\in\bbbr^d}\vert f(x)\vert$. We
recall that a strongly continuous contraction semigroup
$(T_t)_{t\geq 0}$ is called sub-Markovian whenever for $0\leq f\leq
1$ it follows $0\leq T_tf\leq 1$ for every $t\geq 0$. If,
in addition, $T_tf_n\nearrow 1$ for some sequence $f_n\nearrow 1$,
then $(T_t)_{t\geq 0}$ is called Markovian. Here, for the spaces
$B(\bbbr^d)$ and $C_\infty(\bbbr^d)$ the convergence is
pointwise, and for an $L^p$-space the convergence is almost
everywhere. A strongly continuous contraction semigroup
$(T_t)_{t\geq 0}$ is called positivity preserving, if $f\geq 0$
implies $T_tf\geq 0$ for every $t\geq 0$.

\begin{proposition}
\label{PropSemiA} The linear operator $A$ is a bounded operator on
$L^p(\bbbr^d,zdx)$, for $z>0$ constant and $1\leq p\leq\infty$
(on $B(\bbbr^d)$ and on $C_\infty(\bbbr^d)$). As a
consequence, $A$ is the generator of a uniformly continuous
semigroup $(e^{tA})_{t\geq 0}$ on $L^p(\bbbr^d,zdx)$, $1\leq
p\leq\infty$ (on $B(\bbbr^d)$ and on $C_\infty(\bbbr^d)$),
\begin{equation}
e^{tA}:= \sum_{n=0}^\infty \frac{(tA)^n}{n!},\label{KF4}
\end{equation}
the sum converging in norm for every $t\geq0$. Moreover, $(e^{tA})_{t\geq 0}$
is a positivity preserving (contraction) semigroup on each
$L^p(\bbbr^d,zdx)$ space, $1\leq p\leq\infty$  (on $B(\bbbr^d)$ and
on $C_\infty(\bbbr^d)$), Markovian on $L^p(\bbbr^d,zdx)$ for
$1\leq p<\infty$.
\end{proposition}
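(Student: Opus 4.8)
The strategy is to read off every assertion from the single structural identity~\eqref{KF1}, $Af = a*f - \a0 f$, together with Young's inequality. First I would establish boundedness. On $L^p(\mathbb{R}^d, z\,dx)$ with $z>0$ constant, the norm is just a constant multiple of the usual $L^p(\mathbb{R}^d,dx)$ norm, so it suffices to work with Lebesgue measure. Since $a\in L^1(\mathbb{R}^d,dx)$, Young's inequality gives $\normlone{a*f}{\mathbb{R}^d}\le \dots$, more precisely $\Vert a*f\Vert_{L^p}\le \Vert a\Vert_{L^1}\Vert f\Vert_{L^p}$ for every $1\le p\le\infty$, whence $\Vert Af\Vert_{L^p}\le 2\a0\Vert f\Vert_{L^p}$ (using $\a0=\Vert a\Vert_{L^1}$ since $a\ge0$). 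For $f\in B(\mathbb{R}^d)$, $a*f$ is again bounded with $\Vert a*f\Vert_u\le\a0\Vert f\Vert_u$; and if moreover $f\in C_\infty(\mathbb{R}^d)$, then $a*f\in C_\infty(\mathbb{R}^d)$ by the standard approximation argument (approximate $a$ in $L^1$ by compactly supported continuous functions, for which $a*f$ is manifestly continuous and vanishing at infinity, and pass to the uniform limit). Hence $A$ is a bounded operator on all the listed spaces.

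Next, a bounded operator on a Banach space generates a uniformly (norm-)continuous semigroup given by the absolutely convergent exponential series~\eqref{KF4}: $\Vert e^{tA}\Vert\le e^{t\Vert A\Vert}<\infty$ and $t\mapsto e^{tA}$ is norm-continuous. This is completely standard and I would just cite it. So the only real content left is the probabilistic structure: positivity preservation, the contraction (sub-Markovian) property, and the Markovian property. For this the useful rewriting is
\[
e^{tA} = e^{-t\a0}\,e^{t\,a*(\cdot)} = e^{-t\a0}\sum_{n=0}^\infty \frac{t^n}{n!}\,a^{*n}*(\cdot),
\]
valid because the two bounded operators $a*(\cdot)$ and multiplication by $-\a0$ commute. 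Here $a^{*n}$ is the $n$-fold convolution power of $a$, with $a^{*0}=\delta_0$ interpreted so that the $n=0$ term is the identity. Since $a\ge0$, each $a^{*n}\ge0$, so convolution with $a^{*n}$ maps nonnegative functions to nonnegative functions; the series has nonnegative coefficients and $e^{-t\a0}>0$, so $f\ge0\Rightarrow e^{tA}f\ge0$. This gives positivity preservation on every space at once. For the contraction/sub-Markovian property: if $0\le f\le 1$, then $0\le a^{*n}*f\le a^{*n}*1 = \Vert a\Vert_{L^1}^n = (\a0)^n$ (for $n\ge1$; for $n=0$ the bound is $f\le1$), hence
\[
0\le e^{tA}f \le e^{-t\a0}\sum_{n=0}^\infty \frac{t^n (\a0)^n}{n!} = e^{-t\a0}e^{t\a0}=1,
\]
so $0\le e^{tA}f\le1$; in particular $e^{tA}$ is a contraction on $B(\mathbb{R}^d)$ and, by the same bound applied to $|f|$ together with positivity, on each $L^p$. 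Finally, taking $f_n\nearrow 1$ (say $f_n=\Ii_{B(n)}$), one has $e^{tA}f_n\nearrow e^{tA}1 = 1$ by monotone convergence inside the series, which is the Markovian property; on $B(\mathbb{R}^d)$ and $C_\infty(\mathbb{R}^d)$ the convergence is pointwise and on $L^p$, $1\le p<\infty$, it is a.e., exactly as stated.

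The main obstacle is not any single hard estimate but keeping the bookkeeping clean: one must be careful that $e^{t\,a*(\cdot)}$ really is the series of convolution powers (which needs the commutation of the two bounded operators and the associativity/continuity of convolution, justifying the rearrangement), and one must treat the $n=0$ term — where ``$a^{*0}*f$'' is $f$ itself, not $\a0$ — separately in the sub-Markovian estimate so that the geometric series still sums to $1$. The case $p=\infty$ and the space $B(\mathbb{R}^d)$ require no $L^1$-density argument, but $C_\infty(\mathbb{R}^d)$ does need the remark that convolution with an $L^1$ function preserves ``vanishing at infinity'', which is the one spot where a short approximation argument is genuinely needed.
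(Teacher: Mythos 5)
Your proposal is correct and follows exactly the route the paper indicates: it exploits the convolution form (\ref{KF1}) together with Young's inequality, and the decomposition $e^{tA}=e^{-t\a0}\sum_{n\geq 0}\frac{t^n}{n!}\,a^{*n}*(\cdot)$ is precisely the standard jump-process construction in the references \cite{EtKu86}, \cite{J01} to which the paper delegates the proof. The only point to tidy in a final write-up is that the $L^p$-contraction for $p<\infty$ comes from applying Young's inequality termwise ($\Vert a^{*n}\Vert_{L^1}=(\a0)^n$) rather than from the sup-norm sub-Markov bound, but this is already implicit in your series representation.
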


For the proof see e.g.~\cite[Section 4.2]{EtKu86}, \cite{J01}.

Due to (\ref{KF1}), the operator $A$, as well as the semigroup
$(e^{tA})_{t\geq 0}$, both either on a $L^p(\bbbr^d,zdx)$ space,
$1\leq p <\infty$,
%on $B(\bbbr^d)$,
or on $C_\infty(\bbbr^d)$,
may be expressed in terms of the Fourier
transform,
\[
\hat f(k):=\frac{1}{(2\pi)^{d/2}}\int_{\bbbr^d}dx\,e^{-i\langle x,  k \rangle} f(x)
\]
see e.g.~\cite{J01}.

\begin{proposition}\label{KFProp1}
For every function $\varphi$ in the Schwartz space
$\mathcal{S}(\bbbr^d)$ of tempered test functions one has
\[
\widehat{A\varphi}(k)=(2\pi)^{d/2}\left(\hat a(k)-\hat a(0)\right)\hat\varphi(k),\quad
k\in\bbbr^d,
\]
and
\begin{equation}
\left(e^{tA}\varphi\right)(x) =
\frac{1}{(2\pi)^{d/2}}\int_{\bbbr^d}dk\,
e^{i\langle k , x \rangle}e^{t(2\pi)^{d/2}(\hat a (k)-\hat a(0))}\hat\varphi(k),\quad x\in
\bbbr^d.\label{Natal5}
\end{equation}
\end{proposition}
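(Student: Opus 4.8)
The plan is to compute the Fourier transform of $A\varphi$ directly from the representation $A\varphi = a*\varphi - \a0\varphi$ established in~(\ref{KF1}), and then to exponentiate. First I would recall that for $\varphi\in\mathcal{S}(\mathbb{R}^d)$ and $a\in L^1(\mathbb{R}^d,dx)$ the convolution $a*\varphi$ is well-defined and the convolution theorem applies: with the chosen normalization of $\hat{\cdot}$, one has $\widehat{a*\varphi}(k)=(2\pi)^{d/2}\hat a(k)\hat\varphi(k)$. Since $\a0=\int_{\mathbb{R}^d}a(x)\,dx=(2\pi)^{d/2}\hat a(0)$, linearity of the Fourier transform gives
\[
\widehat{A\varphi}(k)=(2\pi)^{d/2}\hat a(k)\hat\varphi(k)-(2\pi)^{d/2}\hat a(0)\hat\varphi(k)=(2\pi)^{d/2}\bigl(\hat a(k)-\hat a(0)\bigr)\hat\varphi(k),
\]
which is the first claim.

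For the semigroup formula I would argue by iterating. Write $m(k):=(2\pi)^{d/2}(\hat a(k)-\hat a(0))$; note $m$ is a bounded continuous function since $\hat a\in C_\infty(\mathbb{R}^d)$ by Riemann--Lebesgue and $\|m\|_u\le 2\a0$. By the first part and an easy induction, $\widehat{A^n\varphi}(k)=m(k)^n\hat\varphi(k)$ for every $n\ge 0$ (the inductive step only needs that $A^{n}\varphi$ again lies in a space on which the first identity applies — and indeed $A$ maps $\mathcal{S}(\mathbb{R}^d)$ into $L^1\cap L^\infty$, or one may simply work on the Fourier side throughout since multiplication by the bounded function $m$ preserves $\hat\varphi\in\mathcal{S}$ up to the needed integrability). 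By Proposition~\ref{PropSemiA} the series $e^{tA}\varphi=\sum_{n\ge 0}t^nA^n\varphi/n!$ converges in norm, hence in a sense strong enough to pass to Fourier transforms term by term; using $\sum_n t^n m(k)^n/n! = e^{tm(k)}$ and the bound $\sum_n t^n\|m\|_u^n|\hat\varphi(k)|/n! = e^{t\|m\|_u}|\hat\varphi(k)|\in L^1(dk)$ (as $\hat\varphi\in\mathcal{S}$), dominated convergence yields
\[
\widehat{e^{tA}\varphi}(k)=e^{t m(k)}\hat\varphi(k)=e^{t(2\pi)^{d/2}(\hat a(k)-\hat a(0))}\hat\varphi(k).
\]
Applying the Fourier inversion formula then gives~(\ref{Natal5}).

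The only genuinely delicate point is the justification of interchanging the Fourier transform with the infinite sum, and of the Fourier inversion at the end; both are handled by the uniform bound $\|m\|_u\le 2\a0<\infty$ together with the rapid decay of $\hat\varphi$, so dominated convergence applies cleanly. One small bookkeeping matter is to make sure each $A^n\varphi$ is regular enough for the convolution identity to be legitimate at every stage; this is why it is cleanest to transfer the whole computation to the Fourier side from the outset, where $A$ acts simply as multiplication by the bounded multiplier $m$, so that no regularity is lost under iteration. Everything else is a routine application of Young's inequality and the convolution theorem.
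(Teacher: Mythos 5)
Your proposal is correct and follows exactly the route the paper itself indicates: the paper gives no detailed proof of Proposition~\ref{KFProp1}, but derives it from the representation $A\varphi=a*\varphi-\a0\varphi$ in (\ref{KF1}) together with standard Fourier-multiplier arguments, citing \cite{J01}. Your computation via the convolution theorem, the identity $\a0=(2\pi)^{d/2}\hat a(0)$, and term-by-term exponentiation of the norm-convergent series (\ref{KF4}) with the bound $\Vert m\Vert_u\leq 2\a0$ and Fourier inversion is precisely that standard argument, carried out in full.
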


\begin{remark}
\label{Natal6}
Since $a$ is non-negative, for all $k\in\bbbr^d$ one has
\begin{equation}
\mathrm{Re}(\hat a(k) -\hat a(0))
=\frac{1}{(2\pi)^{d/2}} \int_{\bbbr^d}dx\,(\cos (\langle k, x \rangle )-1)a(x)\leq 0.
\label{Ze}
\end{equation}
The equality in (\ref{Ze}) holds only for $k=0$. This follows from the fact
that the set $\{x: \langle k , x \rangle =2n\pi, n\in\bbbz\}$ has zero Lebesgue measure
if and only if $k\not= 0$.
\end{remark}

\begin{remark}
According to Proposition \ref{KFProp1}, the semigroup
$(e^{tA})_{t\geq 0}$ is defined by a kernel $\mu_t$, $t\geq 0$,
whose Fourier transform is given by
\[
\hat\mu_t(k)= \frac{1}{(2\pi)^{d/2}}e^{t(2\pi)^{d/2}(\hat a (k)-\hat a(0))}.
\]
Because $a$ is non-negative, $\hat a$ and thus $\hat\mu_t$, $t\geq
0$, are positive definite functions. Bochner's theorem yields that each $\mu_t$ is a non-negative finite measure on
$\bbbr^d$. We note that the Markovian property of
$(e^{tA})_{t\geq 0}$ means that each $\mu_t$ is actually a
probability measure. For more details see e.g.~\cite{J01}.
\end{remark}

\begin{remark}\label{KFRem1}
It is easy to check that on the space $L^2(\bbbr^d,zdx)$ the adjoint
operator $A^*$ of $A$ is defined by
\[
(A^* f)(x) = \int_{\bbbr^d}dy\,a(y-x) (f(y)-f(x))+ \int_{\bbbr}dy \big(a(y)-a(-y)\big)\  f(x),\quad
f\in L^2(\bbbr^d,zdx).
\]
As a consequence, if $a$ is an even function, then $A$ is a bounded
self-adjoint operator on $L^2(\bbbr^d,zdx)$. In this case, it follows
from (\ref{KF4}) that the semigroup $(e^{tA})_{t\geq 0}$ is a self-adjoint
contraction on $L^2(\bbbr^d,zdx)$.
\end{remark}

\begin{remark}
\label{remnoncons} For a non-constant activity parameter $z$, the
operator $A$ is still bounded but, in general, the semigroup
$(e^{tA})_{t\geq 0}$ is not any longer a contraction. For instance,
on $\bbbr$ consider $a(x)=e^{-x^2}$ and $z(x)= 1
+ e^{-x^2 +4x}$. A simple calculation shows that for
$\varphi(x)=\pi^{-1/2} e^{-x^2}$ one has $\int_{\bbbr}
dx\,\varphi(x) (A\varphi)(x) z(x)>0$, proving that the semigroup
$(e^{tA})_{t\geq 0}$ cannot be a contraction in
$L^2(\bbbr^d,z dx)$.
\end{remark}

\section{Independent infinite particle processes}\label{Section2}

\subsection{A probabilistic approach}\label{Subsection2.1}

\subsubsection{One particle process:}As we mentioned at the beginning of Subsection \ref{SecQuasi}, the
operator $A$ is the generator of a Markov jump process on
$\bbbr^d$. Following e.g. \cite{EtKu86}, we can explicitly
construct this process as follows. Consider the Markov chain
$(Y_k)_{k\in\bbbn_0}$ on $\bbbr^d$, with transition
density function $\mu(x,y)= \frac{a(x-y)}{\a0}$. Let $(Z_t)_{t\geq
0}$ be a Poisson process with parameter $\a0$ independent of the
Markov chain. We then define the Markov process $(X_t)_{t\geq 0}$ by
$X_t:=Y_{Z_t}$, $t\geq 0$. This process has $A$ as generator and, by
construction, it has \textit{cadlag\/} paths in $\bbbr^d$. We
denote by $D([0,\infty),\bbbr^d)$ the set of all
\textit{cadlag\/} paths from $[0,\infty)$ to $\bbbr^d$  and by
$P^x$ the path-space measure corresponding to the process $X$ starting at
$x \in \bbbr^d$. By $E_x$ we denote the expectation w.r.t.~this
measure. Hence, for $\varphi\in\mathcal{D}(\bbbr^d)$ we have that \begin{equation}
E_x[\varphi(X_t)]= e^{tA}\varphi(x).
\end{equation} 

\subsubsection{Construction:}
The process on $\Gamma$ corresponding to $L$ is the following
random evolution:  each particle evolves according to the above jump process,
independently of the other particles, cf.~Lemma \ref{nov06-eq1} below.
This independent infinite particle process was rigorously constructed in
\cite{KLR05} (see also \cite{KLR03}). In the following we present the main results therein.

We need further assumptions on the jump kernel $a$ to make this construction rigorous. It is sufficient for the construction done in \cite{KLR05}  to assume that there exist an $\alpha > 2d$ and a $C>0$ such that
\begin{equation}
\label{eqbnda} 0 \leq a(y) \leq \frac{C}{(1+|y|)^\alpha},\qquad
\text{for all } y\in \bbbr^d.
\end{equation}
Then \cite{KLR05} shows that there exists a Markov process $\left(
D([0,\infty),\Theta), (\mathbf{X}_t)_{t\geq 0},
(\mathbf{P}_\gamma)_{\gamma \in \Theta}\right)$ on the set 
\[
\Theta := \left\{ \gamma \in \Gamma \, : \, \exists m \in \bbbn \mbox{ such that }  \vert\gamma_{B(n)}\vert\leq m\,\hbox{vol}(B(n)),\quad \forall\,
n\in\bbbn \right\}. 
\]
Here $D([0,\infty),\Theta)$ denotes the set of all \textit{cadlag\/}
paths from $[0,\infty)$ to $\Theta$, the process $\mathbf{X}_t:
D([0,\infty),\Theta) \rightarrow \Theta$ is the canonical one, i.e.,
$\mathbf{X}_t(\omega):=\omega(t)$, $\omega\in D([0,\infty),\Theta)$,
and each $\mathbf{P}_\gamma$ is the path-space measure of the
process starting at a $\gamma \in \Theta$. By $\mathbf{E}_\gamma$
we denote the expectation w.r.t.~$\mathbf{P}_\gamma$.

Note, that it is impossible to let the process start from an arbitrary initial
configuration $\gamma \in \Gamma$, as follows implicitly from
the discussion before Theorem~2.2 in \cite{KLR05}.
 One is obliged to restrict the set of possible initial
configurations, in our case to $\Theta$. Hence we can start the process from configurations in $\Theta$ and initial distributions $\mu$ supported on $\Theta$.
The restriction that $\mu(\Theta)=1$ is not very severe. For example it is sufficient that for all $n$ the random variable $\vert\gamma_{B(n)}\vert$ has all exponential moments finite with respect to $\mu$ and $\vert\gamma_{B(n)}\vert/\hbox{vol}(B(n))$ has a $\mu$ integral uniformly bounded in $n$.
%\begin{eqnarray}
%\mu \left( \vert\gamma_{B(n)}\vert \geq m \hbox{vol}(B(n)), \ \forall n \right) \leq 
%\sum_{n} \exp\left( \ln\left( \int \mu(d\gamma) e^{\alpha \vert\gamma_{B(n)}\vert}\right) - m \alpha \hbox{vol}(B(n)) \right)
%\end{eqnarray}
%and $ \ln\left( \int \mu(d\gamma) e^{\alpha \vert\gamma_{B(n)}\vert}\right) \leq \alpha \int \mu(d\gamma) \vert\gamma_{B(n)}\vert \leq C \alpha \hbox{vol}(B(n))$. Hence the exponent is negative for $m$ large enough.
%In for example, \cite{KoKuKva},
%\cite{K00} we showed, one has $\mu(\Theta)=1$ for every probability measure
In particular, for any probability measure $\mu$ on $\Gamma$ whose correlation functions $k_\mu^{(n)}$,
$n\in\bbbn$, fulfill the so-called Ruelle bound, i.e., there is
a $C>0$ such that $k_\mu^{(n)}\leq C^n$ for every $n\in\bbbn$.
This holds, for instance, for Gibbs measures w.r.t.~superstable,
lower and upper regular potentials, cf.~\cite{Ru70}. Trivially, it holds for Poisson measures which have as intensity a non-negative
bounded measurable function. For a constant intensity this result
was shown using ergodicity in \cite{NZ77}.
However, finiteness of exponential moments or even finiteness of all moments are not necessary, one may assume instead decay of correlation type properties. 

\subsubsection{Starting from a configuration and transition propabilities:}
Choosing, for a fixed initial configuration $\gamma \in \Theta$, an
enumeration $\{x_n\}_{n\in\bbbn}$, the infinite particle
process can be described more explicitly. For each $n$ let us
consider an independent copy of the one-particle jump process
$( (X^{(n)}_t)_{t\geq 0}, P^{x_n} )$ introduced at the beginning
of the section. In \cite{KLR05} it is shown that if one starts in a $\gamma=\{x_n\}_{n\in\bbbn} \in \Theta$ then $\bigotimes_{n=1}^\infty P^{x_n}$-a.s.~for all times the sequence
$(X_t^{(n)})_{n\in\bbbn}$ has neither accumulation points nor two entries of the sequence coincide and hence it can be identified with the configuration $\{ X^{(n)}_t \}_{n \in
\bbbn} \in \Gamma$.  Furthermore, it has been shown that this configuration lies in $\Theta$. Hence $(\{ X^{(n)}_t \}_{n \in
\bbbn})_{t\geq 0}$ is the corresponding process on the
configuration space $\Theta$ and the path-space measure is the
``symmetrization" of $\bigotimes_{n=1}^\infty P^{x_n}$.
Moreover, the
transition probability $(\mathbf{P}_t)_{t \geq 0}$ of the process
$(\mathbf{X}_t)_{t\geq 0}$ is just the product of the one-particle
transition probabilities $e^{tA}(x-y)dy$, i.e., $\prod_{n=1}^\infty
e^{tA}(x_n-y_n) dy_n$. 

As a consequence, we can give explicit formulas for a lot of expressions. The Fourier-Laplace transform of the distribution of the processes at a fixed time, is for all non-positive
$\varphi\in\mathcal{D}(\bbbr^d)$ given by
\begin{equation}\label{eqexp}
\mathbf{E}_\gamma\left[e^{ \langle \varphi,\mathbf{X}_t \rangle }
\right] =\int_\Theta \mathbf{P}_t(\gamma,d\xi)\,e^{ \langle
\varphi,\xi \rangle } = \prod_{x \in \gamma} E_x\left[
e^{\varphi(X_t)} \right] =  \prod_{x \in \gamma} e^{tA}\varphi(x), \label{KF6a}
\end{equation}
see also~\cite{KLR05} (and also Lemma~\ref{lemapri} below for an extension to a wider class of $\varphi$),
alternatively we can use the Bogoliubov exponentials introduced in \ref{eq::defbog} for $\varphi\in\mathcal{D}(\bbbr^d)$ with
$-1< \varphi \leq 0 $,
\begin{equation}
\mathbf{E}_\gamma \left[e_B(\varphi,\mathbf{X}_t) \right] = \prod_{x
\in \gamma} E_x\left[ \varphi(X_t) +1\right] =
e_B(e^{tA}\varphi,\gamma).\label{KF6b}
\end{equation}
More generally, it follows that the Fourier-Laplace transform of the joint distribution of the processes at different times can be expressed as
\begin{eqnarray*}
\mathbf{E}_\gamma\left[e^{\langle \varphi_1,
\mathbf{X}_{t_1}\rangle}\cdots e^{\langle
\varphi_n,\mathbf{X}_{t_n}\rangle}\right]=\prod_{x \in \gamma}
E_x\left[e^{\varphi_1(X_{t_1})}\ldots e^{\varphi_n(X_{t_n})}\right],
\label{Eq3.1}
\end{eqnarray*}
for all $0\leq t_1 <\ldots < t_n $ and all non-positive
$\varphi_1,\ldots,\varphi_n\in \mathcal{D}(\bbbr^d)$, $n\geq
2$. By a monotone approximation procedure using Riemannian sums,
this relation allows us to calculate the Laplace transform of the
path-space itself measure $\mathbf{P}_\gamma$, i.e.,
\begin{equation}
\label{eqpath} \mathbf{E}_\gamma\left[ e^{-\int dt \langle
\varphi(t,\cdot), \mathbf{X}_t \rangle  } \right] = \prod_{x \in
\gamma} E_x\left[ e^{-\int dt \varphi(t, X_t)   } \right],
\end{equation}
for all non-negative continuous functions $\varphi$ from
$[0,\infty)\times \bbbr^d$ to $\bbbr$ with compact
support.

The next result gives a relation between the process $\mathbf{X}$ and the operator $L$.
\begin{lemma}
\label{nov06-eq1} For all $F \in \mathcal{F}L^0_{eb}(\Theta)$ and
all $\gamma \in \Theta$ there holds
\begin{equation}
\label{eq3.2} \mathbf{E}_\gamma \left[
F(\mathbf{X}_t)-F(\mathbf{X}_0) - \int_0^t ds LF(\mathbf{X}_s)
\right] =0.
\end{equation}
\end{lemma}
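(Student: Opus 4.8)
This statement is the zero-mean form of Dynkin's formula for the process $\mathbf{X}$, and the plan is to prove it first for the Bogoliubov exponentials $F=e_B(\varphi)$, $\varphi\in B_c(\mathbb{R}^d)$, and then to obtain the general case by linearity and a monotone class approximation. For the reduction, fix $\Lambda\in\mathcal{O}_c(\mathbb{R}^d)$: the functions $e_B(\varphi)$ with $\supp\varphi\subseteq\Lambda$ span a point-separating algebra of $\Lambda$-cylinder functions, since $e_B(\varphi)e_B(\psi)=e_B(\varphi+\psi+\varphi\psi)$ with $\varphi+\psi+\varphi\psi\in B_c(\mathbb{R}^d)$, and the $\sigma$-algebra it generates is the Borel one on $\{\gamma\in\Gamma:\gamma\subseteq\Lambda\}$. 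Moreover a finite combination $\sum_jc_je_B(\varphi_j)$ satisfies the exponential bound with rate $\log(1+\max_j\Vert\varphi_j\Vert_u)$, so a given $F\in\mathcal{F}L_{eb}^0(\Theta)$ with $|F(\gamma)|\le Ce^{c|\gamma_\Lambda|}$ can be approximated pointwise by elements of this algebra carrying a common exponential rate; this is precisely the control needed to pass to the limit in (\ref{eq3.2}) at the end.

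For $F=e_B(\varphi)$ the argument has three steps. First, the explicit product form of the transition function gives $\mathbf{E}_\gamma[e_B(\varphi,\mathbf{X}_t)]=e_B(e^{tA}\varphi,\gamma)$ for every $\gamma\in\Theta$ and every $\varphi\in B_c(\mathbb{R}^d)$ (this is (\ref{KF6b}) when $-1<\varphi\le0$, $\varphi\in\mathcal{D}(\mathbb{R}^d)$, and the same computation in general, the infinite product $\prod_{x\in\gamma}(1+(e^{tA}\varphi)(x))$ being absolutely convergent for $\gamma\in\Theta$ thanks to (\ref{eqbnda})). Second, differentiating this product in $t$ term by term, using $\frac{d}{dt}e^{tA}\varphi=Ae^{tA}\varphi$ and formula (\ref{Eq2.10}) --- whose right-hand side $\sum_{x\in\gamma}(Af)(x)\,e_B(f,\gamma\setminus x)$ is still meaningful for $\gamma\in\Theta$ when $f$ is the bounded, fast-decaying function $e^{tA}\varphi$ --- one obtains $\frac{d}{dt}e_B(e^{tA}\varphi,\gamma)=\sum_{x\in\gamma}(Ae^{tA}\varphi)(x)\,e_B(e^{tA}\varphi,\gamma\setminus x)$. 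Third, writing $\mathbf{X}_s=\{X^{(n)}_s\}_n$ with $(X^{(n)})$ the independent one-particle processes started at the points of $\gamma$, using independence, $e^{sA}A=Ae^{sA}$, and again (\ref{Eq2.10}), one computes $\mathbf{E}_\gamma[(Le_B(\varphi))(\mathbf{X}_s)]=\sum_{x\in\gamma}(Ae^{sA}\varphi)(x)\,e_B(e^{sA}\varphi,\gamma\setminus x)$. Comparing the last two displays, $\mathbf{E}_\gamma[(Le_B(\varphi))(\mathbf{X}_s)]=\frac{d}{ds}e_B(e^{sA}\varphi,\gamma)$; integrating over $s\in[0,t]$ and applying Fubini gives $\mathbf{E}_\gamma[\int_0^t(Le_B(\varphi))(\mathbf{X}_s)\,ds]=e_B(e^{tA}\varphi,\gamma)-e_B(\varphi,\gamma)=\mathbf{E}_\gamma[e_B(\varphi,\mathbf{X}_t)-e_B(\varphi,\mathbf{X}_0)]$, which is exactly (\ref{eq3.2}) for $F=e_B(\varphi)$.

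The main obstacle, carrying essentially all of the work, is to make these infinite sums, infinite products and interchanges of limits rigorous; all of it is controlled by the decay assumption (\ref{eqbnda}) and the definition of $\Theta$. Indeed, for $\gamma\in\Theta$ the local counts grow at most like $\mathrm{vol}(B(n))$, so $\sum_{x\in\gamma}(1+|x|)^{-\alpha}<\infty$ because $\alpha>2d$, which yields the absolute convergence of all the products and series above; moreover $e^{tA}$ preserves, up to an atom at the origin, the decay rate $(1+|x|)^{-\alpha}$, so these bounds remain valid after evolving. The a priori estimate (\ref{eq2.1}), applied to $F=e_B(\varphi)$, bounds $|(Le_B(\varphi))(\xi)|$ by a constant times $\big(a^{(0)}|\xi_\Lambda|+\int_\Lambda\sum_{x\in\xi}a(x-y)\,dy\big)e^{c(|\xi_\Lambda|+1)}$; and for $\gamma\in\Theta$ the count $|\mathbf{X}_s\cap\Lambda|$ is a sum of independent Bernoulli variables with summable parameters, hence has all exponential moments. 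Together these give both the joint integrability in $(\omega,s)$ needed to apply Fubini, and --- applied to $F_n-F$ with a common exponential rate --- the convergence $\mathbf{E}_\gamma[\int_0^t(LF_n)(\mathbf{X}_s)\,ds]\to\mathbf{E}_\gamma[\int_0^t(LF)(\mathbf{X}_s)\,ds]$ that closes the approximation step. Alternatively, one may avoid the approximation altogether by invoking from the construction in \cite{KLR05} that $\mathbf{P}_\gamma$-a.s.\ only finitely many particles visit $\Lambda$ on $[0,t]$, reducing (\ref{eq3.2}) to the classical Dynkin formula for a pure jump process with finitely many particles; the estimates required are essentially the same.
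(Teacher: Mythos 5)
Your proposal is correct and follows essentially the same route as the paper: establish (\ref{eq3.2}) for Bogoliubov exponentials by differentiating $e_B(e^{tA}\varphi,\gamma)$ and matching it with $\mathbf{E}_\gamma[Le_B(\varphi)(\mathbf{X}_t)]$ via the product structure of the path-space measure and (\ref{Eq2.10}), then control everything with the bound (\ref{eq2.1}) and the integrability supplied by Lemma~\ref{lemapri}. The only difference is that you spell out the monotone-class approximation extending the identity from the span of the $e_B(\varphi)$ to all of $\mathcal{F}L^0_{eb}(\Theta)$, a step the paper leaves implicit behind the same dominating estimates.
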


\begin{proof}
For all $F$ of the form $e_B(\varphi)$ with $\varphi \in
\mathcal{D}(\bbbr^d)$ one has
\begin{eqnarray*}
\frac{d}{dt}\mathbf{E}_\gamma\left[F(\mathbf{X}_t)\right] =
\frac{d}{dt}e_B(e^{tA}\varphi,\gamma)= \sum_{x \in \gamma}
Ae^{tA}\varphi(x) e_B(e^{tA}\varphi,\gamma \setminus x).
\end{eqnarray*}
Hence using the product structure of the path-space measure and
(\ref{Eq2.10}) one finds
\begin{eqnarray*}
 \sum_{x \in \gamma}
Ae^{tA}\varphi(x) e_B(e^{tA}\varphi,\gamma \setminus x)=
\mathbf{E}_\gamma \left[\sum_{x \in \mathbf{X}_t} A\varphi(x)
e_B(\varphi,\mathbf{X}_t\!\! \setminus x)\right]  =
\mathbf{E}_\gamma \left[Le_B(\varphi)(\mathbf{X}_t) \right].
\end{eqnarray*}

In order to make the previous calculations rigorous and to extend
them to the whole space $\mathcal{F}L^0_{eb}(\Theta)$, we
have to establish bounds for the considered expressions. According
to (\ref{eq2.1}), for all $F\in \mathcal{F}L^0_{eb}(\Theta)$, one
may bound the integrand $LF(\mathbf{X}_s)$ in (\ref{eq3.2}) by
\[
\left[ |\mathbf{X}_s\cap\Lambda| \int_{\bbbr^d}dy\,a(y) +
\int_\Lambda dy\,\sum_{x \in \mathbf{X}_s}  a(x-y) \right] 2 C e^{c
(\vert\mathbf{X}_s\cap\Lambda\vert +1)}.
\]
For some constants $C',C''>0$ one may bound the previous
expression by $C'' e^{\langle \chi, \mathbf{X_s} \rangle }$, where
$\chi(y) = \frac{C'}{(1+|y|)^{\alpha/2}}$ with $\alpha$ as in
(\ref{eqbnda}). The first
summand in (\ref{eq3.2}) and the expression $\sum_{x \in
\mathbf{X}_t} |A\varphi(x)| e_B(|\varphi|,\mathbf{X}_t \setminus x)$ can
be bounded analogously. Due to Lemma~\ref{lemapri} below the
expectation of these bounds are finite for all $\gamma \in \Theta$.
\end{proof}

\begin{lemma}
\label{lemapri} Let $\chi(y) = \frac{C'}{(1+|y|)^{\alpha/2}}$, for some $C'>0$ and $\alpha>0$ as in (\ref{eqbnda}). Then
there exists a $c>0$ such that $A\chi \leq c \chi$ and $e^{tA}\chi
\leq e^{ct} \chi$ for all $t\geq 0$. Moreover, for all $\gamma \in
\Theta$ and all measurable functions $\varphi$ such that $\frac{|\varphi|}{
\chi}$ is bounded, one has that the product $\prod_{x \in \gamma}(1+\varphi(x))$
is absolutely convergent and
\[
\mathbf{E}_\gamma \left[ e^{\langle\varphi, \mathbf{X}_t\rangle}
\right] < \infty.
\]
\end{lemma}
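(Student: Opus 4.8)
The plan is to establish first the pointwise ``quasi-eigenfunction'' bound $A\chi\le c\chi$, then upgrade it to $e^{tA}\chi\le e^{ct}\chi$ by a positivity/Gronwall argument, and finally deduce the two probabilistic claims from the summability of $\chi$ along configurations in $\Theta$ together with the product structure of $\mathbf P_\gamma$. For the first step I would use (\ref{KF1}) to write $A\chi=a*\chi-\a0\chi\le a*\chi$, so it suffices to dominate $(a*\chi)(x)=\int_{\mathbb R^d}a(x-y)\chi(y)\,dy$ by a constant multiple of $\chi(x)$, uniformly in $x$. I would split the integral at $|y|=|x|/2$: on $\{|y|\le|x|/2\}$ one has $|x-y|\ge|x|/2$, hence $1+|x-y|\ge\frac12(1+|x|)$, so by (\ref{eqbnda}) $a(x-y)\le C2^\alpha(1+|x|)^{-\alpha}\le C2^\alpha(1+|x|)^{-\alpha/2}$; since $\alpha>2d$ the function $\chi$ is integrable, so this part is bounded by a constant times $(1+|x|)^{-\alpha/2}$, i.e.\ a constant times $\chi(x)$. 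On $\{|y|>|x|/2\}$ one has $1+|y|\ge\frac12(1+|x|)$, so $\chi(y)\le2^{\alpha/2}\chi(x)$ and this part is at most $2^{\alpha/2}\chi(x)\int_{\mathbb R^d}a(z)\,dz=2^{\alpha/2}\a0\chi(x)$. Adding the two contributions gives $A\chi\le c\chi$ for a suitable $c>0$.

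For the semigroup bound, recall that since $a\ge0$ convolution with $a$ is positivity preserving, hence so is $e^{tA}$ for every $t\ge0$ (cf.\ (\ref{KF4}) and Proposition~\ref{PropSemiA}). Fixing $x$ and setting $h(t):=(e^{tA}\chi)(x)$, boundedness of $A$ gives $\frac{d}{dt}e^{tA}\chi=e^{tA}A\chi$, and by the previous step together with positivity preservation $e^{tA}A\chi\le c\,e^{tA}\chi$; evaluating at $x$ yields $h'(t)\le c\,h(t)$, so Gronwall's inequality gives $h(t)\le e^{ct}h(0)$, that is, $e^{tA}\chi\le e^{ct}\chi$.

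For the two probabilistic statements, let $\gamma\in\Theta$ and choose $m\in\mathbb N$ with $|\gamma_{B(n)}|\le m\,\mathrm{vol}(B(n))$ for all $n$. Decomposing $\gamma$ into $\gamma_{B(1)}$ and the dyadic shells $\gamma\cap(B(2^{k+1})\setminus B(2^k))$, $k\ge0$: on the $k$-th shell $\chi\le C'2^{-k\alpha/2}$, while the number of points there is at most $m\,\mathrm{vol}(B(2^{k+1}))$, a constant times $2^{(k+1)d}$, so the shell contributes at most a constant times $2^{k(d-\alpha/2)}$ to $\langle\chi,\gamma\rangle$; since $\alpha>2d$ the series over $k$ converges, and the $\gamma_{B(1)}$-part is a finite sum, so $\langle\chi,\gamma\rangle<\infty$. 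With $K$ a bound for $|\varphi|/\chi$ this gives $\sum_{x\in\gamma}|\varphi(x)|\le K\langle\chi,\gamma\rangle<\infty$, which is precisely the absolute convergence of $\prod_{x\in\gamma}(1+\varphi(x))$. For the expectation I would bound $e^{\langle\varphi,\mathbf X_t\rangle}\le e^{K\langle\chi,\mathbf X_t\rangle}$ and use the product structure of the path-space measure ($\mathbf P_\gamma$ being, up to symmetrization, $\bigotimes_{x\in\gamma}P^x$, so by Tonelli and monotone convergence over finite sub-products) to get $\mathbf E_\gamma[e^{K\langle\chi,\mathbf X_t\rangle}]=\prod_{x\in\gamma}E_x[e^{K\chi(X_t)}]$ in $[0,\infty]$. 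Since $0\le K\chi(X_t)\le s_0:=K\Vert\chi\Vert_u$ and $e^s\le1+se^{s_0}$ for $0\le s\le s_0$, and since $E_x[\chi(X_t)]=(e^{tA}\chi)(x)\le e^{ct}\chi(x)$ by the previous steps, each factor is at most $1+Ke^{s_0}e^{ct}\chi(x)$, and the product converges because $\langle\chi,\gamma\rangle<\infty$; hence $\mathbf E_\gamma[e^{\langle\varphi,\mathbf X_t\rangle}]<\infty$.

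I expect the real work to be the region-splitting estimate of the first step: although elementary, it is the one place where the tail hypothesis $\alpha>2d$ is used decisively — it guarantees both the integrability of $\chi$ and the convergence of the dyadic sum for $\langle\chi,\gamma\rangle<\infty$ — and one must check that the constant $c$ it produces is genuinely uniform in $x$. A secondary point requiring care is the passage, in the last step, from the identities recorded only for non-positive $\varphi\in\mathcal D(\mathbb R^d)$ (such as (\ref{KF6a})) to the non-negative, non-compactly-supported function $\chi$, which is handled by the product structure of $\mathbf P_\gamma$ and monotone convergence.
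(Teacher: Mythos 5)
Your proof is correct, and its overall architecture coincides with the paper's: first the pointwise bound $A\chi\le c\chi$, then Gr\"onwall for $e^{tA}\chi\le e^{ct}\chi$, then $\langle\chi,\gamma\rangle<\infty$ on $\Theta$, and finally the product structure of $\mathbf P_\gamma$. The one genuinely different ingredient is the convolution estimate. The paper gets $A\chi\le c\chi$ in a single line from the Peetre-type inequality $1+|x|\le(1+|x-y|)(1+|y|)$, which gives $\chi(x-y)\le(1+|y|)^{\alpha/2}\chi(x)$ and hence $a*\chi\le\bigl(\int(1+|y|)^{\alpha/2}a(y)\,dy\bigr)\chi$, the moment $\int(1+|y|)^{\alpha/2}a(y)\,dy$ being finite by (\ref{eqbnda}) since $\alpha-\alpha/2>d$. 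You instead split the convolution at $|y|=|x|/2$ and use $\chi\in L^1$ together with the pointwise decay of $a$; both routes consume the hypothesis $\alpha>2d$ in equivalent ways, the paper's being shorter and yours being slightly more robust (it only needs the tail bound on $a$, not a clean multiplicative inequality for $\chi$). Two places where you actually supply detail the paper omits: the dyadic-shell computation showing $\langle\chi,\gamma\rangle<\infty$ from the density bound defining $\Theta$ (the paper merely asserts this), and the explicit justification, via Tonelli and monotone limits over finite sub-products, of the factorization $\mathbf E_\gamma[e^{K\langle\chi,\mathbf X_t\rangle}]=\prod_x E_x[e^{K\chi(X_t)}]$ for the non-negative, non-compactly-supported $\chi$, where the paper silently extends (\ref{KF6a}). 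Your final bound $E_x[e^{K\chi(X_t)}]\le 1+Ke^{s_0}e^{ct}\chi(x)$ is the same in substance as the paper's observation that $e^{tA}(e^{\chi}-1)\le e^{\Vert\chi\Vert_u}e^{ct}\chi$, so the product converges for the same reason in both arguments.
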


\begin{proof}
The previous considerations yield
\[
\mathbf{E}_\gamma \left[ e^{\langle\varphi, \mathbf{X}_t\rangle}
\right]\leq
\mathbf{E}_\gamma \left[ e^{\langle\chi, \mathbf{X}_t\rangle}
\right] = \prod_{x \in \gamma} E_x\left[ e^{\chi(X_t)}\right] =
\prod_{x \in \gamma} e^{tA}e^\chi (x).
\]
Thus the proof amounts to show the convergence of the latter
infinite product. For this purpose, we note that
\[
\chi(x-y)= \frac{C^\prime}{(1+|x-y|)^{\alpha/2}}\leq
\frac{C^\prime(1+|y|)^{\alpha/2}}{(1+|x|)^{\alpha/2}}=  (1+|y|)^{\alpha/2} \chi(x).
\]
This gives for the one-particle operator
\[
A\chi (x) = \int_{\bbbr^d} dy a(y) \chi(x-y) - a^{(0)}\chi(x) \leq  \Big(\int_{\bbbr^d} dy (1+|y|)^{\alpha/2} a(y)\Big)
\chi(x) - a^{(0)}\chi(x).
\]
Due to the bound (\ref{eqbnda}) for $a$,  the integral $\int_{\bbbr^d} dy (1+|y|)^{\alpha/2} a(y)$
is finite.
By Gr\"onwall's lemma this yields $E_x[\chi(X_t)] = e^{tA}\chi(x) \leq \chi(x)e^{ct}$,
where $c=  \int_{\bbbr^d} dy (1+|y|)^{\alpha/2} a(y) -
a^{(0)}$. Therefore, also $|e^{tA}(e^{\chi}-1)|$ decays as
$(1+|y|)^{-\alpha/2}$. According to the definition of $\Theta$, one
can prove that $\langle \chi, \gamma \rangle < \infty $ for all
$\gamma \in \Theta$. Hence also the product $\prod_{x \in \gamma}
e^{tA}e^\chi (x)$ converges absolutely and is finite.
 \end{proof}

\subsubsection{Starting from a distribution:} In Sections~\ref{led} and \ref{nonled} one considers the
one-dimensional in time distributions  of processes starting with initial
distributions $\mu$ which are probability measures on $\Theta$. The
path-space measure $\mathbf{P}^\mu$ corresponding to such a process  is
given by $\int_\Theta \mathbf{P}_\gamma \mu(d\gamma)$. Its
one-dimensional distribution is a probability measure
$P^{\mathbf{X}}_{\mu,t}$ on $\Theta$ defined for all non-negative
measurable functions $F$ by
\begin{equation}
\label{eqpromu} \int_\Theta P^{\mathbf{X}}_{\mu,t}(d\gamma) F(\gamma)
 := \int_{\Theta} \mu(d\gamma)
\mathbf{E}_\gamma\left[F(\mathbf{X}_t) \right] .
\end{equation}
In particular, for $\mu=\delta_\gamma$, $\gamma \in \Theta$, the
one-dimensional distribution coincides with the transition kernel
$\mathbf{P}_t(\gamma,\cdot)$ described above.

For functions $F$ being Bogoliubov exponentials, definition
(\ref{eqpromu}) leads to the so-called Bogoliubov functionals
\cite{Bo62}. By definition, the Bogoliubov functional
corresponding to a probability measure $\mu$ on $\Gamma$ is defined
by
\[
B_{\mu}(\varphi):= \int_\Gamma e_B(\varphi,\gamma) \mu(d\gamma),
\qquad \varphi \in \mathcal{D}(\bbbr^d).
\]
Such a functional is an analogue of the Fourier-Laplace transform on
configuration spaces, cf.~\cite{Ku03}.
 Due to (\ref{KF6b}) there is an interesting
relation between the Bogoliubov functional corresponding to the
initial distribution and the Bogoliubov functional corresponding to
the one-dimensional distribution of the process at a time $t>0$,
namely,
\begin{equation}
\label{onebog} \int_\Theta  P^{\mathbf{X}}_{\mu,t}(d\gamma) e_B(\varphi,\gamma)
= \int_\Theta \mu(d\gamma)
e_B(e^{tA}\varphi,\gamma)  = B_{\mu}(e^{tA} \varphi).
\end{equation}

In particular, for $\mu=\pi_z$ for some bounded intensity function
$z\geq 0$ one finds
\begin{eqnarray}
\int_\Theta e_B(\varphi,\gamma) P^{\mathbf{X}}_{\pi_z,t}(d\gamma) =
\int_\Theta e_B(e^{tA}\varphi,\gamma) \pi_z(d\gamma) =\exp \left(
\int_{\bbbr^d} e^{tA}\varphi(x) z(x) dx \right) \label{KF31}
\end{eqnarray}
for all $\varphi\in\mathcal{D}(\bbbr^d)$. In Section \ref{led}
we shall consider this special case in more detail.

One may ask why one does not work with the more standard Fourier-Laplace transform. Reconsidering (\ref{eqexp}) one sees that at least formally
\begin{equation}
\mathbf{E}_\gamma\left[e^{ \langle \varphi,\mathbf{X}_t \rangle }
\right]  =e^{ \langle \ln(e^{tA} e^\varphi) , \gamma \rangle }
\end{equation}
and notes that the pseudo-one-particle dynamic obtained in this way $\varphi \mapsto \ln(e^{tA} e^\varphi)$  is non-linear.

%
%\bigskip
%
%
%The action of the transition kernel is related  with the operator
%$L$ as follows.
%\begin{lemma}\label{LemGen}
%For each fixed $\gamma \in \Theta $ the derivative of the one dimensional
%distribution is
%\[
%\frac{d}{dt} P_\gamma(t,e_B(f)) = \frac{d}{dt} e_B(e^{tA}f, \gamma)
%= \left(Le_B(e^{tA}f)\right)(\gamma)
%\]
%for any bounded measurable function $f$ with compact
%support. Even more, one can see that the limit is
%uniformly dominated by $e_B(|f|+|Af|)$. If the initial
%distribution $\mu$ has finite exponential moments,
%the finite dimensional distributions are differentiable
%in each time and one can interchange the time derivatives
%with the integrals.
%\end{lemma}

\subsubsection{Pseudo-one-particle operator: } We call in this paper $A$ the pseudo-one-particle operator. This is motivated by (\ref{onebog}) which gives us the possibility to study time development of the infinite particle system in terms of the semi-group $e^{tA}$. Indeed, the Bogoliubov exponentials, like the usual exponentials, form a class of function large enough to describe the dynamics uniquely. The difference with the dynamics of a system of one particle is, beside the exponential in  (\ref{onebog}), the interpretation of $z$. In the one-particle system, $z$ would describe the initial distribution of the particle and hence given by an $L^1(\bbbr^d,dx)$ function. In the infinite particle system $z$ describes the intensity of the Poisson measure, that is the density of particles and hence it is natural to consider $ z \in L^\infty(\bbbr^d,dx)$. The difference is in which space we consider the semi-group to act, and we will see in Section~\ref{led} and in Section~\ref{nonled} that the qualitative behaviour is quite different. Note that for a Poisson measure on the configuration space the number of particles is a.s. finite if and only if $z \in L^1(\bbbr^d,dx)$. 
%When one considers particles which have to stay in a bounded subset $\Lambda \subset \bbbr^d$, one can repeat the entire construction,  but all objects would be restricted to $\Lambda$. In  (\ref{onebog}) this means that one considers  bounded $z$ which are zero outside of $\Lambda$ and hence $z \in L^1(\bbbr^d,zdx)$ also in this case.

\subsection{An analytic approach}\label{Subsection2.2}

Within the framework of infinite dimensional analysis on
configuration spaces \cite{KoKu99} one may derive alternative
representations and constructions of the dynamics. Instead of
describing the infinite particle dynamics on $\Gamma$ through the
Kolmogorov equation $\frac{\partial}{\partial t}F_t=LF_t$, the
so-called $K$-transform \cite{Le75a,Le75b} allows an alternative
description for the action of $L$ on
$\mathcal{F}L_{eb}^0(\Gamma_a)$.

Given the space
$\Gamma_0:=\{\gamma\in\Gamma:\vert\gamma\vert<\infty\}$ of finite
configurations endowed with the metrizable topology as described in
\cite{KoKu99}, let $B_{exp,ls}(\Gamma_0)$ be the space of all
exponentially bounded Borel measurable functions $G$ (i.e.,
$|G(\eta)|\leq C_1 e^{C_2 |\eta|}$ for some $C_1,C_2 > 0$) with
local support (i.e., there is a $\Lambda \in
\mathcal{O}_c(\bbbr^d)$ such that 
$G(\eta) =0$ for all $\eta\in\Gamma_0$ with $\vert\eta\cap(\bbbr^d\setminus\Lambda)\vert\not= 0$). 
The $K$-transform of a $G\in B_{exp,ls}(\Gamma_0)$
is the mapping $KG:\Gamma\to\bbbr$ defined for all
$\gamma\in\Gamma$ by $(KG)(\gamma ):=\sum_{{\eta \subset
\gamma}\atop{\vert\eta\vert < \infty} } G(\eta )$.  Note that
$K\left(B_{exp,ls}(\Gamma_0)\right)\subset\mathcal{F}L_{eb}^0(\Gamma_a)$.
Given a $G\in B_{exp,ls}(\Gamma_0)$, then in terms of the operator
$L$ we obtain
\begin{eqnarray*}
\left(L(KG)\right)(\gamma)
%&=& \sum_{x \in \gamma}
%\int_{\bbbr^d} dy\,a(x-y)
%\left((KG)(\gamma\setminus x\cup y)- (KG)(\gamma)\right)\\
&=& \sum_{x \in \gamma} \int_{\bbbr^d \setminus (\gamma\setminus
x)}\!\!dy\,a(x-y)
\left[(KG)(\gamma\setminus x) + (KG (\cdot\cup y))(\gamma\setminus x)\right.\\
&&\left. -(KG)(\gamma\setminus x)-(KG (\cdot\cup x))(\gamma\setminus x)\right]
%\\
%&=& K\left(\sum_{x\in\cdot}\int_{\bbbr^d}dy\,a(x-y) \left(
%G(\cdot\setminus x\cup y )- G\right)\right)(\gamma),\quad
%\gamma\in\Gamma_a,
\end{eqnarray*}
which leads to the so-called symbol acting on quasi-observables
$\hat L$,
\[
(\hat{L}G)(\eta) := \sum_{x\in\eta} \int_{\bbbr^d} dy\,a(x-y)
\left(  G(\eta\setminus x \cup y )- G(\eta) \right),\quad G\in
B_{exp,ls}(\Gamma_0),
\]
%Taking into account the dual relation between measures and functions
%on $\Gamma$, this means that the distribution $\mu_t$ of the
%particle system at each time $t$ is given by the dual equation
%\begin{equation}
%\frac d{dt}\mu_t=L^*\mu_t,\label{DKE}
%\end{equation}
%$L^*$ being the dual operator of $L$.
and the corresponding time evolution equation $\frac
{\partial}{\partial t}G_t=\hat LG_t$. The transition kernel
$\hat{P}_t$ corresponding to $\hat L$ is then for $\eta = \{ x_1 , \ldots , x_n\}$ given by
\[
\int_{\Gamma_0} \hat{P}_t(\eta',d\eta)\,G(\eta)=
\int_{\bbbr^{dn}}  dy_1 \ldots dy_n\,
 \prod_{i=1}^n e^{tA}(x_i-y_i)  G(\{y_1,\ldots,y_n\}).
\]
This allows us to extend the explicit formula for transition kernels
of $\mathbf{X}$ to the class of all so-called observables of
additive type
\[
\int_\Theta \mathbf{P}_t(\gamma,d\xi)\,(KG)(\xi)=
K\left(\int_{\Gamma_0}
\hat{P}_t(\cdot,d\eta)\,G(\eta)\right)(\gamma).
\]

 By duality one may extend the dynamical description
to correlation functions. For this purpose, on the space
$\Gamma_0=\cup_{n=0}^\infty \{\gamma\in\Gamma:\vert\gamma\vert =
n\}$ let us consider the so-called Lebesgue-Poisson measure
$\lambda_z:=\sum_{n=0}^\infty \frac{1}{n!} (zm)^{(n)}$, where $m$ denotes the Lebesgue measure and each
$(zm)^{(n)}$, $n\in\bbbn$, is the image measure on
$\{\gamma\in\Gamma:\vert\gamma\vert = n\}$ of the product measure
$z(x_1)dx_1\cdots z(x_n)dx_n$ under the mapping
${(\bbbr^d)^n}\ni(x_1,...,x_n)\mapsto\{x_1,...,x_n\}$.
%,
%$\widetilde{(\bbbr^d)^n}:= \{(x_1,...,x_n)\in\bbbr^{dn}:
%x_i\not= x_j \hbox{ if } i\not= j\}$. For $n=0$ we set
%$m^{(0)}(\{\emptyset\}):=1$.
If for some probability measure $\mu$ on $\Gamma$ there is a
function $k_\mu$ on $\Gamma_0$ such that the equality
$\int_\Gamma\mu(d\gamma)\,(KG)(\gamma)=
\int_{\Gamma_0}\lambda(d\eta)\,G(\eta)k_\mu(\eta)$ holds for all
$G\in B_{exp,ls}(\Gamma_0)$. Then  $k_\mu$ are the correlation function
corresponding to $\mu$. Here we abbreviate $\lambda:= \lambda_1$.
Denoting by $\hat L^*$ the dual operator of $\hat L$ in the sense
\[
\int_{\Gamma_0}\lambda_z(d\eta)\,(\hat LG)(\eta) k(\eta)=
\int_{\Gamma_0}\lambda_z(d\eta)\,G(\eta) (\hat L^*k)(\eta),
\]
one obtains the following expression, cf \cite{FKO09} for more details,
\[
(\hat L^*k)(\eta)=\sum_{x\in\eta}\int_{\bbbr^d}dy\,a(y-x)
k(\eta\setminus x\cup y)-|\eta| k(\eta)a^{(0)}.
\]
The corresponding time evolution equation for correlation functions,
$\frac {\partial}{\partial t}k_t=\hat L^*k_t$, is the analogue of the
BBGKY-hierarchy  for the case of the free Kawasaki dynamics. In our case
this equation can be explicitly solved, namely,
\begin{equation}\label{eq::dyncorr}
k_t(\{x_1,\cdots,x_n\})=\int_{\bbbr^{dn}}dy_1\cdots dy_n\,
k_\mu(\{y_1,\cdots,y_n\})\prod_{i=1}^ne^{tA}(y_i-x_i)\label{KS}
\end{equation}
for the initial condition $k_\mu$.
% The verification is based on the fact
%that
%\[
%\int_{\bbbr^d}dy\,\varphi(y)e^{tA}(y-x)=(e^{tA^*}\varphi)(x),
%\]
%where $A^*$ is the adjoint operator of $A$ described in Remark
%\ref{KFRem1}. Hence one finds for the time derivative of the
%right-hand side of (\ref{KS})
%\begin{eqnarray*}
%&&\int_{\bbbr^{dn}}dy_1\cdots dy_n\,
%k_\mu(\{y_1,\cdots,y_n\})\sum_{i=1}^n\frac{\partial}{\partial t}
%e^{tA}(y_i-x_i)\prod_{j\not=i}^ne^{tA}(y_j-x_j)\\
%&=&\sum_{i=1}^n\int_{\bbbr^{d(n-1)}}dy_1\cdots d\hat y_i\cdots
%dy_n\,
%\prod_{j\not=i}^ne^{tA}(y_j-x_j)\\
%&&\quad\int_{\bbbr^d}dy_i\,k_\mu(\{y_1,\cdots,y_n\})
%\frac{\partial}{\partial t}e^{tA}(y_i-x_i)\\
%&=&\sum_{i=1}^n\int_{\bbbr^{d(n-1)}}dy_1\cdots d\hat y_i\cdots
%dy_n\,
%\prod_{j\not=i}^ne^{tA}(y_j-x_j)\\
%&&\quad\int_{\bbbr^d}dy_i\,k_\mu(\{y_1,\cdots,y_n\})
%(A^*e^{tA}(y_i-\cdot))(x_i).
%\end{eqnarray*}
%An application of the explicit form for $A^*$ then yields
%\begin{eqnarray*}
%&&\sum_{i=1}^n\int_{\bbbr^d}dz_i\,a(z_i-x_i)
%\int_{\bbbr^{dn}}dy_1\cdots dy_n\,k_\mu(\{y_1,\cdots,y_n\})
%e^{tA}(y_i-z_i)\prod_{j\not=i}^ne^{tA}(y_j-x_j)\\
%&&-a^{(0)}\sum_{i=1}^n\int_{\bbbr^{dn}}dy_1\cdots dy_n\,
%k_\mu(\{y_1,\cdots,y_n\})\prod_{i=1}^ne^{tA}(y_i-x_i)\\
%&=&\sum_{i=1}^n\int_{\bbbr^d}dz_i\,a(z_i-x_i)k_t(\{x_1,\cdots ,\hat x_i,\cdots,x_n\}\cup z_i)-na^{(0)}k_t(\{x_1,\cdots,x_n\})\\
%&=&(\hat L^*k_t)(\{x_1,\cdots,x_n\}).
%\end{eqnarray*}
Let us note that if one assumes a Ruelle bound for the initial
correlation function $k_\mu$, then all the above considerations can
be made rigorous. Indeed, $e^{tA}$ is a contraction in $L^\infty(\bbbr^d , dx)$ and hence by (\ref{eq::dyncorr}) the correlation functions retain the Ruelle bound with time. In order to show that these correlation functions correspond to a probability measure $\mu_t$ on
$\Theta \subset \Gamma$ one can either use similar arguments as in
\cite{KoKtZh06} or consider it as a moment problem on configuration space and try to verify the criteria in \cite{ik20} for a version of the infinite dimensional moment problem giving a localization on the configuration space based on techniques established in \cite{ikr14} and guarantee uniqueness using Carleman bounds, see e.g. \cite{In16}.

According to the previous considerations, the time evolution of the
particle system may also be described in terms of Bogoliubov
functionals, cf.~\cite{KoKuOl02}, through the
%\[
%B_{\mu,t}(\varphi) :=\int_\Gamma
%\mu_t(d\gamma)\,e_B(\varphi,\gamma), \quad
%\varphi\in\mathcal{D}(\bbbr^d).
%\]
%Due to the Kolmogorov equation we have
%\[
%\frac{\partial}{\partial t}B_{\mu,t}(\varphi)= \int_\Gamma
%\mu_t(d\gamma)\,(Le_B(\varphi))(\gamma),
%\]
%which yields the
time evolution equation
\[
\frac{\partial}{\partial t} B_{\mu,t}(\varphi)=
\int_{\bbbr^d}dx\int_{\bbbr^d}dy\,a(x-y)(\varphi(y)-\varphi(x))
\frac{\delta B_{\mu,t}(\varphi)}{\delta \varphi(x)},\quad
\varphi\in\mathcal{D}(\bbbr^d).
\]
Here $\frac{\delta B_{\mu,t}(\varphi)}{\delta \varphi(x)}$ denotes
the first variational derivative of $B_{\mu,t}$ at $\varphi$.
Actually, $B_{\mu,t}(\varphi)$ is the Bogoliubov functional
corresponding to the one-dimensional distribution of the process
starting in $\mu$. Hence, the previous equation has an explicit solution
given by (\ref{onebog}), i.e.,
$B_{\mu,t}(\varphi)=B_{\mu}(e^{tA}\varphi)$.
If one develops the Bogoliubov functional in powers of $\varphi$ then one recover the equations for the correlation functions derived above.

\section{Equilibrium dynamics}\label{seqeq}

%\subsection{Fock space representation \label{SecFock}}

In this section we are interested in the representation of the
generator $L$ and its semigroup in terms of the creation,
annihilation and second quantization operators as analytic
expressions. These representations are possible because there is a
well-known canonical unitary isomorphism between the (symmetric)
Fock space and the  Poisson space. The operator theoretic side can be only made rigorous for constant activity $z$. When $a$ is symmetric then $\pi_z$ for constant activity is a reversible measure, which gave rise to the name of the section.
We start by recalling the aforementioned canonical unitary 
isomorphism. Our approach is based on \cite{AKR97} and
\cite{KSSU97}, but see also \cite{KoKuOl00} and references therein.

\subsection{The Fock space representation}
\label{SecFock}

Let $z$ be a non-negative locally integrable function, which we denote by $z(\cdot)$ in order to distinguish it from the case of constant intensity. We will explain in this sub-section why in order to use the Fock space representation to establish existence on has to assume that $z$ is constant. 

We consider the complex Hilbert
space $L^{2}(\pi_{z(\cdot)}):=L^{2}(\Gamma,\mathcal{B}(\Gamma),\pi_{z(\cdot)})$ of
square integrable complex valued functions on $\Gamma$ with respect
to the Poisson measure $\pi_{z(\cdot)}$. The coherent states introduced in
Remark~\ref{Natal11} %defined for all $f\in B_{c}(\bbbr^{d})$
%by
%\begin{equation}
%e_{\pi_{z(\cdot)}}(f,\gamma):=\prod_{x\in\gamma}(1+f(x))e^{-\langle
%f\rangle_{z(\cdot)}},\label{mar06-eq8}\end{equation} where $\langle
%f\rangle_{z(\cdot)}:=\int_{\bbbr^{d}}dxf(x)z(x)$. In addition, if we
%assume that $f>-1$, then we may express $e_{\pi_{z(\cdot)}}(f,\cdot)$ as\[
%e_{\pi_{z(\cdot)}}(f,\gamma)=e^{\langle\log(1+f),\gamma\rangle-\langle
%f\rangle_{z(\cdot)}}.\]
% These functions
generate the system of so-called Charlier polynomials, namely,
\[
e_{\pi_{z(\cdot)}}(\varphi,\gamma)=\exp\left(-\int_{\bbbr^d}dx\,z(x)f(x)\right)e_B(f)=\sum_{n=0}^{\infty}\frac{1}{n!}\langle
C_{z(\cdot)}^{n}(\gamma),\varphi^{\otimes n}\rangle,
\]
where $C_{z(\cdot)}^{n}(\gamma)\in(\mathcal{D}')^{\hat{\otimes}n}$ and $(\mathcal{D}')^{\hat{\otimes}n}$ is the $n$-th symmetric
tensor product of the Schwartz distributions space
$\mathcal{D}':= \mathcal{D}'(\bbbr^d)$. This system
%\[ \{\langle
%C_{z(\cdot)}^{n}(\gamma),\varphi^{(n)}\rangle,\quad\varphi^{(n)},\;
%n\in\bbbn_{0}\}\]
% of Charlier polynomials
is orthogonal and
 % and for any $\varphi^{(n)}\in\mathcal{D}^{\hat{\otimes}n}$
%and $ $ $\psi^{(m)}\in\mathcal{D}^{\hat{\otimes}m}$ we have \[
%\int_{\Gamma}d\pi_{z(\cdot)}\langle
%C_{z(\cdot)}^{n}(\gamma),\varphi^{(n)}\rangle\langle
%C_{z(\cdot)}^{m}(\gamma),\psi^{(m)}\rangle=\delta_{nm}n!(\varphi^{(n)},\psi^{(n)})_{L^{2}(z(\cdot)dx)^{\otimes
%n}}.\]
%This give us the possibility to extend (in the
%$L^{2}(\pi_{z(\cdot)})$ sense ) the class of $\langle
%C_{z(\cdot)}^{n}(\gamma),\varphi^{(n)}\rangle$-functions to include kernels
%from $L^{2}(z(\cdot)dx)^{\hat{\otimes}n}$.
 any $F\in L^{2}(\pi_{z(\cdot)})$ can be expanded in
terms of Charlier polynomials
\begin{equation}
F(\gamma)=\sum_{n=0}^{\infty}\langle
C_{z(\cdot)}^{n}(\gamma),f^{(n)}\rangle,\qquad f^{(n)}\in
L^{2}(z(\cdot)dx)^{\hat{\otimes}n}\label{mar06-eq6},
\end{equation}
% and we have \[
%\left\Vert F\right\Vert
%_{L^{2}(\pi_{z(\cdot)})}^{2}=\sum_{n=0}^{\infty}n!\left\Vert
%f^{(n)}\right\Vert _{L^{2}(z(\cdot)dx)^{\hat{\otimes}n}}^{2}<\infty.\]
where $L^{2}(z(\cdot)dx) := L^{2}(\bbbr^d, z(\cdot)dx)$.
This yields a unitary isomorphism $I_{\pi_{z(\cdot)}}$ between
$L^{2}(\pi_{z(\cdot)})$ and the so-called symmetric Fock space
\[
\mathcal{F}(L^{2}(z(\cdot)dx)):=\bigoplus_{n=0}^{\infty}n!L^{2}(z(\cdot)dx)^{\hat{\otimes}n},\qquad
L^{2}(z(\cdot)dx)^{\hat{\otimes}0}:=\bbbc.
\]
%
%\longrightarrow\mathcal{F}(L^{2}(z(\cdot)dx)) The symmetric Fock space over
%the Hilbert space $L^{2}(z(\cdot)dx)$ is a well studied space in infinite
%dimensional analysis, see e.g.~\cite{RS75}, \cite{BK88} and
%\cite{HKPS93}. It is defined by\[
%\mathcal{F}(L^{2}(z(\cdot)dx)):=\bigoplus_{n=0}^{\infty}n!L^{2}(z(\cdot)dx)^{\hat{\otimes}n},\qquad
%L^{2}(z(\cdot)dx)^{\hat{\otimes}0}:=\bbbc\] and the square norm of a
%vector $\vec{f}=(f^{(n)})_{n=0}^{\infty}$ is given by
%$\sum_{n=0}^{\infty}n!\left\Vert f^{(n)}\right\Vert
%_{L^{2}(z(\cdot)dx)^{\otimes n}}^{2}$. It is well-known that both spaces
%$L^{2}(\pi_{z(\cdot)})$ and $\mathcal{F}(L^{2}(z(\cdot)dx))$ are unitary
%isomorphic, see for example \cite{IK88}.
More precisely, for each $F\in L^{2}(\pi_{z(\cdot)})$ of the form
(\ref{mar06-eq6}) one has $I_{\pi_z(\cdot)}(F)=(f^{(n)})_{n=0}^{\infty}$.
%and vice versa
%, by
%(\ref{mar06-eq6}) any Fock vector generates a function from
%$L^{2}(\pi_{z(\cdot)})$. We denote this unitary isomorphism
%by\begin{equation}
%I_{\pi_{z(\cdot)}}:L^{2}(\pi_{z(\cdot)})\longrightarrow\mathcal{F}(L^{2}(z(\cdot)dx)).\label{mar06-eq7}\end{equation}
%Notice that the pre-image of the Fock coherent state
%$e(f):=(\frac{f^{\otimes n}}{n!})_{n=0}^{\infty}$ coincides with the
%normalized exponential:\[
%I_{\pi_{z(\cdot)}}^{-1}(e(f))=e_{\pi_{z(\cdot)}}(f,\cdot).\]
Next we recall the definition of annihilation, creation and second
quantization operators on the total subset of Fock space vectors of
the form $ f^{(n)}=f_{1}\hat{\otimes}\ldots\hat{\otimes}f_{n}$. The
action of the  creation operator
denoted by $a^{+}(h)$, $h \in L^{2}(z(\cdot)dx)$ which acts on elements $f^{(n)}\in
L^{2}(z(\cdot)dx)^{\hat{\otimes}n}$ by $
a^{+}(h)f^{(n)}=h\hat{\otimes}f^{(n)}$. The annihilation operator $a^{-}(h)$ is the adjoint operator.

Given a semigroup $(e^{tA})_{t\geq0}$ on the Hilbert space $L^{2}(z(\cdot)dx)$ 
one can construct potentially unbounded operator
$\mathrm{Exp}(e^{tA})$ on $\mathcal{F}(L^{2}(z(\cdot)dx))$
defined by $e^{tA}\otimes...\otimes e^{tA}$ on each space
$L^{2}(z(\cdot)dx)^{\hat{\otimes}n}$. Note that whenever the operator norm of $e^{tA}$ is greater than one $\mathrm{Exp}(e^{tA})$ becomes an unbounded operator. Hence in order to be in the realm of semi-group theory we have to assume that $(e^{tA})_{t\geq0}$ is a contraction semi-group. In our case, for non-constant functions $z(\cdot)$ the semi-group $e^{tA}$ is in general
not a contraction, see Remark~\ref{remnoncons} and indeed the
previous construction gives not rise to a semi-group, cf.~Remark~\ref{remnosemi}. So the constructions below are not mathematically rigorous for non constant activity as they stand but well-defined on the coherent states, Bogoliubov exponentials respectively.

The generator of $\mathrm{Exp}(e^{tA})$ is the so-called second
quantization operator $d\mathrm{Exp}A$ corresponding to $A$.  The image of the Fock coherent state $e(f):=(f^{\otimes
n}/n!)_{n=0}^\infty$, $f\in L^{2}(z(\cdot)dx)$, under
$\mathrm{Exp}(e^{tA})$ is given by
\begin{equation}
\mathrm{Exp}(e^{tA})(e(f))=e(e^{tA}f).\label{KF7}
\end{equation}
Through the unitary isomorphism $I_{\pi_z(\cdot)}$ we obtain a contraction
semigroup $(\mathrm{Exp}_{\pi_{z(\cdot)}}(e^{tA}))_{t\geq0}$ on
$L^{2}(\pi_{z(\cdot)})$. In particular, since
$I_{\pi_z(\cdot)}^{-1}e(f)=e_{\pi_{z(\cdot)}}(f)$, it follows from (\ref{JL}) that
\begin{equation}\label{eqseconBog}
\mathrm{Exp}_{\pi_{z(\cdot)}}\left(e^{tA}\right)e_{B}(f)=e_{B}(e^{tA}f)  \exp\left( - \int_{\bbbr} (e^{tA}f(x) -f(x)) z(x) dx \right).\label{KF32}\end{equation}
The operator $L$, differentiating (\ref{eqseconBog}), can be expressed in terms of creation and
annihilation operators, namely,
\begin{equation}\label{eq4.1}
L = d\mathrm{Exp}_{\pi_{z(\cdot)}}( A ) + a^-_{\pi_{z(\cdot)}}(A^*z(\cdot)),
\end{equation}
where  $d\mathrm{Exp}_{\pi_{z(\cdot)}}( A
)$ and $a^-_{\pi_{z(\cdot)}}$ are, respectively,  the image of $d\mathrm{Exp}(A)$ and
$a^{-}$ under $I_{\pi_z(\cdot)}$, and hence 
\begin{equation}
a^-_{\pi_{z(\cdot)}}(h)F(\gamma) = \int dx z(x) \left( F(\gamma \cup x) - F(\gamma) \right).
\end{equation}
For more details see
e.g.~\cite{KoKuOl00}.

Finally, we would like to present the ``annihilation and creation
operators'' in the form more common in physical literature.  For each
$x\in\bbbr^{d}$ we define an operator $a^-(x)$ acting on
$\vec{f}=(f^{(n)})_n$ by
\[
(a^{-}(x)\vec{f})^{(n)}(y_{1},\ldots,y_{n})=\sqrt{n+1}f^{(n+1)}(x,y_{1},\ldots,y_{n}).
\]
The adjoint of the operator $a^{-}(x)$ is formally given by
\[
(a^{+}(x)\vec{f})^{(n)}(y_{1},\ldots,y_{n})=\frac{1}{\sqrt{n}}\sum_{k=1}^{n}\delta(x-y_{k})f^{(n-1)}(y_{1},\dots,\hat{y}_{k},\ldots,y_{n}),
\]
where $\hat{y}_{k}$ means that the $k$-th coordinate is excluded. Actually,
the expression for $a^{+}(x)$ is well-defined as a quadratic form. It is easy to check the
relations $a^{\pm}(f)  =  \int_{\bbbr^{d}}a^{\pm}(x)f(x)dx$  in
the sense of quadratic forms.
Rewritten in a style more similar
to the common usage in physics, the right hand side of (\ref{eq4.1}) takes the form
\[
\int dx\,z(x)\int dy \left(a(x-y)-\a0  \delta(x-y)\right) \left(
a^+_{\pi}(x)a^-_{\pi}(y) - a^-_{\pi}(y) \right),
\]
where $a_\pi^\pm(x)$ are the image of $a^\pm(x)$ under $I_{\pi_z(\cdot)}$ for more details see \cite{KoKuOl00}.

\subsection{Second quantization operator approach}\label{Aug08-eq2}

\subsubsection{The symmetric case}
If $a$ is an even function, then the generator $L$ is symmetric
(see Lemma \ref{LemRev}) and one can straightforwardly give an alternative construction to
the one presented in Section \ref{Section2} for the infinite particle dynamics. As a matter of fact,
for $a$ an even function, and for each constant $z>0$, the operator
$L$ defined in (\ref{DefKawaOp}) gives rise to a Dirichlet form on
$L^2(\Gamma,\pi_z)$,
\[
\int_\Gamma \pi_z(d\gamma)(LF)(\gamma) F(\gamma)
\!=\! -\frac{1}{2}\!\int_\Gamma\!\pi_z(d\gamma)\!\sum_{x \in \gamma}\!
\int_{\bbbr^d}\!\!dy\, a(x-y)
\left|F(\gamma\!\setminus\!x \cup y)-F(\gamma)\right|^2.
\]
This allows the use of Dirichlet forms techniques to derive a Markov
process on $\Gamma$ with \textit{cadlag\/} paths and having $\pi_z$
as an invariant measure \cite{KLRII05}. Actually, one can show that
in this situation $L$ is the second quantization operator
corresponding to the non-positive self-adjoint operator $A$ on
$L^2(\bbbr^d,zdx)$ (Remark \ref{KFRem1}). Hence, $L$ is a
non-positive essentially self-adjoint operator on $L^2(\Gamma,\pi_z)$,
and it is the generator of a contraction semigroup on
$L^2(\Gamma,\pi_z)$.

\subsubsection{The asymmetric case}

According to (\ref{KF6b}) and (\ref{JL}) the action of the
transition kernel on coherent states corresponding $\pi_{z}$ is
given by
\begin{equation}
\label{JL2} \mathbf{E}_\gamma\left[e_{\pi_z}(\varphi,\mathbf{X}_t)
\right] = e_{\pi_{z}}(e^{tA}\varphi,\gamma)
\exp\left({\int_{\bbbr^d}dx\, \left(e^{tA}\varphi(x)
-\varphi(x)\right)}z\right).
\end{equation}
This allows to express the action of the transition probability on
coherent states in terms of annihilation and creation operators
\begin{equation}
\label{SemiAnn} \mathrm{Exp}_{\pi_z}(e^{tA})e^{a^-_{\pi_{z}}(e^{tA^*}z-z)}.
\end{equation}
Observe that the right-hand side of (\ref{JL2}) gives the action of
a semigroup which preserves coherent states.
%Due to
%Lemma~\ref{LemGen} we see that the generator of this semigroup
%coincides on coherent states with $L$.
Lemma~\ref{LemClos} shows
that $L$ is the generator of a strongly continuous Markov semigroup.
%In particular, we see that it is sufficient to consider the generator
%on the coherent states.

\begin{lemma}\label{LemClos}
Let $\mathcal{D}_\mathrm{coh}$ be the vector space spanned by all
functions $e_B(\varphi)$ with $\varphi \in L^1(\bbbr^d,zdx)\cap
L^2(\bbbr^d,zdx)$. The operator $L$ restricted to
$\mathcal{D}_\mathrm{coh}$ is closable in $L^2(\Gamma,\pi_z)$ and its closure is an
extension of the operator $(L,\mathcal{F}L^0_\mathrm{e b}(\Gamma_a))$ defined
in Section~\ref{SecDefKa}. Moreover, it is the generator of a Markov
semigroup and when $A$ is symmetric then $ e^{tL}=\mathrm{Exp}_{\pi_z}( e^{tA})$.
\end{lemma}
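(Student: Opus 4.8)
The plan is to realise the closure of $(L,\mathcal{D}_\mathrm{coh})$ as the generator of the second quantisation semigroup $\mathrm{Exp}_{\pi_z}(e^{tA})$ transported to $L^2(\Gamma,\pi_z)$, and then to read off every asserted property from that identification. Throughout recall that $z>0$ is a constant. By Proposition~\ref{PropSemiA}, $(e^{tA})_{t\geq 0}$ is a (uniformly, hence strongly) continuous contraction semigroup on $L^2(\mathbb{R}^d,zdx)$, and by the remark following Proposition~\ref{KFProp1} each $e^{tA}$ is convolution with a probability measure $\mu_t$, so $(e^{tA})_{t\geq 0}$ is Markovian. A standard second-quantisation argument then gives that $\big(\mathrm{Exp}(e^{tA})\big)_{t\geq 0}$ is a strongly continuous contraction semigroup on the Fock space $\mathcal{F}(L^2(zdx))$ with generator $d\mathrm{Exp}(A)$; transporting along the unitary $I_{\pi_z}$ yields a strongly continuous contraction semigroup $(T_t)_{t\geq 0}:=\big(\mathrm{Exp}_{\pi_z}(e^{tA})\big)_{t\geq 0}$ on $L^2(\Gamma,\pi_z)$ with generator $\mathcal{L}:=I_{\pi_z}^{-1}\,d\mathrm{Exp}(A)\,I_{\pi_z}$. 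Since on the $n$-particle sector $\mathrm{Exp}(e^{tA})$ is convolution by $\mu_t^{\otimes n}$, $T_t$ coincides, on the dense set discussed below, with the transition semigroup $\mathbf{P}_t$ of Subsection~\ref{Subsection2.1} (cf.~(\ref{KF32}) and (\ref{KF6b})); as $\pi_z$ is an invariant probability measure (reversible when $a$ is even, Lemma~\ref{LemRev}), $\mathbf{P}_t$ extends to a Markovian contraction on $L^2(\Gamma,\pi_z)$ which necessarily equals $T_t$. Hence $\mathcal{L}$ generates a Markov semigroup.

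Next I would show that $\mathcal{D}_\mathrm{coh}$ is a core for $\mathcal{L}$ on which $\mathcal{L}$ agrees with $L$. For $\varphi\in L^1(\mathbb{R}^d,zdx)\cap L^2(\mathbb{R}^d,zdx)$ one has $e_B(\varphi)\in L^2(\Gamma,\pi_z)$, since $\int_\Gamma|e_B(\varphi,\gamma)|^2\pi_z(d\gamma)=\exp\big(\int_{\mathbb{R}^d}(2\,\mathrm{Re}\,\varphi+|\varphi|^2)z\,dx\big)<\infty$; the same Laplace-transform computation, together with the fact that $I_{\pi_z}$ maps $e_B(\varphi)$ to a nonzero scalar multiple of the exponential vector $e(\varphi)$, shows that $\mathcal{D}_\mathrm{coh}$ is dense in $L^2(\Gamma,\pi_z)$. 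By (\ref{KF32}) we have $T_t e_B(\varphi)=e_B(e^{tA}\varphi)$, and $e^{tA}$ (convolution by $\mu_t$) maps $L^1(\mathbb{R}^d,zdx)\cap L^2(\mathbb{R}^d,zdx)$ into itself, so $\mathcal{D}_\mathrm{coh}$ is $T_t$-invariant; a dense, semigroup-invariant subspace is a core. To compute $\mathcal{L}$ there I would differentiate $t\mapsto e_B(e^{tA}\varphi)$ at $t=0$ in $L^2(\Gamma,\pi_z)$: the formal pointwise derivative is $\sum_{x\in\gamma}(A\varphi)(x)\,e_B(\varphi,\gamma\setminus x)$, which by (\ref{Eq2.10}) equals $(Le_B(\varphi))(\gamma)$. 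To upgrade this to a norm-limit one estimates the $L^2(\pi_z)$-norm of the "annihilation-type" sums $\gamma\mapsto\sum_{x\in\gamma}g(x)e_B(\varphi,\gamma\setminus x)$ via the corresponding closed Laplace-transform formula over $\pi_z$ and applies dominated convergence to the difference quotients, using norm-continuity of $t\mapsto Ae^{tA}\varphi$. Thus $\mathcal{D}_\mathrm{coh}\subset\mathrm{Dom}(\mathcal{L})$ and $\mathcal{L}|_{\mathcal{D}_\mathrm{coh}}=L|_{\mathcal{D}_\mathrm{coh}}$. Since $\mathcal{L}$ is closed and $\mathcal{D}_\mathrm{coh}$ is a core, $(L,\mathcal{D}_\mathrm{coh})$ is closable with closure $\mathcal{L}$, and in particular $e^{tL}=T_t=\mathrm{Exp}_{\pi_z}(e^{tA})$.

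It remains to see that $\overline{L|_{\mathcal{D}_\mathrm{coh}}}=\mathcal{L}$ extends $(L,\mathcal{F}L_{eb}^0(\Gamma_a))$. Let $F\in\mathcal{F}L_{eb}^0(\Gamma_a)$. Then $F\in L^2(\Gamma,\pi_z)$ (cylinder and exponentially bounded, $\pi_z$ having exponential moments), and by the bound (\ref{eq2.1}) also $LF\in L^2(\Gamma,\pi_z)$. By Lemma~\ref{nov06-eq1}, for every $\gamma\in\Theta$ (and $\pi_z(\Theta)=1$),
\[
\mathbf{E}_\gamma[F(\mathbf{X}_t)]-F(\gamma)=\int_0^t\mathbf{E}_\gamma[LF(\mathbf{X}_s)]\,ds;
\]
using $T_s=\mathbf{P}_s$ and Fubini (justified by the exponential bounds and Lemma~\ref{lemapri}) this is the $L^2(\Gamma,\pi_z)$-identity $T_tF-F=\int_0^t T_s(LF)\,ds$. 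Since $s\mapsto T_s(LF)$ is norm-continuous, dividing by $t$ and letting $t\to 0^+$ gives $F\in\mathrm{Dom}(\mathcal{L})$ with $\mathcal{L}F=LF$, i.e.\ $\overline{L|_{\mathcal{D}_\mathrm{coh}}}$ extends $(L,\mathcal{F}L_{eb}^0(\Gamma_a))$, as required.

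The main obstacle is the differentiation step in the second paragraph: turning the pointwise $t$-derivative of $e_B(e^{tA}\varphi)$ into an $L^2(\pi_z)$-derivative, which requires uniform control (for $t$ near $0$) of the $L^2(\pi_z)$-norms of the annihilation-type sums and of the corresponding difference quotients. Everything else — the Fock isomorphism, the core criterion, and the passage from Lemma~\ref{nov06-eq1} to the integrated semigroup identity — is routine once these estimates are available.
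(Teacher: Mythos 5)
Your proposal is correct and follows the same overall skeleton as the paper's proof: both realise the closure of $L$ on the span of coherent states as the generator of the transported second quantization semigroup $T_t=\mathrm{Exp}_{\pi_z}(e^{tA})$, with $T_te_B(\varphi)=e_B(e^{tA}\varphi)$, and both reduce the hard analytic content to differentiating $t\mapsto e_B(e^{tA}\varphi)$ in $L^2(\Gamma,\pi_z)$ — a step the paper also only sketches (``by computation one sees that $\varphi\mapsto e_B(\varphi)$ is differentiable\ldots''), so your level of detail there is comparable. Where you genuinely diverge is in the proof that the closure extends $(L,\mathcal{F}L^0_{\mathrm{eb}}(\Gamma_a))$: the paper computes the adjoint $L^*$ via the Mecke identity to get closability and then asserts, via the bound (\ref{eq2.1}), that the closures of $(L,\mathcal{D}_{\mathrm{coh}})$ and $(L,\mathcal{F}L^0_{\mathrm{eb}}(\Gamma_a))$ coincide; you instead invoke the Dynkin-type formula of Lemma~\ref{nov06-eq1}, upgrade it to the Bochner identity $T_tF-F=\int_0^tT_s(LF)\,ds$ in $L^2(\Gamma,\pi_z)$, and conclude $F\in\mathrm{Dom}(\mathcal{L})$ with $\mathcal{L}F=LF$ by strong continuity. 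Your route is more transparent and self-contained (it makes explicit what the paper's one-line ``the closures coincide'' leaves to the reader), at the price of needing the identification of $T_s$ with the probabilistic transition operator on all of $L^2$ — which does follow, as you note, from $\pi_z$-invariance, Jensen, and agreement on the dense set $\mathcal{D}_{\mathrm{coh}}$. You also obtain Markovianity from this probabilistic identification rather than from abstract positivity of second quantization; either works. One small logical reordering: the criterion ``dense $+$ $T_t$-invariant $\Rightarrow$ core'' presupposes $\mathcal{D}_{\mathrm{coh}}\subset\mathrm{Dom}(\mathcal{L})$, so the differentiation step must come before, not after, the core conclusion; the content is all present in your write-up, only the order of presentation should be switched.
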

%\begin{equation}\label{DefKawaOpa}
%(LF)(\gamma) := \sum_{x \in \gamma} \int dy\, a(x-y) \left( F(\gamma
%\setminus x \cup y)- F(\gamma) \right).
%\end{equation}

The proof is an adaptation for a non-symmetric generator of the
proof of Proposition~4.1 in \cite{AKR97} and see also \cite{KoKuOl08} for details. Technically,  in our case the proof
is simpler, because $A$ is a bounded operator and we consider
functions of the type $e_B(\varphi)$.

\medskip
\begin{proof}
According to the bound (\ref{eq2.1}), one can calculate the action
of the adjoint of $(L,\mathcal{F}L^0_\mathrm{e b}(\Gamma_a))$ by the
Mecke formula, i.e.,
\[
(L^*F)(\gamma) := \sum_{x \in \gamma} \int_{\bbbr^d} dy\, \left( a(y-x)
F(\gamma \setminus x \cup y)- a(x-y)F(\gamma) \right), \quad \forall F\in
\mathcal{F}L^0_\mathrm{e b}(\Gamma_a).
\]
Hence $L^*$ is densely defined and $L$ is closable on any dense subset of
$\mathcal{F}L^0_\mathrm{e b}(\Gamma_a)$. Using again the bound (\ref{eq2.1}) one sees
that the closures of $(L,\mathcal{D}_\mathrm{coh})$ and $(L,\mathcal{F}L^0_\mathrm{e b}(\Gamma_a))$
coincide. As $A$ is a bounded operator in $L^1(\bbbr^d,zdx)$ and in
$L^2(\bbbr^d,zdx)$, the space $\mathcal{D}(\bbbr^d)$ is a
core of $A$ in $L^2$ as well as in $L^1$. Define an $L^2(\Gamma, \pi_z)$ semigroup
 on $\mathcal{D}_\mathrm{coh}$ (as an extension by continuity) of the following explicit formula
\[
T_t e_B(\varphi) := e_B(e^{tA}\varphi).
\]
Hence $\mathcal{D}_\mathrm{coh}$ is a core for the generator of $T_t$.
By computation one sees that $\varphi \mapsto e_B(\varphi)$ is a differentiable
function from $L^1(\bbbr^d,zdx)\cap L^2(\bbbr^d,zdx)$ into
$L^2(\Gamma,\pi_z)$ with derivative given by
\[
\varphi \mapsto \sum_{x \in \gamma} \varphi(x) e_B(\varphi, \gamma
\setminus x).
\]
Summarizing,  this yields that $T_t$ is a strongly continuous
contraction semigroup on $L^2(\Gamma,\pi_z)$ such that for all $\varphi \in L^1(\bbbr^d,zdx)\cap L^2(\bbbr^d,zdx)$ holds
\[
\frac{d}{dt}T_t e_B(\varphi)(\gamma) = \sum_{x \in \gamma}
Ae^{tA}\varphi(x) e_B(e^{tA}\varphi,\gamma \setminus x).
\]
For $\varphi \in \mathcal{D}(\bbbr^d)$ the r.h.s.~is just $Le_B(e^{tA}\varphi)$.
 Since $A$ is bounded this can be extended to
all $\varphi \in L^1(\bbbr^d,zdx)\cap L^2(\bbbr^d,zdx)$.
These two facts combined yield $e^{tL}=T_t$ and
$\mathcal{D}_\mathrm{coh}$ is a core of $L$. Moreover, the operator
$L$ coincides with the second quantization operator
$d\mathrm{Exp}_{\pi_z}(A)$ on $\mathcal{D}_\mathrm{coh}$. This space
is also a core for $d\mathrm{Exp}_{\pi_z}(A)$, cf.~\cite{BeKo88}.
Hence the two operators coincide.
\end{proof}

\section{Local equilibrium dynamics}\label{led}

The considerations in Section~\ref{seqeq} were essentially for Poisson measures $\pi_z$
which have as activity parameter a constant function $z$.
According to Lemma~\ref{LemRev} and Section \ref{SecDefKa}, such
measures are reversible measures for the free Kawasaki process. As a
first step towards other initial distributions, in this section we
consider the so-called local equilibrium case, that are, Poisson
measures with a non constant activity parameter. In this case that the 
activity $z$ is a varying on a scale much larger than the scale of the range of the jump kernel $a$, then in a volume which is large in comparison to the range of $a$ but small in comparison of the scale variation of $z$, 
the Poisson measure effectively has a constant activity and is effectively reversible measure for the dynamics. 
\begin{remark}
\label{remnosemi} Although for bounded non-constant functions $z$
the process starting in $\pi_z$ can be constructed explicitly,
cf.~Subsection~\ref{Subsection2.1}, one cannot expect that the
expression given in (\ref{onebog}) can be extended to a semigroup either in the
$L^1(\Gamma_a,\pi_{z})$ or in the $L^2(\Gamma_a,\pi_{z})$ sense. The
existence of the semigroup in an $L^p$-sense can only be expected
w.r.t.~an invariant measure. Indeed, this can be seen from the following
equality
\[
\frac{\| e^{tL} e_B(f) \|_{L^p(\pi_{z})}}{\| e_B(f)
\|_{L^p(\pi_{z})}} = \exp\left(\frac{1}{p} \int_{\bbbr^d} \Big(
| 1+e^{tA}f(x)|^p - | 1+f(x)|^p \Big) z(x)dx\right),
\]
and the fact that
\[
\| e^{tL} \|_{\textrm{Op},L^p(\pi_{z})} \geq \sup_{f \in
\mathcal{D}(\bbbr^d), f\geq 0,} \ \exp\left(\frac{1}{p}
\int_{\bbbr^d} \Big( (1+e^{tA}f(x))^p - ( 1+f(x))^p \Big)
z(x)dx\right),
\]
where the r.h.s.~is infinite if $(e^{tA})_{t\geq 0}$ is not a
contraction semigroup (see Remark~\ref{remnoncons}).
\end{remark}

Let $z\geq 0$ be a bounded measurable function. According to
(\ref{KF31}), for the one-dimensional distribution of the free
Kawasaki process $(\mathbf{X}_t)_{t\geq 0}$ with initial
distribution $\pi_{z}$ one finds
\begin{equation}
\label{eq6.9} \int_\Theta e_B(\varphi,\gamma)
P^{\mathbf{X}}_{\pi_z,t}(d\gamma) = \exp\left(\int_{\bbbr^d}dx\
e^{tA}\varphi(x) z(x)\right),
\end{equation}
for every $\varphi\in\mathcal{D}(\bbbr^d)$ and every $t\geq 0$.

For each $t\geq 0$ fixed, let us now consider the linear functional defined on
$L^1(\bbbr^d,dx)$ by
\[
L^1(\bbbr^d,dx)\ni f\mapsto
\int_{\bbbr^d}dx\,(e^{tA}f)(x)z(x).
\]
Due to the contractivity property of the semigroup $e^{tA}$ in
$L^1(\bbbr^d,dx)$, see Proposition \ref{PropSemiA}, and to the
boundedness of $z$, this functional is bounded on
$L^1(\bbbr^d,dx)$, and thus it is defined by a kernel $z_t\in
L^\infty(\bbbr^d,dx)$, that is,
\begin{equation}
\int_{\bbbr^d}dx\, e^{tA}f(x)z(x)=
\int_{\bbbr^d}dx\,f(x)z_t(x), \label{Natal15}
\end{equation}
for all $f\in L^1(\bbbr^d,dx)$. Moreover, since $e^{tA}$ is
positivity preserving in $L^1(\bbbr^d,dx)$, it follows from
(\ref{Natal15}) that $z_t\geq 0$.

This shows by (\ref{eq6.9}) that the one-dimensional distribution
$P^{\mathbf{X}}_{\pi_z,t}$ is just the Poisson measure $\pi_{z_t}$
(see also \cite{Do56}, \cite{D53}).

Furthermore, the path-space measure $\mathbf{P}^{\pi_z}$
corresponding to the process is also Poissonian. It is a Poisson
measure on $\Gamma_{D([0,\infty),\bbbr^d)}$ with intensity
$P^{z}$, where $P^{z}$ is the path-space measure on
$D([0,\infty),\bbbr^d)$ of the one-particle jump process
corresponding to $A$  with initial distribution $z(x)dx$. Using the
fact that the path-space measure $\mathbf{P}_\gamma$ is supported on
$D([0,\infty),\Theta)$ one easily sees that this implies that
$\mathbf{P}^{\pi_z}$ is actually supported on a subset of
$\Gamma_{D([0,\infty),\bbbr^d)}$ which can be naturally
identified with $D([0,\infty),\Theta)$.

\begin{lemma}
The path-space measure $\mathbf{P}^{\pi_z}$ of the process
$\mathbf{X}$ starting with initial distribution $\pi_{z}$ is the
Poisson measure $\pi_{P^{z}}$ on
$\Gamma_{D([0,\infty),\bbbr^d)}$. In particular, for any
continuous function $\varphi$ on $[0,\infty) \times \bbbr^d$
with compact support we have
\[
\int_{\Gamma_{D([0,\infty),\bbbr^d)}}\mathbf{P}^{\pi_z}(d\mathbf{\omega}) e^{-\int_0^\infty \langle
\varphi(t,\cdot), \mathbf{\omega}(t) \rangle dt}
 = \exp \left(
\int_{D([0,\infty),\bbbr^d)} \left[ e^{-\int_0^\infty
\varphi(t,\omega(t)) dt}-1 \right] P^{z}(d\omega) \right).
\]
\end{lemma}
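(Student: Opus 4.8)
The plan is to establish the Poissonian nature of the path-space measure $\mathbf{P}^{\pi_z}$ by computing its Laplace transform and recognizing it as that of $\pi_{P^z}$. The main tool is the path-space formula (\ref{eqpath}), which expresses $\mathbf{E}_\gamma$ of the path functional as a product over $x \in \gamma$ of one-particle path expectations $E_x$, combined with the defining Laplace transform (\ref{A7}) of the Poisson measure $\pi_z$.

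First I would write, for a non-negative continuous $\varphi$ on $[0,\infty) \times \mathbb{R}^d$ with compact support,
\[
\int_{\Gamma_{D([0,\infty),\mathbb{R}^d)}}\mathbf{P}^{\pi_z}(d\mathbf{\omega})\, e^{-\int_0^\infty \langle \varphi(t,\cdot), \mathbf{\omega}(t)\rangle dt} = \int_\Theta \pi_z(d\gamma)\, \mathbf{E}_\gamma\Bigl[ e^{-\int_0^\infty \langle \varphi(t,\cdot), \mathbf{X}_t\rangle dt}\Bigr],
\]
using the definition $\mathbf{P}^{\pi_z} = \int_\Theta \mathbf{P}_\gamma\, \pi_z(d\gamma)$ and the identification of $D([0,\infty),\Theta)$ with a subset of $\Gamma_{D([0,\infty),\mathbb{R}^d)}$ noted just above the statement. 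By (\ref{eqpath}) the inner expectation equals $\prod_{x \in \gamma} E_x\bigl[ e^{-\int_0^\infty \varphi(t,X_t)\,dt}\bigr]$, which is $e_B(g,\gamma)$ for the function $g(x) := E_x[e^{-\int_0^\infty \varphi(t,X_t)\,dt}] - 1$. Since $\varphi$ is non-negative with compact support, $0 \le E_x[e^{-\int \varphi(t,X_t)dt}] \le 1$, so $-1 \le g \le 0$; one should check that $g$ has enough decay/integrability against $z\,dx$ for the next step — this follows because $\varphi$ has compact support in space, so $g(x)$ is controlled by the probability that $X_t$ started from $x$ ever visits the spatial support of $\varphi$, which is integrable given the bound (\ref{eqbnda}) on $a$ (this is essentially the content of Lemma~\ref{lemapri}).

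Next I would integrate $e_B(g,\gamma)$ against $\pi_z(d\gamma)$. Writing $e_B(g,\gamma) = \prod_{x\in\gamma}(1+g(x)) = e^{\langle \log(1+g),\gamma\rangle}$ (valid since $g > -1$) and applying the Laplace transform formula (\ref{A7}) with $e^{\log(1+g)} - 1 = g$, one gets
\[
\int_\Theta e_B(g,\gamma)\,\pi_z(d\gamma) = \exp\Bigl( \int_{\mathbb{R}^d} g(x)\, z(x)\, dx\Bigr) = \exp\Bigl( \int_{\mathbb{R}^d} \bigl( E_x[e^{-\int_0^\infty \varphi(t,X_t)dt}] - 1\bigr) z(x)\, dx\Bigr).
\]
Finally, recognizing that $\int_{\mathbb{R}^d} E_x[\,\cdot\,]\, z(x)\, dx = \int_{D([0,\infty),\mathbb{R}^d)} [\,\cdot\,]\, P^z(d\omega)$ by the very definition of $P^z$ as the law of the one-particle process with initial distribution $z(x)dx$, the right-hand side becomes exactly $\exp\bigl( \int_{D} [ e^{-\int_0^\infty \varphi(t,\omega(t))dt} - 1 ]\, P^z(d\omega)\bigr)$, which is the Laplace transform of the Poisson measure $\pi_{P^z}$ on $\Gamma_{D([0,\infty),\mathbb{R}^d)}$. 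Since these Laplace transforms (evaluated on a sufficiently rich class of $\varphi$) determine the measure, this identifies $\mathbf{P}^{\pi_z} = \pi_{P^z}$.

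The main obstacle is not conceptual but a matter of care with the function spaces: one must justify that $g(x) = E_x[e^{-\int\varphi(t,X_t)dt}] - 1$ is $z\,dx$-integrable so that (\ref{A7}) applies and the exponent is finite, and one must make sure formula (\ref{eqpath}) is available for the specific $\varphi$ at hand (it was stated there for non-negative continuous compactly supported $\varphi$, which matches). A secondary technical point is the measure-determining property: (\ref{A7}) is stated for $\varphi \in \mathcal{D}(\mathbb{R}^d)$, so strictly one should either approximate the compactly supported continuous $\varphi$ by smooth ones or invoke the standard fact that the Laplace transform on non-negative continuous compactly supported test functions already characterizes a Radon-measure-valued law on path space; either route is routine. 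Everything else — the product structure, the Bogoliubov-exponential identity, and the change of variables from $E_x$-integration to $P^z$-integration — is immediate from the definitions recalled in the excerpt.
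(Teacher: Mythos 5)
Your proposal is correct and follows essentially the same route as the paper: reduce via the definition of $\mathbf{P}^{\pi_z}$ and formula (\ref{eqpath}) to $\int_\Theta e_B\bigl(E_{\,\cdot}[e^{-\int_0^\infty\varphi(t,X_t)\,dt}]-1,\gamma\bigr)\pi_z(d\gamma)$, then evaluate with the Poissonian Laplace transform and reinterpret the exponent as an integral against $P^z$. The paper's own proof is exactly this three-line computation; your additional remarks on integrability of $g$ and on the measure-determining property are sensible extra care but not a different method.
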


\begin{proof}
By the definition of the path-space measure corresponding to the initial
measure $\pi_z$ one has
\[
\int_{\Gamma_{D([0,\infty),\bbbr^d)}} \mathbf{P}^{\pi_z}(d\mathbf{\omega}) e^{-\int_0^\infty \langle
\varphi(t,\cdot), \mathbf{\omega}(t) \rangle dt}
 = \int_\Theta \mathbf{E}_\gamma \left[
e^{-\int_0^\infty \langle \varphi(t,\cdot), \mathbf{X}_t \rangle dt}
\right] \pi_z(d\gamma).
\]
Due to (\ref{eqpath}) the latter expectation is equal to
\[
  \int_\Theta e_B(E_{\,\cdot}\left[ e^{-\int_0^\infty
\varphi(t,X_t) dt}\right]-1,\gamma) \pi_z(d\gamma) = \exp
\left(\int_{\bbbr^d} E_x\left[ e^{-\int_0^\infty \varphi(t,X_t)
dt}-1\right] z(x)dx \right),
\]
which yields the required result.
\end{proof}

\subsection{Long time asymptotic \label{subsecLargetime}}

In this subsection we want to study the behavior of the
one-dimensional distribution of the infinite particle dynamics for large
times. As mentioned above (\ref{Natal15}), the free Kawasaki dynamic
leaves the Poissonian structure of the initial distribution
unchanged and only the underlying intensity develops with time.
Thus, the analysis of the long time asymptotic behavior reduces to
the long time asymptotic of the intensity,
see Lemma~\ref{lemasympred} for details. In other words, we reduced the problem
to a pseudo-one-particle problem where the intensity plays the role of the
initial distribution.

%In the following remark we discuss which
%initial distributions are natural in this context.
%
%\begin{remark}
%As we work in the framework of a system scale much larger than the
%time scale, namely, we first perform the thermodynamic limit and only
% afterwards consider the time asymptotic, the situation is more
%subtle than it might seem at a first glance. The question is which
%class of initial distributions is the natural one. Usually, in the
%study of an one particle system one assumes that the initial data
%is integrable. In this case the dynamics is ergodic and the
%probability measure concentrated on the empty configuration is the
%invariant measure. Physically, this situation describes systems with
%zero density and perturbations of them. These perturbations are
%already singular in the sense that the corresponding Poisson
%measures are mutually singular. From a physical point of view,
%non-zero densities are interesting. The corresponding invariant
%measures are the Poisson measures with constant intensities $z>0$.
%However, these intensities are not any longer integrable
%perturbations of the zero intensity. Furthermore, also their mutual
%differences are not integrable. Hence a natural setting for the
%initial intensities seems to be bounded non-negative measurable
%functions.
%\end{remark}
%%In the sequel, we weaken this condition a bit by assuming
%%only local integrability of $z$ in order to treat also intensity
%%measures non-equivalent to the Lebesgue measure.
%

\begin{definition}\label{defamean}
One says that a function $z\in L^1_\mathrm{loc}(\bbbr^d,dx)$
has arithmetic mean, denoted by $\mathrm{mean}(z)$, whenever
the following limit exists
\begin{equation}
\label{mean} \lim_{R\to +\infty}
\frac{1}{\mathrm{vol}(B(R))}\int_{B(R)} dx\,z(x).
\end{equation}
% This definition can be also extended to Radon measures on $\bbbr^d$.
\end{definition}

The following proposition gives a condition on the activity under which the long time asymptotic of the one time distribution of the infinite particle dynamics exists, it is still Poissonian with constant intensity; the constant being the arithmetic mean of the initial intensity.

%
%\begin{proposition}
%\label{PrAsyn} Assume that $a$ has all moments finite. Let $z$ be a
%bounded measurable function having arithmetic mean. Then the
%one-dimensional distribution $P^{\mathbf{X}}_{\pi_z,t}$ converges
%weakly to $\pi_{\mathrm{mean}(z)}$ when $t$ goes to infinity.
%\end{proposition}

\begin{proposition}
\label{PrAsya} Let $z\geq 0$ be a bounded measurable function whose
Fourier transform is a signed measure. Then $z$ has arithmetic mean
and the one-dimensional distribution $P^{\mathbf{X}}_{\pi_z,t}$
converges weakly to $\pi_{\mathrm{mean}(z)}$ when $t$ goes to
infinity.
\end{proposition}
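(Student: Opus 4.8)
The plan is to use that, by (\ref{Natal15}), the one-dimensional distribution $P^{\mathbf{X}}_{\pi_z,t}$ equals the Poisson measure $\pi_{z_t}$ with bounded intensity $z_t\geq 0$, so that it suffices to prove convergence of Laplace functionals. For $\psi\in C_c(\mathbb{R}^d)$ with $\psi\leq 0$, the Poisson formula (extended from $\mathcal{D}(\mathbb{R}^d)$ to such $\psi$ by approximation, using $z_t\in L^\infty$) together with (\ref{Natal15}) applied to $f:=e^\psi-1\in L^1(\mathbb{R}^d,dx)$ gives
\[
\int_\Theta e^{\langle\psi,\gamma\rangle}\,P^{\mathbf{X}}_{\pi_z,t}(d\gamma)=\exp\Big(\int_{\mathbb{R}^d}\big(e^{\psi(x)}-1\big)z_t(x)\,dx\Big)=\exp\Big(\int_{\mathbb{R}^d}(e^{tA}f)(x)\,z(x)\,dx\Big),
\]
while the target is $\int_\Theta e^{\langle\psi,\gamma\rangle}\,\pi_{\mathrm{mean}(z)}(d\gamma)=\exp\big(\mathrm{mean}(z)\int_{\mathbb{R}^d}(e^{\psi(x)}-1)\,dx\big)$. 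Since $\pi_{\mathrm{mean}(z)}$ is a probability measure, convergence of these Laplace functionals for all such $\psi$ yields the asserted weak convergence on $\Gamma$. Hence everything reduces to showing that $z$ has an arithmetic mean and that
\begin{equation}\label{eqkeylim}
\lim_{t\to\infty}\int_{\mathbb{R}^d}(e^{tA}f)(x)\,z(x)\,dx=\mathrm{mean}(z)\int_{\mathbb{R}^d}f(x)\,dx\qquad\text{for all }f\in L^1(\mathbb{R}^d,dx).
\end{equation}

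First I would establish (\ref{eqkeylim}) by Fourier analysis. Let $\nu:=\hat z$ denote the finite signed measure which by assumption is the Fourier transform of $z$; comparing tempered distributions, $z(x)=(2\pi)^{-d/2}\int_{\mathbb{R}^d}e^{i\langle k,x\rangle}\,\nu(dk)$ for a.e.\ $x$. Since $e^{tA}f\in L^1(\mathbb{R}^d,dx)$ ($e^{tA}$ is an $L^1$-contraction, Proposition~\ref{PropSemiA}) and $|\nu|$ is finite, Fubini's theorem and Proposition~\ref{KFProp1} give
\[
\int_{\mathbb{R}^d}(e^{tA}f)(x)\,z(x)\,dx=\int_{\mathbb{R}^d}\widehat{e^{tA}f}(-k)\,\nu(dk)=\int_{\mathbb{R}^d}e^{t(2\pi)^{d/2}(\hat a(-k)-\hat a(0))}\,\hat f(-k)\,\nu(dk).
\]
By Remark~\ref{Natal6}, $\mathrm{Re}\big(\hat a(\pm k)-\hat a(0)\big)\leq 0$ with equality only at $k=0$, so the exponential factor has modulus $\leq 1$ for all $t,k$, equals $1$ at $k=0$, and tends to $0$ as $t\to\infty$ for every $k\neq 0$; as $|\hat f(-k)|\leq(2\pi)^{-d/2}\|f\|_{L^1(\mathbb{R}^d,dx)}$, the integrand is dominated by a $|\nu|$-integrable constant. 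Dominated convergence then yields $\int_{\mathbb{R}^d}(e^{tA}f)(x)z(x)\,dx\to\hat f(0)\,\nu(\{0\})=(2\pi)^{-d/2}\big(\int_{\mathbb{R}^d}f(x)\,dx\big)\,\nu(\{0\})$.

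Next I would show that $z$ has arithmetic mean, equal to $(2\pi)^{-d/2}\nu(\{0\})$. Inserting the Fourier representation of $z$ into (\ref{mean}), interchanging the integrations (again legitimate since $|\nu|$ is finite) and rescaling $x=Ry$,
\[
\frac{1}{\mathrm{vol}(B(R))}\int_{B(R)}z(x)\,dx=\frac{1}{\mathrm{vol}(B(1))}\int_{\mathbb{R}^d}\widehat{\mathbf{1}_{B(1)}}(-Rk)\,\nu(dk).
\]
Here $|\widehat{\mathbf{1}_{B(1)}}(-Rk)|\leq\|\widehat{\mathbf{1}_{B(1)}}\|_\infty=(2\pi)^{-d/2}\mathrm{vol}(B(1))$, while by the Riemann--Lebesgue lemma $\widehat{\mathbf{1}_{B(1)}}(-Rk)\to 0$ as $R\to\infty$ for $k\neq 0$ and $\widehat{\mathbf{1}_{B(1)}}(0)=(2\pi)^{-d/2}\mathrm{vol}(B(1))$. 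Dominated convergence therefore shows that the limit (\ref{mean}) exists and equals $(2\pi)^{-d/2}\nu(\{0\})$. Combining this with the preceding paragraph gives (\ref{eqkeylim}), and hence the proposition.

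I expect the \emph{main obstacle} to be the careful handling of the objects involved rather than any single hard estimate: justifying the Fourier inversion and the multiplication formula for the non-integrable function $z$ (whose Fourier transform is only a measure), and reducing weak convergence of the Poisson laws $\pi_{z_t}$ — whose intensities $z_t$ are not integrable, nor integrable perturbations of a fixed reference intensity — to the scalar limit (\ref{eqkeylim}). Once Remark~\ref{Natal6} is available, the two dominated-convergence arguments above are routine.
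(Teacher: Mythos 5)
Your proof is correct and follows essentially the same route as the paper: reduction of weak convergence to Laplace functionals (the paper's Lemma~\ref{lemasympred}), the one-particle large-time limit via Parseval and dominated convergence using Remark~\ref{Natal6} (Lemma~\ref{LemasyA}), and identification of the limit with $(2\pi)^{-d/2}\hat z(\{0\})=\mathrm{mean}(z)$ (Corollary~\ref{Colmeana}). The only minor deviation is that you evaluate the arithmetic mean directly on the ball indicator via Riemann--Lebesgue, whereas the paper first proves the limit on $\mathcal{S}(\mathbb{R}^d)$ and extends to indicators by the $L^1$-approximation Lemma~\ref{Lemmeana}; both are sound.
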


%This result is proved combining Lemma~\ref{lemasympred}
%and Corollary~\ref{Colmeana}.
%
%Note that, in particular, one sees that the process is not ergodic,
%because the large time asymptotic depends on the initial
%distribution. This is the consequence of the existence of a
%conservation law, namely, the conservation of the number of particles
%by the finite particle version of our process.
%
%The situation of integrable intensities is technically comparable
%with the case in which the time scale is much larger than the space
%scale, e.g., instead of $\bbbr^d$ one works on a torus.
%Both cases are covered by the less restrictive assumption that $z$ has a
%Fourier transform $\hat{z}$ which is a signed measure.

Under the assumptions of Proposition \ref{PrAsya}, the activity $z$
has arithmetic mean, cf.~Corollary~\ref{Colmeana}. Then, using Fourier
transform, one can easily derive the long time asymptotic for the pseudo-one-particle dynamics, cf.~Lemma~\ref{LemasyA}. By Lemma~\ref{lemasympred} this implies the existence of the long time asymptotic for the infinite particle dynamics and identifies the limit. Using the same technique one can show that the
long time asymptotic only depends on the space asymptotic of the
intensity $z$, cf.~Corollary~\ref{Corasy}.

In Proposition~\ref{PrAsya}, the assumption concerning the Fourier transform
is actually not very transparent, because we cannot reasonably restate it  in
terms of $z$ in the position variables. Therefore, we give several
examples for illustration, cf.~Example~\ref{Exmean}, and we derive certain properties of the
arithmetic mean. Some classes of activities with reasonable asymptotic behavior do not fulfill the aforementioned Fourier transform assumption, e.g.~Example~\ref{Exmean}(v).
Note that also not every bounded function has an arithmetic mean, cf.~Remark~\ref{Aug08-eq1}.

Let us discuss the properties of the arithmetic mean in some more details before we prove the lemmas mentioned above.

%The almost general case
%presented in Proposition~\ref{PrAsyn} is much more involved and
%hence the technical core of its proof is postponed to
%Lemma~\ref{lemasyn}
%%and ~\ref{Lemmean}
%for clarity of the
%presentation.

%More, generally the result might hold also for all Radon measure
%which have an arithmetic mean. Our techniques might extend as well
%to this case.

%In order to get a better feeling for the dependence of the large
%time asymptotic on the initial distribution, let us first discuss

\begin{remark}
\label{remnoari} If two functions $z_1,z_2 \in
L^1_\mathrm{loc}(\bbbr^d,dx)$ have arithmetic mean, then for
every $\alpha_1,\alpha_2 \in \bbbr$ the function $\alpha_1 z_1
+ \alpha_2 z_2$ also has arithmetic mean and $\mathrm{mean}(\alpha_1
z_1+\alpha_2 z_2)
=\alpha_1\mathrm{mean}(z_1)+\alpha_2\mathrm{mean}(z_2)$.
\end{remark}

\begin{example}
\label{Exmean} To be more concrete we give some examples:
\begin{enumerate}
\item[(i)] If $z$ is a constant function then  $\mathrm{mean}(z) =z$.
\item[(ii)] If $z$ decays to zero, i.e., for every $\varepsilon >0 $ there exists an $R>0$ such that
$|z(x)| \leq \varepsilon$ for $x \notin B(R)$, then
$\mathrm{mean}(z)=0$.
\item[(iii)] If $z \in L^p(\bbbr^d,dx)$, $p \in [1,\infty)$, then $\mathrm{mean}(z)=0$.
\item[(iv)] Also trigonometric functions have no influence, e.g., for $d=1$ and $z(x)=1+\varepsilon\sin(x)$ we have  $\mathrm{mean}(z)=1$.
\item[(v)] A less trivial example is the following one. Given $z_0, z_1\geq
0$, let $z$ be the function defined at each
$x=(x_1,\ldots,x_d)\in\bbbr^d$ by
\[
z(x) = \left\{ \begin{array}{ll}
          z_1 & \text{if}\ x_1\geq 0 \\
          z_0 & \text{otherwise}
        \end{array} \right. .
\]
In this case one finds $\mathrm{mean}(z)=\frac{z_0+z_1}{2}$. Note
that in this case $\hat{z}$ is not a signed measure.
\end{enumerate}
\end{example}

\begin{remark}
\label{Aug08-eq1}
The arithmetic mean does not exist for all bounded non-negative
functions.
\begin{enumerate}
\item For $d=1$ and
\[
z(x) = \left\{ \begin{array}{ll}
          1 & \text{if}\ 2^{2k} \leq |x| \leq 2^{2k+1} \ \text{for a} \ k \in \bbbn_0 \\
          0 & \text{otherwise}
        \end{array} \right.,
\]
the integral in (\ref{mean}) oscillates between $1/3$ and $2/3$. Thus
$z$ does not have arithmetic mean.

\item Another example is given by the following slowly oscillating function
\[
z(x)=\cos(\ln(1+|x|))+z_0,\quad x\in\bbbr^d,
\]
where $z_0$ is a constant greater or equal to $1$. Then for large $R$ it holds
\[
\frac{1}{\mathrm{vol}(B(R))}\int_{B(R)} z(x)\,dx \sim
%\frac{d}{1+d^2}\sin(\ln(1+R))+\frac{d^2}{1+d^2}\cos(\ln(1+R))+z_0.
\frac{d}{\sqrt{1+d^2}}\ \sin\!\Big(\ln(R+1)+\arctan(d)\Big)+z_0.
\]
In general slowly varying functions will show in general spurious behavior.
\end{enumerate}
\end{remark}

\medskip

First, we prove that one may reduce the proof of Proposition \ref{PrAsya} to a pseudo-one-particle
system. This follows from the independent movement of the particles
discussed in Section~\ref{Section2}.

\begin{lemma}
\label{lemasympred} Let $0 \leq z\in
L^1_\mathrm{loc}(\bbbr^d,dx)$ be such that
\[
\lim_{t\to +\infty}\int_{\bbbr^d} dx\,e^{tA}\varphi(x)\ z(x)=:
\int_{\bbbr^d} z_\infty(dx)\varphi(x)\
\]
exists for all $\varphi \in \mathcal{D}(\bbbr^d)$. If $z_\infty$
is a (non-negative) Radon measure on $\bbbr^d$, then the one-dimensional
distribution $P^{\mathbf{X}}_{\pi_{z},t}$ converges weakly to
$\pi_{z_\infty}$ when $t$ tends to infinity.
\end{lemma}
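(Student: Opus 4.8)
The plan is to reduce the statement about the infinite particle process to the convergence of Bogoliubov functionals, which by the Poissonian structure identified in (\ref{eq6.9}) amounts entirely to a statement about the one-particle semigroup acting on the intensity. Concretely, recall from (\ref{eq6.9}) that for every $\varphi\in\mathcal{D}(\mathbb{R}^d)$ and every $t\geq 0$,
\[
\int_\Theta e_B(\varphi,\gamma)\,P^{\mathbf{X}}_{\pi_z,t}(d\gamma)
= \exp\left(\int_{\mathbb{R}^d}dx\,e^{tA}\varphi(x)\,z(x)\right),
\]
while $\pi_{z_\infty}$ has Bogoliubov functional $\exp\big(\int_{\mathbb{R}^d}z_\infty(dx)\,\varphi(x)\big)$ (this makes sense since $z_\infty$ is assumed to be a non-negative Radon measure, so $\pi_{z_\infty}$ is a well-defined Poisson measure). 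By the hypothesis of the lemma, for each fixed $\varphi\in\mathcal{D}(\mathbb{R}^d)$ the exponent on the right converges as $t\to+\infty$ to $\int_{\mathbb{R}^d}z_\infty(dx)\,\varphi(x)$, and therefore
\[
\lim_{t\to+\infty}\int_\Theta e_B(\varphi,\gamma)\,P^{\mathbf{X}}_{\pi_z,t}(d\gamma)
= \int_\Theta e_B(\varphi,\gamma)\,\pi_{z_\infty}(d\gamma)
\]
for all $\varphi\in\mathcal{D}(\mathbb{R}^d)$.

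Next I would upgrade this convergence of Bogoliubov functionals to weak convergence of the measures $P^{\mathbf{X}}_{\pi_z,t}$ on $\Theta$ (equivalently on $\Gamma$ with the vague topology). The standard route is via Laplace transforms: writing $\varphi = e^{\psi}-1$ for $\psi\in\mathcal{D}(\mathbb{R}^d)$ with $\psi\le 0$ (so that $-1<\varphi\le 0$, which is admissible), one has $e_B(e^\psi-1,\gamma) = e^{\langle\psi,\gamma\rangle}$, hence the above gives convergence of $\int_\Theta e^{\langle\psi,\gamma\rangle}P^{\mathbf{X}}_{\pi_z,t}(d\gamma)$ to $\int_\Theta e^{\langle\psi,\gamma\rangle}\pi_{z_\infty}(d\gamma)$ for all non-positive $\psi\in\mathcal{D}(\mathbb{R}^d)$. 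Convergence of Laplace transforms at all such test functions, together with the fact that the limiting functional is the Laplace transform of the Poisson measure $\pi_{z_\infty}$ (which is a genuine probability measure on $\Gamma$), is well known to imply weak convergence on $\Gamma$ with the vague topology; this is the analogue for configuration spaces of Lévy's continuity theorem and can be invoked from the literature on point processes (e.g.\ via the Laplace-functional characterization of convergence in distribution of random measures).

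The one delicate point, and the place where the hypothesis ``$z_\infty$ is a non-negative Radon measure'' is genuinely used, is that a priori the pointwise limit of the functionals $\varphi\mapsto\int e^{tA}\varphi\,z\,dx$ could fail to be given by integration against a locally finite measure — it might only be a positive linear functional on $\mathcal{D}(\mathbb{R}^d)$ without the required continuity/local-finiteness, in which case it would not correspond to any Poisson measure on $\Gamma$ and no weak limit would exist. Granting the assumption, however, the limiting functional $\exp(\langle z_\infty,\varphi\rangle)$ is exactly $B_{\pi_{z_\infty}}(\varphi)$, and the argument closes. I do not expect any serious obstacle beyond correctly citing the continuity theorem for Laplace functionals of random measures and checking that the class of admissible test functions $\varphi=e^\psi-1$ is rich enough to separate points — which it is, since it determines the Laplace transform at all non-positive $\psi\in\mathcal{D}(\mathbb{R}^d)$ and hence the law of the point process.
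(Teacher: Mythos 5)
Your proof is correct and follows essentially the same route as the paper's: both pass from the identity (\ref{eq6.9}) to the Laplace transform by inserting $e^{\psi}-1\in\mathcal{D}(\mathbb{R}^d)$, apply the hypothesis to identify the limit as the Laplace transform of $\pi_{z_\infty}$, and conclude by the continuity theorem for Laplace functionals of random measures (the paper cites \cite[Theorem~4.2]{Ka76} for this last step). Your additional remark on where the Radon-measure assumption on $z_\infty$ is used is accurate but not needed beyond what the paper does.
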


\begin{proof} According
to (\ref{eq6.9}), for all $\varphi\in\mathcal{D}(\bbbr^d)$ we
have
\[
\int_\Theta
P^{\mathbf{X}}_{\pi_{z},t}(d\gamma)\,e^{\langle\varphi,\gamma\rangle}
=\exp\left(\int_{\bbbr^d}dx\, e^{tA}(e^\varphi-1)(x) \
z(x)\right),
\]
with $e^\varphi-1\in\mathcal{D}(\bbbr^d)$ too. By
assumption, the latter converges when $t \rightarrow \infty$ to
\[
\exp\left( \int_{\bbbr^d}z_\infty(dx)\, (e^{\varphi(x)}-1)
\right).
\]
By the definition (\ref{A7}) of a Poisson measure, observe that this
is the Laplace transform of $\pi_{z_\infty}$. The convergence of Laplace transforms implies weak convergence, see
 \cite[Theorem~4.2]{Ka76}).
\end{proof}
%
% \begin{proposition}
% \label{LemFour} Let $(\mu_n)_{n\in \bbbn}$ be a sequence of
% probability measures on $\Gamma$. Assume that their Laplace
% transforms converge pointwisely to the Laplace transform of a
% probability measure $\mu$ on $\Gamma$, i.e.,
% \[
% \lim_{n \rightarrow \infty}\int_\Gamma
% \mu_n(d\gamma)\,e^{\langle\varphi,\gamma\rangle} = \int_\Gamma \mu
% (d\gamma)\,e^{\langle\varphi,\gamma\rangle},\quad
% \forall\,\varphi\in\mathcal{D}(\bbbr^d), \quad \varphi \geq 0.
% \]
% Then $(\mu_n)_{n\in \bbbn}$ converges weakly to $\mu$,
% \end{proposition}

It remains to derive the long time asymptotic for the pseudo-one-particle
system.
%First, we treat the convergence of the one particle system
%under the extra assumption that the Fourier transform of $z$ is a
%signed measure.

\begin{lemma}
\label{LemasyA} Let $z\geq 0$ be a  bounded measurable function such
that its Fourier transform $\hat{z}$ is a signed measure. For all
$\varphi\in\mathcal{S}(\bbbr^d)$ one has
\begin{equation}
\label{asyA} \lim_{t\to +\infty}\int_{\bbbr^d}
dx\,e^{tA}\varphi(x)\  z(x) = (2\pi)^{-d/2}\
\hat{z}(\{0\})\int_{\bbbr^d} dx\,\varphi(x).
\end{equation}
\end{lemma}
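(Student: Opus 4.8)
The plan is to move everything to the Fourier side, where the semigroup $e^{tA}$ acts by multiplication, and then isolate the contribution of the atom of $\hat z$ at the origin. By Proposition~\ref{KFProp1} and Parseval's formula, for $\varphi\in\mathcal{S}(\mathbb{R}^d)$ one has
\[
\int_{\mathbb{R}^d} dx\,e^{tA}\varphi(x)\,z(x)
= \int_{\mathbb{R}^d}\hat z(dk)\,\overline{\widehat{e^{tA}\varphi}(-k)}
= (2\pi)^{d/2}\int_{\mathbb{R}^d}\hat z(dk)\,e^{t(2\pi)^{d/2}(\hat a(-k)-\hat a(0))}\,\hat\varphi(-k),
\]
after a change of variable $k\mapsto -k$ this is $\displaystyle (2\pi)^{d/2}\int_{\mathbb{R}^d}\hat z(dk)\,e^{t(2\pi)^{d/2}(\hat a(k)-\hat a(0))}\,\hat\varphi(k)$, using that $\hat a(-k)-\hat a(0)$ and $\hat a(k)-\hat a(0)$ have the same real part. (One should be a little careful stating this cleanly when $a$ is not even; the point is only that the modulus $|e^{t(2\pi)^{d/2}(\hat a(k)-\hat a(0))}| = e^{t(2\pi)^{d/2}\mathrm{Re}(\hat a(k)-\hat a(0))}\le 1$ by Remark~\ref{Natal6}.)

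Next I would split the integral over $\{0\}$ and over $\mathbb{R}^d\setminus\{0\}$. On the atom at $0$ the exponential factor equals $1$ for all $t$, contributing exactly $(2\pi)^{d/2}\hat z(\{0\})\hat\varphi(0) = \hat z(\{0\})\int_{\mathbb{R}^d}dx\,\varphi(x)$, which is the claimed limit. It remains to show the integral over $\mathbb{R}^d\setminus\{0\}$ tends to $0$. Here the integrand is bounded in modulus by $|\hat\varphi(k)|$, which is $|\hat z|$-integrable since $\varphi\in\mathcal{S}(\mathbb{R}^d)$ and $\hat z$ is a finite signed measure (boundedness of $z$ forces $\hat z$ to be finite; this needs a word). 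By Remark~\ref{Natal6}, $\mathrm{Re}(\hat a(k)-\hat a(0))<0$ for every $k\neq 0$, so the exponential factor converges pointwise to $0$ on $\mathbb{R}^d\setminus\{0\}$ as $t\to\infty$. Dominated convergence with respect to the finite measure $|\hat z|$ then gives that this term vanishes in the limit, completing the proof; combining with Lemma~\ref{lemasympred} one also reads off that $z$ has arithmetic mean equal to $(2\pi)^{-d/2}\hat z(\{0\})$.

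The main obstacle I anticipate is purely a matter of rigor at the interface between the three relevant test-function classes and the distributional Fourier transform: justifying the Parseval identity $\int e^{tA}\varphi\, z\, dx = \int \overline{\widehat{e^{tA}\varphi}}\,d\hat z$ when $z$ is merely a bounded function whose Fourier transform (a priori a tempered distribution) happens to be a finite signed measure. One clean route is to approximate: pair $z$ with $e^{tA}\varphi$ as tempered distributions, note $e^{tA}\varphi\in\mathcal{S}(\mathbb{R}^d)$ (its Fourier transform, by \eqref{Natal5}, is $\hat\varphi$ times a bounded smooth multiplier, though not necessarily Schwartz unless one checks decay — here the decay of $\hat\varphi$ carries the product), and use that the Fourier transform is a topological isomorphism of $\mathcal{S}'(\mathbb{R}^d)$ so that $\langle z, e^{tA}\varphi\rangle = \langle \hat z, \mathcal{F}^{-1}(e^{tA}\varphi)\rangle$; since $\hat z$ is a finite measure this last pairing is literally an integral. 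The rest is the elementary dominated-convergence argument above. Everything else — the pointwise decay of the exponential, finiteness of $\hat z$, integrability of $\hat\varphi$ — is routine given Remark~\ref{Natal6} and Proposition~\ref{KFProp1}.
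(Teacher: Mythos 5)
Your argument is the same as the paper's: Parseval together with the explicit Fourier multiplier representation of $e^{tA}$ from Proposition~\ref{KFProp1}, the pointwise observation from Remark~\ref{Natal6} that the multiplier tends to $\Ii_{\{0\}}(k)$, and dominated convergence with respect to $|\hat z|$ using $|\hat\varphi|$ as the dominating function; your extra care about justifying the Parseval pairing when $\hat z$ is a priori only a tempered distribution is a reasonable supplement to the paper's terser treatment. The one real flaw is a constant: with the paper's unitary convention $\hat f(k)=(2\pi)^{-d/2}\int e^{-i\langle x,k\rangle}f(x)\,dx$, Parseval gives
\[
\int_{\mathbb{R}^d} dx\, e^{tA}\varphi(x)\,z(x)=\int_{\mathbb{R}^d}\hat z(dk)\,e^{t(2\pi)^{d/2}(\hat a(-k)-\hat a(0))}\hat\varphi(-k)
\]
with \emph{no} prefactor $(2\pi)^{d/2}$, so the atom at the origin contributes $\hat z(\{0\})\hat\varphi(0)=(2\pi)^{-d/2}\hat z(\{0\})\int\varphi\,dx$, which is the stated limit; your version produces $\hat z(\{0\})\int\varphi\,dx$ and therefore does not match the lemma. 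Also, your parenthetical claim that boundedness of $z$ forces $\hat z$ to have finite total variation is not actually true in general (and is not needed: local finiteness and temperedness of $|\hat z|$ already make $\hat\varphi$ integrable against $|\hat z|$, which is all the dominated convergence argument requires).
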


\begin{proof}
Through the Parseval formula and the explicit formula (\ref{Natal5})
for the semigroup $(e^{tA})_{t\geq 0}$ one finds
\[
\int_{\bbbr^d} dx\, e^{tA}\varphi(x) z(x) =\int_{\bbbr^d}
\hat z(dk)\,e^{t(2\pi)^{d/2}(\hat a (-k)-\hat a(0))}
\hat\varphi(-k).
\]
Since for all $k\in\bbbr^d$, $\mathrm{Re}(\hat a(k)-\hat a(0))
\leq 0$ (cf.~Remark \ref{Natal6}), for every $t\geq 0$ we have
\[
\left|e^{t(2\pi)^{d/2}(\hat a (-k)-\hat
a(0))}\hat\varphi(-k)\right|\leq \vert\hat\varphi(-k)\vert,\quad
\forall\,k\in\bbbr^d,
\]
where $\hat\varphi\in L^1(\bbbr^d,\hat z)$. Thus, an
application of the Lebesgue dominated convergence theorem yields
\[
 \lim_{t\to \infty}\int_{\bbbr^d} dx\,e^{tA}\varphi(x) z(x)
=\int_{\bbbr^d} \hat z(dk)\,\hat\varphi(-k)\lim_{t\to
\infty}e^{t(2\pi)^{d/2}(\hat a (-k)-\hat a(0))},
\]
where
\[
\lim_{t\to \infty}e^{t(2\pi)^{d/2}(\hat a (-k)-\hat a(0))} =\left\{
\begin{array}{ll}
1, & k=0 \\
0, & k\not= 0
\end{array}
\right. .
\]
\end{proof}

The next lemma shows that the existence of the arithmetic mean is
stable under $L^1$-convergence. In particular, it yields that the
condition  assumed in Proposition~\ref{PrAsya} on the Fourier transform of the intensity is sufficient to ensure the existence of
the arithmetic mean.

\begin{lemma}
\label{Lemmeana} Let $z\geq 0$ be a bounded measurable function. Assume
that there exists a total subset of $ L^1(\bbbr^d,dx)$ and a
$C>0$ such that for all $\varphi$ in that total subset we have
\begin{equation}
\label{eqmean} \lim_{R \rightarrow \infty}R^{-d}\int_{\bbbr^d}
dx\,\varphi(x/R) z(x) = C \int_{\bbbr^d} \varphi(x) dx.
\end{equation}
Then $z$ has an arithmetic mean. In addition, equality (\ref{eqmean}) holds for
$C=\mathrm{mean}(z)$ and for any $\varphi \in L^1(\bbbr^d,dx)$.
\end{lemma}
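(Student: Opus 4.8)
The plan is to regard the left-hand side of (\ref{eqmean}) as a uniformly bounded family of linear functionals on $L^1(\mathbb{R}^d,dx)$ and to upgrade convergence on the total subset to convergence on the whole space by a routine three-$\varepsilon$ argument. For $R>0$ put
\[
\ell_R(\varphi):=R^{-d}\int_{\mathbb{R}^d}dx\,\varphi(x/R)\,z(x)=\int_{\mathbb{R}^d}dy\,\varphi(y)\,z(Ry),
\]
the second equality by the change of variables $x=Ry$. Since $z\geq 0$ is bounded, $|\ell_R(\varphi)|\leq\Vert z\Vert_u\,\Vert\varphi\Vert_{L^1(\mathbb{R}^d,dx)}$ for every $R>0$, so $(\ell_R)_{R>0}$ is equibounded on $L^1(\mathbb{R}^d,dx)$; this plays the role that the uniform boundedness principle would otherwise provide, so no completeness argument is needed. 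Let $\ell(\varphi):=C\int_{\mathbb{R}^d}\varphi(x)\,dx$, which is likewise bounded on $L^1(\mathbb{R}^d,dx)$.

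By hypothesis $\ell_R(\varphi)\to\ell(\varphi)$ for every $\varphi$ in the given total subset, hence, by linearity of $\ell_R$ and $\ell$, for every $\varphi$ in its linear span $V$, which is dense in $L^1(\mathbb{R}^d,dx)$. Given $\varphi\in L^1(\mathbb{R}^d,dx)$ and $\varepsilon>0$, I would choose $\psi\in V$ with $\Vert\varphi-\psi\Vert_{L^1(\mathbb{R}^d,dx)}<\varepsilon$ and estimate
\[
|\ell_R(\varphi)-\ell(\varphi)|\leq\Vert z\Vert_u\,\Vert\varphi-\psi\Vert_{L^1(\mathbb{R}^d,dx)}+|\ell_R(\psi)-\ell(\psi)|+|C|\,\Vert\varphi-\psi\Vert_{L^1(\mathbb{R}^d,dx)}.
\]
Taking $\limsup_{R\to\infty}$ gives $\limsup_{R\to\infty}|\ell_R(\varphi)-\ell(\varphi)|\leq(\Vert z\Vert_u+|C|)\varepsilon$, and letting $\varepsilon\to 0$ yields $\ell_R(\varphi)\to\ell(\varphi)$. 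This establishes (\ref{eqmean}) for every $\varphi\in L^1(\mathbb{R}^d,dx)$ with the same constant $C$.

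It remains to identify $C$ with $\mathrm{mean}(z)$. Applying the previous step to $\varphi=\Ii_{B(1)}\in L^1(\mathbb{R}^d,dx)$, the indicator of the closed unit ball, gives $\ell_R(\Ii_{B(1)})=R^{-d}\int_{B(R)}dx\,z(x)$ and $\ell(\Ii_{B(1)})=C\,\mathrm{vol}(B(1))$. Since $\mathrm{vol}(B(R))=R^d\,\mathrm{vol}(B(1))$, dividing by $\mathrm{vol}(B(1))$ shows
\[
\frac{1}{\mathrm{vol}(B(R))}\int_{B(R)}dx\,z(x)=\frac{1}{\mathrm{vol}(B(1))}\,\ell_R(\Ii_{B(1)})\ \longrightarrow\ C\qquad(R\to\infty),
\]
so the limit (\ref{mean}) exists and $\mathrm{mean}(z)=C$, which is exactly the remaining claim of the lemma.

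The argument is essentially routine and I do not expect a genuine obstacle; the only delicate points are that the hypothesis is stated for the total subset rather than for its (dense) linear span, which is handled by linearity of the functionals, and that the role normally played by Banach--Steinhaus is here taken over by the explicit bound $\Vert\ell_R\Vert\leq\Vert z\Vert_u$ coming from the boundedness of $z$.
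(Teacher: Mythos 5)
Your proof is correct and follows essentially the same route as the paper: the same three-term triangle-inequality estimate using the bound $\Vert z\Vert_u\Vert\varphi-\psi\Vert_{L^1}$ to pass from the total subset to all of $L^1(\mathbb{R}^d,dx)$, followed by evaluation on the (normalized) indicator of a ball to identify $C$ with $\mathrm{mean}(z)$. Your explicit remark that one must first pass to the linear span of the total subset by linearity is a small point the paper's write-up glosses over, but it does not change the argument.
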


\begin{proof}
Let $\varphi$ be an arbitrary function in $L^1(\bbbr^d,dx)$.
Then for any $\varepsilon >0$ there exists a $\psi$ in the aforementioned total set
such that $\|\varphi -\psi\|_{L^1} \leq
\varepsilon$ and $\psi$ fulfills (\ref{eqmean}). By the following
estimate, (\ref{eqmean}) also holds for $\varphi$:
\begin{eqnarray*}
\lefteqn{ \left| R^{-d}\int_{\bbbr^d} dx\,\varphi(x/R) z(x) -
C \int_{\bbbr^d}dx\varphi(x) \right| }\\
&\leq& \left| \int_{\bbbr^d} dx\, \left(\varphi(x) -
\psi(x)\right) z(xR) \right| + | C |\
\left|\int_{\bbbr^d}dx (\psi(x) - \varphi(x)) \right| \\
&&+ \left| R^{-d}\int_{\bbbr^d}
dx\,\psi(x/R) z(x) - C \int_{\bbbr^d}dx \psi(x) \right| \\
&\leq& \Big( \|z\|_u + | C | \Big) \  \|\varphi -\psi\|_{L^1} +
\left| R^{-d}\int_{\bbbr^d} dx\,\psi(x/R) z(x) - C
\int_{\bbbr^d}dx \psi(x) \right|.
\end{eqnarray*}
Hence equality (\ref{eqmean}) holds for
 any $\varphi \in L^1(\bbbr^d,dx)$.
Applying the result obtained till now to
\begin{equation}
\label{eqind} \varphi_r(x) := \left\{ \begin{array}{ll}
\frac{1}{\mathrm{vol}(B(r))}, & x \in B(r) \\
0, & \text{otherwise}
\end{array}\right. \qquad r>0
\end{equation}
yields that the l.h.s.~of (\ref{eqmean}) coincides with the
$\mathrm{mean}(z)$.
\end{proof}

\begin{corollary}
\label{Colmeana} Given a bounded measurable function $z\geq 0$ the
following two results hold:
\begin{enumerate}
\item If $z$
has arithmetic mean, then the limit in (\ref{eqmean}) exists for all $\varphi
\in L^1(\bbbr^d,dx)$ and $C=\mathrm{mean}(z)$.
\item If the Fourier transform of $z$ is a
signed measure, then $z$ has arithmetic mean and
\[
\mathrm{mean}(z)= (2\pi)^{-d/2}\ \hat{z}(\{0\}).
\]
\end{enumerate}
\end{corollary}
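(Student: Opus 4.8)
The plan is to derive both statements from Lemma~\ref{Lemmeana}: that lemma reduces the existence of the arithmetic mean, the identification of its value, and the full strength of (\ref{eqmean}) on $L^1(\mathbb{R}^d,dx)$ to the verification of (\ref{eqmean}), with \emph{some} constant $C$, on a single conveniently chosen total subset of $L^1(\mathbb{R}^d,dx)$. So in each part it is enough to pick a good total subset and check (\ref{eqmean}) there directly; the constant is then automatically $\mathrm{mean}(z)$.

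For part 1 I would read the definition of $\mathrm{mean}(z)$ itself as a verification of (\ref{eqmean}). With $\varphi_r$ the normalized indicator of $B(r)$ as in (\ref{eqind}), the substitution $y=x/R$ rewrites $R^{-d}\int_{\mathbb{R}^d}\varphi_r(x/R)\,z(x)\,dx$ as the average $\mathrm{vol}(B(Rr))^{-1}\int_{B(Rr)}z(x)\,dx$, which tends to $\mathrm{mean}(z)$ as $R\to\infty$; hence (\ref{eqmean}) holds with $C=\mathrm{mean}(z)$ for each $\varphi_r$ and, by linearity, for all their finite linear combinations. The step I expect to be the main obstacle is turning this family into a genuinely total subset of $L^1(\mathbb{R}^d,dx)$: the combinations of the $\varphi_r$ are radial, and to reach non-radial functions one must handle dilated sets $B(Ra,Rr)$ centered off the origin, where the only cheap control is the sandwich between the centered averages over $B(R(r-|a|))$ and $B(R(r+|a|))$ (for $r>|a|$, using $z\ge0$) --- two quantities that bracket but do not identify $\mathrm{mean}(z)$ for a general bounded $z$. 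In fact some care with the statement seems needed here: a bounded $z$ of the form $z(x)=f(x_1)$ with $f(t)+f(-t)=1$ and $f$ a suitably oscillating $\{0,1\}$-valued function on $[0,\infty)$ has $\mathrm{mean}(z)=1/2$ yet makes (\ref{eqmean}) fail for $\varphi$ the indicator of a box contained in the half-space $\{x_1\ge0\}$, so one should either restrict the admissible test functions or add a hypothesis forcing the off-center averages to converge.

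For part 2 the argument is more self-contained. I would take the total subset to be $\mathcal{S}(\mathbb{R}^d)$ and check (\ref{eqmean}) there by the Parseval computation already used in the proof of Lemma~\ref{LemasyA}: since $z$ is bounded, $\hat z$ is a tempered distribution, and by hypothesis it is a (Radon, polynomially bounded) signed measure, so for $\varphi\in\mathcal{S}(\mathbb{R}^d)$
\[
R^{-d}\int_{\mathbb{R}^d}\varphi(x/R)\,z(x)\,dx=\int_{\mathbb{R}^d}\hat\varphi(-Rk)\,\hat z(dk),
\]
with $\hat\varphi\in L^1(|\hat z|)$. Letting $R\to\infty$ I would split the right-hand side as $\hat\varphi(0)\,\hat z(\{0\})+\int_{k\neq0}\hat\varphi(-Rk)\,\hat z(dk)$ and show the last integral vanishes: on $\{0<|k|<\delta\}$ it is at most $\|\hat\varphi\|_\infty\,|\hat z|(\{0<|k|<\delta\})$, which is small for small $\delta$ because $|\hat z|$ is Radon, while on $\{|k|\ge\delta\}$ dominated convergence applies, the rapid decay of $\hat\varphi$ together with the polynomial growth of $\hat z$ supplying an $R$-uniform dominating function and the pointwise limit being $0$. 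Since $\hat\varphi(0)=(2\pi)^{-d/2}\int_{\mathbb{R}^d}\varphi(x)\,dx$, this gives (\ref{eqmean}) with $C=(2\pi)^{-d/2}\hat z(\{0\})$; as $\mathcal{S}(\mathbb{R}^d)$ is dense in $L^1(\mathbb{R}^d,dx)$, Lemma~\ref{Lemmeana} then yields that $z$ has an arithmetic mean and that $\mathrm{mean}(z)=(2\pi)^{-d/2}\hat z(\{0\})$. Keeping the test functions Schwartz, rather than working with indicators of balls whose Fourier transforms decay only polynomially, is what makes the $R$-uniform domination against $\hat z$ unproblematic.
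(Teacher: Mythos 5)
Your treatment of part 2 is essentially the paper's own argument: Parseval turns $R^{-d}\int\varphi(x/R)z(x)\,dx$ into $\int\hat\varphi(-kR)\,\hat z(dk)$, dominated convergence (the rapid decay of $\hat\varphi$ supplying an $R$-uniform majorant integrable against the polynomially bounded measure $\hat z$) gives the limit $\hat\varphi(0)\,\hat z(\{0\})$ on the total set $\mathcal{S}(\mathbb{R}^d)$, and Lemma~\ref{Lemmeana} upgrades this to all of $L^1(\mathbb{R}^d,dx)$ and identifies the constant with $\mathrm{mean}(z)$. Your extra splitting of the integral near $k=0$ is harmless but not needed; a single application of dominated convergence with the pointwise limit $\hat\varphi(0)\Ii_{\{0\}}(k)$ suffices, which is exactly what the paper does.

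For part 1 the paper's entire proof is the sentence that the claim is a direct consequence of Lemma~\ref{Lemmeana} ``using the total set of all $\varphi_r$, defined as in (\ref{eqind}), and their translates,'' and the obstruction you isolate is real: the hypothesis that $z$ has arithmetic mean verifies (\ref{eqmean}) only for the centred $\varphi_r$, whose linear span consists of radial functions and is therefore not total in $L^1(\mathbb{R}^d,dx)$, while for a translate $\varphi_r(\cdot-a)$ the quantity $R^{-d}\int\varphi_r(x/R-a)z(x)\,dx$ is the average of $z$ over the ball of radius $Rr$ centred at $Ra$, which the definition of the arithmetic mean does not control. Your counterexample is correct; in $d=1$ take $z=f$ with $f(t)=1$ for $2^{2k}\le t\le 2^{2k+1}$, $f(t)=0$ for the remaining $t\ge 0$, and $f(-t)=1-f(t)$: every centred average equals $1/2$, so $\mathrm{mean}(z)=1/2$, yet $R^{-1}\int_R^{2R}f(t)\,dt$ equals $1$ along $R=2^{2k}$ and $0$ along $R=2^{2k+1}$, so the limit in (\ref{eqmean}) does not exist for $\varphi=\Ii_{[1,2]}$. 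Hence part 1 is false as stated and the paper's one-line proof of it is erroneous; the conclusion survives only for radial $\varphi$, or under a strengthened hypothesis forcing the off-centre averages to converge, exactly as you suggest. The damage is local: every later use of Corollary~\ref{Colmeana} (in Proposition~\ref{PrAsya}, Corollary~\ref{Corasy} and Proposition~\ref{PrAsyGibbs}) invokes only part 2, which is sound.
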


\begin{proof}
The first part is a direct consequence of Lemma~\ref{Lemmeana} using
the total set of all $\varphi_r$, defined as in (\ref{eqind}), and
their translates.

 If $\hat{z}$ is
a signed measure, then due to Parseval's formula for all
$\varphi\in\mathcal{S}(\bbbr^d)$ it holds
\[
R^{-d}\int_{\bbbr^d} dx\,\varphi(x/R) z(x) =
\int_{\bbbr^d} \hat z(dk)\hat\varphi(-kR) \rightarrow
\hat{z}(\{0\}) \hat{\varphi}(0),\quad R\rightarrow \infty.
\]
The limit exists because $\hat{\varphi}$ is continuous and decays
quicker than any inverse polynomial. Hence, when $R$ goes to
$\infty$, $\hat{\varphi}(-kR)$ converges to the characteristic
function of the set $\{0\}$ multiplied by $\hat{\varphi}(0)$. The family $(\hat{\varphi}(-kR))_{R\geq
0}$ is dominated by an integrable function. The second part then
follows by Lebesgue's dominated convergence theorem and an
application of Lemma~\ref{Lemmeana} for the total set
$\mathcal{S}(\bbbr^d)$.
\end{proof}

\medskip

\begin{remark}\label{rem::fourieractiv}
Let us underline the difference between $\hat{z}(\{0\})$ and the
evaluation of $\hat{z}$ as a function at zero. We demonstrate this difference for
the case when $z$ is a bounded $L^1$-function. The Fourier transform
of $z$ is then a continuous function which we denote for the moment
by $\tilde{z}$. Evaluation at zero makes sense in this case.
However, we want to consider the Fourier transform as a generalized
function, i.e., as a linear form on a function space. We assume that
this linear form is regular enough to be expressed by a signed
measure. If, as in our case,  $z$ is an $L^1$-function, then its Fourier transform
is the measure $\hat{z}(dk)=\tilde{z}(k)dk$. Thus  $\hat{z}(\{0\})=0$, which
is totally different from $\tilde{z}(0)=\int_{\bbbr^d}dx z(x)$.

Below we list the Fourier transforms (interpreted as generalized
functions, signed measures, respectively) of the examples given in
Examples~\ref{Exmean}, provided they exist and with the same
enumeration:
\begin{enumerate}
\item[(i)] $(2\pi)^{d/2}z\delta_0(dk)$.
\item[(iii)] a continuous function for $p=1$ or an $L^{p/(p-1)}$-function for $1<p \leq 2$ multiplied in both cases by the Lebesgue measure.
\item[(iv)] $(2\pi)^{d/2} \left( \delta_0(dk) + i\varepsilon/2 \delta_1(dk) -
i\varepsilon/2 \delta_{-1}(dk)\right)$.
\item[(v)] In the one dimensional case the Fourier transform is the following generalized function
$\hat{z}(k)= \sqrt{2\pi}(z_0+z_1)/2\delta_0(k)
+i(z_0-z_1)/\sqrt{2\pi} \mathcal{P}(1/k)$, where $\mathcal{P}(1/k)$
denotes the Cauchy principal value of $1/k$. Using this explicit
formula the conclusion of Proposition~\ref{PrAsya} can be shown
although the assumptions of Proposition~\ref{PrAsya} are not
fulfilled.
\end{enumerate}
%The Fourier transforms of $L^1$-functions are themselves
%functions. Therefore, considered as a measure, i.e., as a density
%with respect to the Lebesgue measure, the measure of the set $\{0\}$
%is zero. Note as functions the Fourier transform are not zero ``at
%zero'', but as a measure it is zero ``at zero''. The constant 1 has
%as Fourier transform the Dirac delta measure supported at $0$.
%Trigonometric functions have as Fourier transform Dirac delta
%measure supported at other points than $0$. The Heaviside function
%has as Fourier transform the Dirac delta measure supported at $0$
%plus the Cauchy principal part $1/k$. The latter one is not a
%measure. The function $z(x)=|x|^2$ has also not a measure as Fourier
%transform.
\end{remark}

Applying the same technique as in Lemma~\ref{LemasyA} we can prove
that the time asymptotic depends only on the behavior of $z$ at
infinity.

\begin{corollary}
\label{Corasy} Let $z_1,z_2\geq 0$ be two bounded measurable
functions. If $z_1 - z_2\in L^p(\bbbr^d,dx)$ for some $1 \leq p \leq 2$, then the free
Kawasaki dynamics with initial distribution $\pi_{z_1}$ and the free
Kawasaki dynamics
with initial distribution $\pi_{z_2}$ have the same long time
asymptotic limit.
\end{corollary}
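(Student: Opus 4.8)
The plan is to reduce the statement — just as Proposition~\ref{PrAsya} was reduced by means of Lemma~\ref{lemasympred} — to the behaviour of the one-particle semigroup applied to the difference $w:=z_1-z_2$, and then to exploit that $w\in L^1(\mathbb{R}^d,dx)$, so that its Fourier transform is a continuous function and in particular assigns zero mass to the single point $\{0\}$.

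First I would recall, as in the proof of Lemma~\ref{lemasympred} (which rests on (\ref{eq6.9})), that for every $\psi\in\mathcal{D}(\mathbb{R}^d)$ and $i=1,2$,
\[
\int_\Theta e^{\langle\psi,\gamma\rangle}\,P^{\mathbf{X}}_{\pi_{z_i},t}(d\gamma)
=\exp\!\Big(\int_{\mathbb{R}^d}dx\,e^{tA}(e^\psi-1)(x)\,z_i(x)\Big),
\]
with $e^\psi-1\in\mathcal{D}(\mathbb{R}^d)$. Hence, by the continuity theorem for Laplace transforms quoted after Lemma~\ref{lemasympred} (\cite[Theorem~4.2]{Ka76}), it suffices to show that for every $\phi\in\mathcal{D}(\mathbb{R}^d)$
\[
\lim_{t\to+\infty}\int_{\mathbb{R}^d}dx\,e^{tA}\phi(x)\,\big(z_1(x)-z_2(x)\big)=0 ;
\]
indeed, this makes the exponents for $i=1$ and $i=2$ asymptotically equal, so one of the two Laplace functionals converges as $t\to+\infty$ if and only if the other does, and then necessarily to the same limit. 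Consequently $P^{\mathbf{X}}_{\pi_{z_1},t}$ and $P^{\mathbf{X}}_{\pi_{z_2},t}$ have the same weak limit, with one limit existing precisely when the other does.

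To establish the displayed limit I would repeat the Fourier computation from the proof of Lemma~\ref{LemasyA}, applied to $w:=z_1-z_2\in L^1(\mathbb{R}^d,dx)$. Parseval's identity together with the explicit formula (\ref{Natal5}) for $e^{tA}$ yields
\[
\int_{\mathbb{R}^d}dx\,e^{tA}\phi(x)\,w(x)
=\int_{\mathbb{R}^d}dk\,\hat w(k)\,e^{t(2\pi)^{d/2}(\hat a(-k)-\hat a(0))}\,\hat\phi(-k),
\]
the Parseval/Fubini step being legitimate because $w\in L^1$, $\hat\phi\in\mathcal{S}(\mathbb{R}^d)\subset L^1$, and $|e^{t(2\pi)^{d/2}(\hat a(k)-\hat a(0))}|\le 1$ for all $k$ by Remark~\ref{Natal6}. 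Since $w\in L^1$ the function $\hat w$ is bounded and continuous, so the integrand is dominated by $\Vert\hat w\Vert_u\,|\hat\phi(-k)|\in L^1(dk)$; by Remark~\ref{Natal6} the exponential factor converges, as $t\to+\infty$, to $1$ when $k=0$ and to $0$ when $k\neq0$. Lebesgue's dominated convergence theorem then shows that the integral tends to $\hat w(0)\hat\phi(0)$ times the Lebesgue measure of $\{0\}$, i.e.\ to $0$. (Alternatively, one may invoke the scaling identity (\ref{eqmean}) and Lemma~\ref{Lemmeana}: the Fourier transform of the $L^1$-function $w$, viewed as a signed measure, equals $\hat w(k)\,dk$ and thus charges $\{0\}$ with zero mass, so $\mathrm{mean}(z_1)=\mathrm{mean}(z_2)$ whenever either side is defined, consistently with Corollary~\ref{Colmeana}.)

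The only slightly delicate point is the bookkeeping in the Fourier step — applying Parseval and Fubini with $w$ merely integrable rather than square integrable, and tracking the $(2\pi)^{d/2}$ normalisation of Proposition~\ref{KFProp1}. This is routine and uses nothing beyond the inequality $|e^{t(2\pi)^{d/2}(\hat a(k)-\hat a(0))}|\le 1$ already recorded in Remark~\ref{Natal6} and the continuity of $\hat w$ for $w\in L^1$; no new estimate is required.
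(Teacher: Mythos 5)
Your proof is correct and follows essentially the same route as the paper: both reduce the claim to showing that $\int_{\mathbb{R}^d}dx\,e^{tA}\varphi(x)\,(z_1(x)-z_2(x))\to 0$ via the Parseval/dominated-convergence computation of Lemma~\ref{LemasyA} applied to the $L^1$ difference, whose Fourier transform is a continuous density and therefore charges $\{0\}$ with zero mass (equivalently, $\mathrm{mean}(z_1-z_2)=0$ as in Example~\ref{Exmean}(iii)). If anything, you are slightly more careful than the paper in running the Fourier argument directly on the signed function $z_1-z_2$ rather than invoking Lemma~\ref{LemasyA}, which is stated only for non-negative $z$, and in noting explicitly that one Laplace functional converges iff the other does.
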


\medskip

\begin{proof}
For all $\varphi\in\mathcal{D}(\bbbr^d)$ it follows from
Lemma~\ref{LemasyA} and Corollary~\ref{Colmeana} that
\begin{eqnarray*}
\int_{\bbbr^d}dx\, e^{tA}(e^\varphi-1)(x) z_1(x) -
\int_{\bbbr^d}dx\, e^{tA}(e^\varphi-1)(x) z_2(x)
\end{eqnarray*}
converges when $t$ goes to $\infty$ to
\begin{eqnarray*}
(2\pi)^{-d/2}\widehat{(z_1-z_2)}(\{0\}) \int_{\bbbr^d} (e^{\varphi(x)}-1)
dx.
\end{eqnarray*}
As we discussed in Example \ref{Exmean}, the assumption on $z_1-
z_2$ implies that $\mathrm{mean}(z_1-z_2) =0$ and $\widehat{z_1 - z_2}$ is a signed measure. Hence
\[
\lim_{t\to +\infty}\int_\Gamma
P^{\mathbf{X}}_{\pi_{z_1,t}}(d\gamma)\,e^{\langle\varphi,\gamma\rangle}=
\lim_{t\to +\infty}\int_\Gamma
P^{\mathbf{X}}_{\pi_{z_2,t}}(d\gamma)\,e^{\langle\varphi,\gamma\rangle}.
\]
By \cite[Theorem~4.2]{Ka76}), this is enough to show the required result.
\end{proof}

\medskip

Throughout this subsection, the study of the time asymptotic
behavior of the free Kawasaki process with an initial distribution
$\pi_z$ was based on the analysis of the Laplace transform
\begin{equation}
 \int_\Gamma
\pi_z(d\gamma)\,\mathbf{E}_\gamma[e^{\langle\varphi,\mathbf{X}_t\rangle}],\label{Natal1}
\end{equation}
cf.~Lemma~\ref{lemasympred}. Another way to interpret this is as follows: we have studied the time
asymptotic behavior of the so-called empirical field corresponding
to a $\varphi\in\mathcal{D}(\bbbr^d)$,
$$n_t(\varphi,\mathbf{X}) := \langle\varphi,\mathbf{X}_t\rangle=
\sum_{x \in \mathbf{X}_t}\varphi(x)$$ 
and showed that the limit $t \rightarrow \infty$ exists and identified the limiting distribution. The special role of the
empirical fields is essential in the next subsection.

\subsection{Hydrodynamic limits\label{SubSecHydro}}

In the sequel let $z\geq 0$ be a bounded measurable function. In
order to obtain a macroscopic description of our system, we rescale
simultaneously the empirical field $n_t(\varphi,\mathbf{X})=\langle
\varphi, \mathbf{X}_t \rangle$,
$\varphi\in\mathcal{D}(\bbbr^d)$, in space and in time. The
scale transformation in space is given by
$\langle\varphi,\gamma\rangle\to\varepsilon^d\langle\varphi(\varepsilon\cdot),\gamma\rangle$,
and in time by $t\to\varepsilon^{-\kappa}t$ for some $\kappa
>0$. To obtain non-trivial macroscopic density profiles, one has to
scale the initial intensity as well, $z\to z(\varepsilon\cdot)$.
This scaling yields a scaling of the Laplace transform of the
empirical field, in other words, the Laplace transform of the
one-dimensional distribution of the scaled process. For each $t\geq
0$ and each $\varphi\in\mathcal{D}(\bbbr^d)$ we obtain from
(\ref{KF6b}) the following form
\begin{eqnarray}
&&\int_\Gamma \pi_{z(\varepsilon\cdot)}(d\gamma)\,
\mathbf{E}_\gamma\left[e^{\varepsilon^d\left\langle\varphi(\varepsilon\cdot),\mathbf{X}_{\varepsilon^{-\kappa}t}\right\rangle}\right]\nonumber \\
&=&\int_\Gamma \pi_{z(\varepsilon\cdot)}(d\gamma)\,
e_B\left(e^{\varepsilon^{-\kappa}tA}\left(e^{\varepsilon^d\varphi(\varepsilon\cdot)} -1\right),\gamma\right)\label{Natal2} \\
&=&\exp\left( \int_{\bbbr^d} dx\,
\left(e^{\varepsilon^{-\kappa}tA}\left(e^{\varepsilon^d\varphi(\varepsilon\cdot)}-1\right)\right)(x)
z(\varepsilon x)\right).\nonumber
\end{eqnarray}
 In the sequel we
denote the scaled empirical field by
\begin{equation}
\label{eqReHy} n_t^{(\varepsilon)}(\varphi,\mathbf{X}) :=
n_{\varepsilon^{-\kappa}t}(\varepsilon^d\varphi(\varepsilon
\cdot),\mathbf{X})=
\varepsilon^d\left\langle\varphi(\varepsilon\cdot),
\mathbf{X}_{\varepsilon^{-\kappa}t}\right\rangle.
\end{equation}
According to the independent movement of the particles, we are again
able to reduce the study of the infinite particle system to an
effective pseudo-one-particle system. Again technical difficulties arise
from the fact that the scale of the system size is much larger than
the scale of space and time considered in the empirical field.
Actually, the system size is infinite. Under the additional
assumption that the  Fourier transform of the activity $z$ is a
signed measure, the hydrodynamic limit can be derived rather directly
using Fourier techniques, cf.~Propositions~\ref{PropHyd} and
\ref{PropHydW}. The general case of just bounded activities requires
more technical involved considerations (postponed to
Proposition~\ref{Prohydrogen}).

\begin{proposition}
\label{PropHyd}
Let $z\geq 0$ be a bounded measurable function such that its Fourier transform
is a signed measure. For each $t\geq 0$ the following limit exists for all
$\varphi\in\mathcal{D}(\bbbr^d)$
\begin{equation}
\label{EqHydLap} \lim_{\varepsilon\to
0^+}\int_\Theta\pi_{z(\varepsilon \cdot)}(d\gamma)\,
\mathbf{E}_\gamma\left[e^{n^{(\varepsilon)}_t(\varphi,\mathbf{X})}\right]
= \int_{\mathcal{D}'(\bbbr^d)} \delta_{\rho_t}(d\omega)
e^{\langle \varphi, \omega \rangle}
\end{equation}
whenever one of the following conditions is fulfilled:
\begin{enumerate}
\item If
\[
a^{(1)}_i := \int_{\bbbr^d}dx\,x_i a(x)<\infty,\quad \forall\,i=1,\cdots ,d,
\]
and $a^{(1)}:=(a^{(1)}_1,\cdots,a^{(1)}_d)\neq 0$, then for
$\kappa=1$ the limiting density $\rho_t$ is given by
\[
\int_{\bbbr^d}dx\,\rho_t(x)\varphi (x) :=
\int_{\bbbr^d}dx\,z(x + ta^{(1)})\varphi (x), \quad \varphi \in
\mathcal{D}(\bbbr^d);
\]
\item If $a^{(1)}= 0$, and
\[
a^{(2)}_{ij} := \int_{\bbbr^d}dx\,x_i x_j a(x)<\infty,\quad \forall\,i,j=1,\cdots ,d,
\]
then for $\kappa=2$ the limiting density $\rho_t$ is given by
\[
\int_{\bbbr^d}dx\,\rho_t(x)\varphi (x)
:=\frac{1}{(2\pi)^{d/2}}\int_{\bbbr^d}dx\,z(x)
\int_{\bbbr^d}dk\,e^{i\langle k ,  x \rangle } e^{-\frac{t}{2}\langle
a^{(2)}k,k\rangle}\hat\varphi (k),  \quad \varphi \in
\mathcal{D}(\bbbr^d)
\]
where $a^{(2)}$ denotes the $d\times d$ matrix with coefficients
$a^{(2)}_{ij}$.
\end{enumerate}
\end{proposition}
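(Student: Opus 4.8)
The plan is to build on the closed form already recorded in~(\ref{Natal2}): the left-hand side of~(\ref{EqHydLap}) equals $\exp(I_\varepsilon)$, where
\[
I_\varepsilon:=\int_{\mathbb{R}^d}dx\,\Bigl(e^{\varepsilon^{-\kappa}tA}\bigl(e^{\varepsilon^d\varphi(\varepsilon\cdot)}-1\bigr)\Bigr)(x)\,z(\varepsilon x),
\]
while the right-hand side is merely the evaluation of the Dirac mass, namely $e^{\langle\varphi,\rho_t\rangle}$ with $\langle\varphi,\rho_t\rangle=\int_{\mathbb{R}^d}\rho_t(x)\varphi(x)\,dx$. By continuity of the exponential it therefore suffices to prove $I_\varepsilon\to\langle\varphi,\rho_t\rangle$ as $\varepsilon\to0^+$.

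First I would linearise the inner exponential. From the elementary bound $|e^u-1-u|\le\frac{1}{2}u^2e^{|u|}$, the function $e^{\varepsilon^d\varphi(\varepsilon\cdot)}-1-\varepsilon^d\varphi(\varepsilon\cdot)$ has $L^1(\mathbb{R}^d,dx)$ norm at most $\frac{1}{2}\varepsilon^{2d}e^{\varepsilon^d\|\varphi\|_u}\int dx\,\varphi(\varepsilon x)^2=\frac{1}{2}\varepsilon^{d}e^{\varepsilon^d\|\varphi\|_u}\|\varphi\|_{L^2}^2\to0$. Since $(e^{sA})_{s\ge0}$ is an $L^1(\mathbb{R}^d,dx)$-contraction (Proposition~\ref{PropSemiA}) and $z$ is bounded, replacing $e^{\varepsilon^d\varphi(\varepsilon\cdot)}-1$ by $\varepsilon^d\varphi(\varepsilon\cdot)$ changes $I_\varepsilon$ by $o(1)$; so it is enough to handle
\[
\tilde I_\varepsilon:=\varepsilon^d\int_{\mathbb{R}^d}dx\,\bigl(e^{\varepsilon^{-\kappa}tA}\varphi(\varepsilon\cdot)\bigr)(x)\,z(\varepsilon x).
\]

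Next, exactly as in the proof of Lemma~\ref{LemasyA}, Parseval's formula combined with Proposition~\ref{KFProp1} and the scaling identity $\widehat{\varphi(\varepsilon\cdot)}(k)=\varepsilon^{-d}\hat\varphi(k/\varepsilon)$ turns $\tilde I_\varepsilon$ into the single integral
\[
\tilde I_\varepsilon=\int_{\mathbb{R}^d}\hat z(dk)\,e^{\,t\varepsilon^{-\kappa}(2\pi)^{d/2}(\hat a(-\varepsilon k)-\hat a(0))}\,\hat\varphi(-k),
\]
so everything reduces to the pointwise asymptotics of the symbol
\[
\sigma_\varepsilon(k):=\varepsilon^{-\kappa}(2\pi)^{d/2}\bigl(\hat a(-\varepsilon k)-\hat a(0)\bigr)=\varepsilon^{-\kappa}\int_{\mathbb{R}^d}dx\,a(x)\bigl(e^{i\varepsilon\langle k,x\rangle}-1\bigr).
\]
In Case~1 ($\kappa=1$), from $|\varepsilon^{-1}(e^{i\varepsilon\theta}-1)|\le|\theta|$ and $|\langle k,x\rangle|\le|k|\,|x|\in L^1(\mathbb{R}^d,a\,dx)$ (first-moment hypothesis), dominated convergence gives $\sigma_\varepsilon(k)\to i\langle k,a^{(1)}\rangle$. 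In Case~2 ($\kappa=2$, $a^{(1)}=0$) I would first use $a^{(1)}=0$ to cancel the linear term, writing $e^{i\varepsilon\theta}-1=i\varepsilon\theta+(e^{i\varepsilon\theta}-1-i\varepsilon\theta)$ so that $\int a(x)\,i\varepsilon\langle k,x\rangle\,dx=i\varepsilon\langle k,a^{(1)}\rangle=0$; then $|\varepsilon^{-2}(e^{i\varepsilon\theta}-1-i\varepsilon\theta)|\le\frac{1}{2}\theta^2\le\frac{1}{2}|k|^2|x|^2\in L^1(\mathbb{R}^d,a\,dx)$ (second-moment hypothesis), and dominated convergence gives $\sigma_\varepsilon(k)\to-\frac{1}{2}\langle a^{(2)}k,k\rangle$. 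In both cases $|e^{t\sigma_\varepsilon(k)}|\le1$ by Remark~\ref{Natal6}, and $|\hat\varphi(-k)|$ is integrable against the tempered signed measure $|\hat z|$ because $\hat\varphi$ decays faster than any inverse polynomial; hence a second dominated convergence argument passes the limit through the $k$-integral.

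This produces $\tilde I_\varepsilon\to\int\hat z(dk)\,e^{it\langle k,a^{(1)}\rangle}\hat\varphi(-k)$ in Case~1 and $\tilde I_\varepsilon\to\int\hat z(dk)\,e^{-\frac{t}{2}\langle a^{(2)}k,k\rangle}\hat\varphi(-k)$ in Case~2. Running Parseval backwards identifies the first limit with $\int dx\,z(x+ta^{(1)})\varphi(x)$ (the factor $e^{it\langle k,a^{(1)}\rangle}$ corresponding under Fourier transform to the shift $z(\cdot)\mapsto z(\cdot+ta^{(1)})$), and the second with the Fourier expression defining $\rho_t$ in the statement; in both cases this is $\langle\varphi,\rho_t\rangle$. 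Together with the linearisation step this gives $I_\varepsilon\to\langle\varphi,\rho_t\rangle$, i.e.~(\ref{EqHydLap}). I expect the only genuinely delicate point to be the symbol asymptotics under the mere first- (resp.\ second-) moment assumption: one cannot Taylor-expand $\hat a$ to the needed order pointwise, so the expansion of $e^{i\varepsilon\langle k,x\rangle}-1$ must be carried out under the integral sign with the moment bound playing the role of the dominating function, and in Case~2 one must subtract the (vanishing) linear term before dividing by $\varepsilon^2$. Everything else is a routine combination of $L^1$-contractivity of $(e^{sA})_{s\ge0}$, Parseval's formula, and dominated convergence.
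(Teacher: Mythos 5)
Your proposal is correct and follows essentially the same route as the paper: linearise $e^{\varepsilon^d\varphi(\varepsilon\cdot)}-1$ using $L^1$-contractivity of $(e^{sA})_{s\geq 0}$ and boundedness of $z$, pass to Fourier variables via Parseval and (\ref{Natal5}), dominate by $|\hat\varphi|\in L^1(|\hat z|)$ using $\mathrm{Re}(\hat a(k)-\hat a(0))\leq 0$, and compute the pointwise limit of $\varepsilon^{-\kappa}(\hat a(-\varepsilon k)-\hat a(0))$ under the moment hypotheses. Your Case~2 treatment (subtracting the vanishing linear term and dominating $|e^{i\theta}-1-i\theta|$ by $\theta^2/2$ before applying dominated convergence) is in fact a slightly more careful version of the paper's Taylor-expansion step.
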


\begin{proof}
By (\ref{Natal2}), for each $t\geq 0$ and each
$\varphi\in\mathcal{D}(\bbbr^d)$, one has
\begin{eqnarray}
&&\int_\Gamma\pi_{z(\varepsilon
\cdot)}(d\gamma)\,\mathbf{E}_\gamma\left[
e^{n^{(\varepsilon)}_t(\varphi,\mathbf{X})}\right]\nonumber \\
&=& \exp\left( \int_{\bbbr^d} dx\,
e^{\varepsilon^{-\kappa}tA}(e^{\varepsilon^d\varphi(\varepsilon\cdot)}-1)(x)\  z(\varepsilon x)\right)\nonumber \\
&=&\exp\left( \int_{\bbbr^d} dx\,
e^{\varepsilon^{-\kappa}tA}\Big(e^{\varepsilon^d\varphi(\varepsilon\cdot)}-1
-\varepsilon^d\varphi(\varepsilon\cdot)\Big)(x)\ z(\varepsilon x)\right)\cdot\label{Natal4}\\
&&\cdot\exp\left(\int_{\bbbr^d} dx\,\varepsilon^dz(\varepsilon
x)
\left(e^{\varepsilon^{-\kappa}tA}\varphi(\varepsilon\cdot)\right)\left(x
\right)\right).\label{Natal3}
\end{eqnarray}
Concerning (\ref{Natal4}), we observe that due to the contractivity
property of the semigroup $(e^{tA})_{t\geq 0}$ in
$L^1(\bbbr^d,dx)$ (cf.~Proposition \ref{PropSemiA}) we have
\begin{eqnarray}
&& \left|\int_{\bbbr^d}dx\, e^{\varepsilon^{-\kappa}tA}
\Big(e^{\varepsilon^{d}\varphi(\varepsilon \cdot)} -1 -
\varepsilon^d\varphi(\varepsilon \cdot)\Big)( x)
z(\varepsilon x )\right| \label{Eq6.2}\\
&\leq&\| e^{\varepsilon^d\varphi(\varepsilon \cdot)} -1 -
\varepsilon^d\varphi(\varepsilon \cdot)\|_{L^{1}(\bbbr^d,dx)}
\|z \|_u  \nonumber \\
&\leq& \varepsilon^d\| \varphi\|^2_{L^1(\bbbr^d,dx)} e^{\|
\varphi\|_{L^{1}(\bbbr^d,dx)}}\|z \|_u, \nonumber
\end{eqnarray}
for more details see Lemma \ref{lemmHyd} in the
Appendix~\ref{appest} below. Thus, the proof reduces to check the
existence of the limit of the exponential (\ref{Natal3}) when
$\varepsilon$ converges to 0. For this purpose one shall apply the
Parseval formula to the exponent in (\ref{Natal3}). The use of the
explicit formula (\ref{Natal5}) for the semigroup $(e^{tA})_{t\geq
0}$ then yields
\begin{equation}
\label{eqonefour} \int_{\bbbr^d}\hat
z(dk)\,e^{\varepsilon^{-\kappa}t(2\pi)^{d/2}(\hat{a}(-\varepsilon
k)-\hat{a}(0))} \hat\varphi (-k).
\end{equation}
Here, observe that since for all $k\in\bbbr^d$,
$\mathrm{Re}(\hat a(k)-\hat a(0)) \leq 0$ (cf.~Remark \ref{Natal6}),
one has
\[
\left|e^{\varepsilon^{-\kappa}t(2\pi)^{d/2}(\hat a(-\varepsilon
k)-\hat a(0))} \hat\varphi (-k)\right|\leq
\left|\hat\varphi(-k)\right|,\quad \forall\,k\in\bbbr^d,
\varepsilon >0,
\]
where, in particular, $\hat\varphi\in L^1(\bbbr^d,\hat z)$.
Therefore, due to the Lebesgue dominated convergence theorem, one
may infer the existence of the required limit from the existence of
the limit
\begin{equation}
\label{eq6.10} \lim_{\varepsilon\to 0^+}\frac{\hat a(-\varepsilon
k)-\hat a(0)}{\varepsilon^\kappa}
\end{equation}
for each $k\in\bbbr^d$.

\medskip

\noindent Case 1: Let $\kappa=1$. Then, under the assumptions stated
in 1, the function $\hat a$ is differentiable at 0, and thus
(\ref{eq6.10}) exists with
\[
\lim_{\varepsilon\to 0^+}\frac{\hat{a}(-\varepsilon
k)-\hat{a}(0))}{\varepsilon} = \frac{i}{(2\pi)^{d/2}} \langle k , a^{(1)} \rangle
\]
for each $k\in\bbbr^d$. Hence the limit (\ref{Natal3}) exists,
being equal to
\begin{eqnarray*}
\int_{\bbbr^d}\hat z(dk)\,e^{it \langle k,  a^{(1)}\rangle }\hat\varphi (-k)
%&=&\frac{1}{(2\pi)^{d/2}}\int_{\bbbr^d}dx\,z(x)\int_{\bbbr^d}dk\,
%e^{i\langle k, (-x+ta^{(1)})\rangle }\hat\varphi (-k)\\
%&=&\int_{\bbbr^d}dx\,z(x)\varphi (x-ta^{(1)})\\
&=&\int_{\bbbr^d}dx\,z(x + ta^{(1)})\varphi (x), \quad \varphi
\in \mathcal{D}(\bbbr^d).
\end{eqnarray*}
Therefore, the Laplace transform of the rescaled empirical field
converges to
\[
\exp\left(\int_{\bbbr^d}dx\,z(x + ta^{(1)})\varphi (x)\right).
\]
This means that the limiting
distribution is the Dirac measure with mass at $z(x + ta^{(1)})$.

 \noindent Case 2: Now let $\kappa=2$. Then (\ref{eq6.10}) can be
 written as
\begin{eqnarray*}
\frac{\hat{a}(-\varepsilon k)-\hat{a}(0)}{\varepsilon^2}
%&=&\frac{1}{(2\pi)^{d/2}}\int_{\bbbr^d}dx\,a(x)
%\frac{e^{i\varepsilon \langle x , k \rangle } -1}{\varepsilon^2}\\
&=&\frac{1}{(2\pi)^{d/2}}\int_{\bbbr^d}dx\,a(x)
\left(\frac{i}{\varepsilon} \langle x ,  k \rangle  -
\frac{1}{2}\sum_{i,j=1}^dx_ix_jk_ik_j
+ \frac{o(\varepsilon^2)}{\varepsilon^2}\right)\\
%&=&\frac{1}{(2\pi)^{d/2}}\int_{\bbbr^d}dx\,a(x)
%\left(-\frac{1}{2}\sum_{i,j=1}^dx_ix_jk_ik_j
%+ \frac{o(\varepsilon^2)}{\varepsilon^2}\right),
\end{eqnarray*}
where we have used the Taylor expansion of the function $e^{i\langle x,
k\rangle }$ at $x=0$ and the assumptions for the Case~2. Since $a\in
L^1(\bbbr^d,dx)$, the latter converges to
$-1/2(2\pi)^{-d/2}\langle a^{(2)}k,k\rangle$, and thus as $\mathrm{Re}(\hat a(k) -\hat a(0))\leq 0$ by (\ref{Ze}) we obtain the result. 
%\begin{eqnarray*}
%&&\lim_{\varepsilon\to 0^+}\int_\Gamma\pi_{z(\varepsilon
%\cdot)}(d\gamma)\,\mathbf{E}_\gamma\left[
%e^{n^{(\varepsilon)}_t(\varphi,\mathbf{X})}\right]\\
%&=&\exp\left(\int_{\bbbr^d}\hat z(dk)\,
%e^{-\frac{t}{2}\langle a^{(2)}k,k\rangle}\hat\varphi (-k)\right)\\
%&=&\exp\left(\frac{1}{(2\pi)^{d/2}}\int_{\bbbr^d}dx
%z(x)\int_{\bbbr^d}dk e^{-i\langle k, x \rangle } e^{-\frac{t}{2}\langle
%a^{(2)}k,k\rangle} {\hat\varphi}(-k) \right).
%\end{eqnarray*}
\end{proof}

\begin{remark}
\label{Natal8} The limiting
density $\rho_t(x)=z(x + ta^{(1)})$ obtained in Proposition
\ref{PropHyd} is the solution of the linear partial
differential equation $\frac{\partial}{\partial
t}\rho_t(x)=\langle a^{(1)} ,\nabla\rho_t(x) \rangle =
\mathrm{div}(a^{(1)}\rho_t(x))$ with the initial condition
$\rho_0=z$. In the same way, the second case stated in Proposition
\ref{PropHyd} yields a limiting density which is the solution
of the heat equation
\[
\frac{\partial}{\partial
t}\rho_t(x)=\frac12\sum_{i,j=1}^{d}a_{ij}^{(2)}
\frac{\partial^2}{\partial x_i\partial x_j}\rho_t(x)
\]
with the same initial condition.
\end{remark}

Given the function $a$, we may decompose $a$ into a sum of an even
function $p$ and an odd function $q$, $a=p+q$. Note that one always
has $p^{(1)}= 0$. Submitting also the function $a$ itself to a proper scale
transformation (beyond the previous scale transformations in space
and in time), one may complement the statement of Proposition
\ref{PropHyd}. The limit (\ref{eq6.10}) required in Proposition
\ref{PropHyd} suggests the scaling $a_\varepsilon:=p+\varepsilon q$
and $\kappa=2$. Note that the assumptions on the function $a$ (i.e.,
$0\leq a\in L^1(\bbbr^d,dx)$), carry over to $a_\varepsilon$,
$\varepsilon
>0$. Hence all considerations done in Section \ref{SecDefKa} and
 following sections for the one-particle operator $A$ and its
underlying dynamics still hold for the operator $A_\varepsilon$
defined by (\ref{OnePartOp}) with $a$ replaced by $a_\varepsilon$.
Denote by $$a^{(1)} := \int_{\bbbr^d}dx\,x a(x) =
\int_{\bbbr^d}dx\,x q(x)=
\varepsilon^{-1}\int_{\bbbr^d}dx\,x a_\varepsilon(x) .$$

\begin{proposition}(``weak asymmetry'')
\label{PropHydW} Let $z\geq 0$ be a bounded measurable function such
that its Fourier transform is a signed measure. Under the above
conditions, if $0\not= q^{(1)}=a^{(1)}\in \bbbr^d$ and
$p^{(2)}_{ij}<\infty$ for every $i,j=1,\cdots ,d$, then for each
$t\geq 0$ and each $\varphi\in\mathcal{D}(\bbbr^d)$ the
following limit exists, and it is given by
\begin{eqnarray*}
&&\lim_{\varepsilon\to 0^+}\int_\Gamma\pi_{z(\varepsilon \cdot)}(d\gamma)\,
\mathbf{E}_\gamma[e^{n^{(\varepsilon)}_t(\varphi,\mathbf{X})}]\\
&=&\exp\left(\frac{1}{(2\pi)^{d/2}}\int_{\bbbr^d}dx\,z(x)
\int_{\bbbr^d}dk\,e^{i \langle k, x\rangle } e^{-it\langle a^{(1)},k\rangle
- \frac{t}{2}\langle a^{(2)}k,k\rangle}\hat\varphi (k)\right).
\end{eqnarray*}
\end{proposition}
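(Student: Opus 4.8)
The plan is to repeat, almost verbatim, the proof of Proposition~\ref{PropHyd} with the family of rates $a_\varepsilon=p+\varepsilon q$ in place of $a$ and with $\kappa=2$. The first thing I would check is that $a_\varepsilon$ still satisfies all the standing hypotheses: since $a\ge 0$ one has $a_\varepsilon(x)=\frac{1+\varepsilon}{2}a(x)+\frac{1-\varepsilon}{2}a(-x)\ge 0$ for $\varepsilon\in[0,1]$, $a_\varepsilon\in L^1(\mathbb{R}^d,dx)$ with $\int_{\mathbb{R}^d}a_\varepsilon\,dx=\int_{\mathbb{R}^d}a\,dx=a^{(0)}$ (as $q$ is odd), and the pointwise bound (\ref{eqbnda}) carries over to $a_\varepsilon$ up to enlarging the constant. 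Hence Propositions~\ref{PropSemiA} and \ref{KFProp1}, Remark~\ref{Natal6}, the identities (\ref{KF6b}) and (\ref{Natal2}) and the estimate (\ref{Eq6.2}) all hold with the one-particle operator $A_\varepsilon$ associated with $a_\varepsilon$; in particular $(e^{tA_\varepsilon})_{t\ge 0}$ is an $L^1(\mathbb{R}^d,dx)$-contraction and $\mathrm{Re}\big(\hat a_\varepsilon(k)-\hat a_\varepsilon(0)\big)\le 0$ for all $k$.

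Next I would expand the left-hand side exactly as in (\ref{Natal2})--(\ref{Natal3}), with $A_\varepsilon$ throughout. The ``nonlinear remainder'' factor corresponding to (\ref{Natal4}) is still $O(\varepsilon^d)$ by the $L^1$-contractivity of $e^{\varepsilon^{-2}tA_\varepsilon}$ together with Lemma~\ref{lemmHyd}, and hence tends to $1$. So it remains to pass to the limit in the factor (\ref{Natal3}) with $A_\varepsilon$; applying Parseval and the explicit semigroup formula (\ref{Natal5}) for $e^{tA_\varepsilon}$ turns this into
\[
\lim_{\varepsilon\to 0^+}\int_{\mathbb{R}^d}\hat z(dk)\,e^{\varepsilon^{-2}t(2\pi)^{d/2}(\hat a_\varepsilon(-\varepsilon k)-\hat a_\varepsilon(0))}\,\hat\varphi(-k).
\]
Because $\mathrm{Re}\big(\hat a_\varepsilon(-\varepsilon k)-\hat a_\varepsilon(0)\big)\le 0$, the integrand is dominated by $|\hat\varphi(-k)|\in L^1(\mathbb{R}^d,|\hat z|)$, so by dominated convergence everything reduces to the pointwise limit $\lim_{\varepsilon\to 0^+}\varepsilon^{-2}\big(\hat a_\varepsilon(-\varepsilon k)-\hat a_\varepsilon(0)\big)$.

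This pointwise computation is the one genuinely new step. Writing $\hat a_\varepsilon=\hat p+\varepsilon\hat q$ and using $\hat q(0)=0$ (as $q$ is odd) splits the quotient into $\varepsilon^{-2}(\hat p(-\varepsilon k)-\hat p(0))+\varepsilon^{-1}\hat q(-\varepsilon k)$. For the first summand, $\hat p$ is even so $\nabla\hat p(0)=0$ and, since $p$ has finite second moments $p^{(2)}_{ij}$, a second-order Taylor expansion (exactly as in Case~2 of Proposition~\ref{PropHyd}) gives the limit $-\frac{1}{2}(2\pi)^{-d/2}\langle p^{(2)}k,k\rangle$. For the second summand, $q$ has finite first moment $q^{(1)}=a^{(1)}$, so $\hat q$ is differentiable at $0$ with $\nabla\hat q(0)=-i(2\pi)^{-d/2}a^{(1)}$, whence $\varepsilon^{-1}\hat q(-\varepsilon k)\to i(2\pi)^{-d/2}\langle k,a^{(1)}\rangle$; it is precisely this matching of the $\varepsilon q$-scaling with the $\varepsilon^{-2}$ time-scaling that converts the would-be blow-up of the asymmetric part into a surviving first-order drift. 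Since $q$ odd forces $\int_{\mathbb{R}^d}x_ix_j q(x)\,dx=0$, one has $p^{(2)}=a^{(2)}$, so the pointwise limit equals $(2\pi)^{-d/2}\big(i\langle k,a^{(1)}\rangle-\frac{1}{2}\langle a^{(2)}k,k\rangle\big)$. Plugging this back, rewriting the resulting integral against $\hat z$ by Parseval and (\ref{Natal5}) just as in Cases~1 and 2 of Proposition~\ref{PropHyd}, and finally exponentiating, yields the asserted formula.

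I do not expect a serious obstacle here: the only points requiring care are the bookkeeping of the orders in $\varepsilon$ under the combined space--time--rate scaling, and the (routine) observation that the $o(\varepsilon^2)$ Taylor remainders are harmless — which they are, since the $\hat z$-integral has been dominated once and for all by $|\hat\varphi(-k)|$ and the remainders only need to vanish pointwise in $k$.
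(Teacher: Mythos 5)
Your proposal is correct and follows essentially the same route as the paper: one checks that $a_\varepsilon=p+\varepsilon q$ inherits all standing hypotheses, reduces via the argument of Proposition~\ref{PropHyd} (Parseval, the dominating bound from $\mathrm{Re}(\hat a_\varepsilon(-\varepsilon k)-\hat a_\varepsilon(0))\le 0$, and dominated convergence) to the pointwise limit of $\varepsilon^{-2}(\hat a_\varepsilon(-\varepsilon k)-\hat a_\varepsilon(0))$, and then splits this into the $\varepsilon^{-2}(\hat p(-\varepsilon k)-\hat p(0))$ and $\varepsilon^{-1}\hat q(-\varepsilon k)$ contributions. The paper's own proof is just a terser version of exactly this; your added details (non-negativity of $a_\varepsilon$, $\hat q(0)=0$, $\nabla\hat p(0)=0$, $p^{(2)}=a^{(2)}$) are the correct fillings of what it leaves implicit.
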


\begin{proof} Since all statements for the operator $A$ and the corresponding process
also hold for $A_\varepsilon$, in particular, one finds that
$(e^{tA_\varepsilon})_{t\geq 0}$ is also a contraction semigroup on
$L^1(\bbbr^d,dx)$. Similar arguments as in the proof of
Proposition~\ref{PropHyd} reduce the proof to the analysis of
existence of the limit
\[
\lim_{\varepsilon\to 0^+}\frac{\widehat{a_\varepsilon}(-\varepsilon
k)- \widehat{a_\varepsilon}(0)}{\varepsilon^2}= \lim_{\varepsilon\to
0^+}\frac{\hat p(-\varepsilon k)-\hat p(0)}{\varepsilon^2}
+\lim_{\varepsilon\to 0^+}\frac{\hat q(-\varepsilon k)-\hat
q(0)}{\varepsilon},\quad k\in\bbbr^d.
\]
\end{proof}

\begin{remark}
\label{remweaka} Similarly to Remark~\ref{Natal8}, one may then
conclude that for continuous differentiable $z$ Proposition
\ref{PropHydW} leads to a limiting density $\rho_t$ which is a
solution of the partial differential equation
\[
\frac{\partial}{\partial t}\rho_t(x)=\mathrm{div}(a^{(1)}\rho_t(x))+
\frac12\sum_{i,j=1}^{d}a_{ij}^{(2)}\frac{\partial^2}{\partial
x_i\partial x_j}\rho_t(x)
\]
 with the initial condition $\rho_0=z$.
\end{remark}

\begin{proposition}
\label{Prohydrogen} Assume that $a$ has all moments finite. Then the
results stated in Propositions~\ref{PropHyd} and \ref{PropHydW} hold
for all non-negative bounded measurable functions $z$.
\end{proposition}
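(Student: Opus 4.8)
The plan is to keep the first half of the proof of Proposition~\ref{PropHyd} unchanged — up to the point where Parseval's formula is applied to $z$, that part never uses the hypothesis on $\hat z$ — and then to replace the step ``Parseval in the $z$-variable plus dominated convergence in $k$'' (which genuinely needs $\hat z$ to be a \emph{finite} signed measure) by a dual argument that moves the Fourier transform onto $\varphi$ and uses only boundedness of $z$ together with strong information on the rescaled semigroup kernels. Concretely, as in (\ref{Natal4})--(\ref{Eq6.2}), split the exponent $\int_{\mathbb{R}^d}dx\,e^{\varepsilon^{-\kappa}tA}(e^{\varepsilon^d\varphi(\varepsilon\cdot)}-1)(x)\,z(\varepsilon x)$ into the second-order remainder, which is $O(\varepsilon^d)$ uniformly by $L^1(\mathbb{R}^d,dx)$-contractivity of $(e^{tA})_{t\ge0}$ (Proposition~\ref{PropSemiA}) and $\|z\|_u<\infty$, and the linear part $\int_{\mathbb{R}^d}dx\,\varepsilon^d z(\varepsilon x)\,(e^{\varepsilon^{-\kappa}tA}\varphi(\varepsilon\cdot))(x)$; this splitting does not see the structure of $z$. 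Writing $e^{sA}=\mu_s*(\cdot)$ for the kernel $\mu_s$ of Proposition~\ref{KFProp1} and changing variables $y=\varepsilon x$, $u=\varepsilon w$, the linear part becomes $\int_{\mathbb{R}^d}dy\,z(y)\,g_\varepsilon(y)$ with $g_\varepsilon:=\varphi*\mu^{(\varepsilon)}$, where $\mu^{(\varepsilon)}$ is the image of the probability measure $\mu_{\varepsilon^{-\kappa}t}$ under the dilation $w\mapsto\varepsilon w$; thus $\mu^{(\varepsilon)}$ is a probability measure with $\widehat{\mu^{(\varepsilon)}}(k)=(2\pi)^{-d/2}\exp\bigl(\varepsilon^{-\kappa}t(2\pi)^{d/2}(\hat a(\varepsilon k)-\hat a(0))\bigr)$.

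The analytic heart is to show that $\mu^{(\varepsilon)}$ converges weakly, as $\varepsilon\to0^+$, to $\mu^{(0)}=\delta_{t a^{(1)}}$ in Case~1, to the centered Gaussian $\mathcal{N}(0,t a^{(2)})$ in Case~2, and to $\mathcal{N}(t a^{(1)},t a^{(2)})$ in the weakly asymmetric setting of Proposition~\ref{PropHydW} (there using $\hat a_\varepsilon=\hat p+\varepsilon\hat q$ with $\kappa=2$ and $q^{(2)}=0$). This follows from the Lévy continuity theorem: finiteness of the moments of $a$ makes $\hat a$ smooth near $0$ with $\nabla\hat a(0)=-i(2\pi)^{-d/2}a^{(1)}$ and Hessian $-(2\pi)^{-d/2}a^{(2)}$, so $\widehat{\mu^{(\varepsilon)}}(k)$ converges pointwise to the characteristic function of $\mu^{(0)}$, while $|\widehat{\mu^{(\varepsilon)}}|\le(2\pi)^{-d/2}$ (by Remark~\ref{Natal6}) and continuity of the limit at $0$ close the argument.

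From weak convergence I extract two facts. First, $g_\varepsilon(y)=\int\varphi(y-u)\mu^{(\varepsilon)}(du)\to\int\varphi(y-u)\mu^{(0)}(du)=:g(y)$ for every $y$ (since $\varphi\in C_b(\mathbb{R}^d)$), and $\|g_\varepsilon\|_u\le\|\varphi\|_u$. Second, $\{\mu^{(\varepsilon)}\}$ is tight, so with $\supp\varphi\subset B(R_0)$ one gets $\int_{|y|>L}|g_\varepsilon(y)|\,dy\le\|\varphi\|_{L^1}\,\mu^{(\varepsilon)}(\{|u|>L-R_0\})\to0$ as $L\to\infty$, uniformly in $\varepsilon\in(0,1]$. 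Here the moment hypothesis on $a$ enters, making this quantitative: through the compound-Poisson structure $X_s=Y_{Z_s}$, with $Z_s$ Poissonian of parameter $a^{(0)}s$, one has $\int|u|^m\mu^{(\varepsilon)}(du)=\varepsilon^m E_0\bigl[|X_{\varepsilon^{-\kappa}t}|^m\bigr]$, and since the $m$-th moment of $X_s$ grows like $s^m$ when $a^{(1)}\neq0$ and like $s^{m/2}$ when $a^{(1)}=0$ — matching the scaling exponent $\kappa$ exactly — these are bounded uniformly in $\varepsilon$ for every $m$, whence $\mu^{(\varepsilon)}(\{|u|>R\})\le C_m R^{-m}$ uniformly.

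Combining: splitting $\int z\,g_\varepsilon$ over $B(L)$ and its complement, the tail is $\le\|z\|_u\|\varphi\|_{L^1}\mu^{(\varepsilon)}(\{|u|>L-R_0\})$ uniformly in $\varepsilon$, while on the fixed ball $B(L)$ dominated convergence (bound $\|z\|_u\|\varphi\|_u$, pointwise limit as above) gives $\int_{B(L)}z\,g_\varepsilon\to\int_{B(L)}z\,g$; letting $\varepsilon\to0$ and then $L\to\infty$ yields $\int z\,g_\varepsilon\to\int z\,g$. Finally one identifies $\int_{\mathbb{R}^d}z(y)g(y)\,dy$ with the required exponent: in Case~1, $g(y)=\varphi(y-t a^{(1)})$ and the substitution $y\mapsto y+t a^{(1)}$ produces $\int z(y+t a^{(1)})\varphi(y)\,dy$; in Case~2 and the weakly asymmetric case $\widehat g=\hat\varphi\cdot(2\pi)^{d/2}\widehat{\mu^{(0)}}$, so $g(y)=(2\pi)^{-d/2}\int e^{i\langle k,y\rangle}e^{-it\langle a^{(1)},k\rangle-\frac t2\langle a^{(2)}k,k\rangle}\hat\varphi(k)\,dk$ (with $a^{(1)}=0$ in Case~2), which upon pairing with $z$ reproduces the integrals of Propositions~\ref{PropHyd} and \ref{PropHydW}; exponentiating gives (\ref{EqHydLap}) and its weakly asymmetric analogue. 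The main obstacle is the second fact above: passing to the limit in the pairing of the spread-out kernels $g_\varepsilon=\varphi*\mu^{(\varepsilon)}$ with a merely bounded, possibly non-decaying $z$ — this is precisely the difficulty created by dropping the Fourier-side assumption on $z$, and it is what the finiteness of all moments of $a$ is there to overcome.
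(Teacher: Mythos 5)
Your proof is correct, but the key analytic step is genuinely different from the paper's. The reduction — splitting off the second‑order remainder, which is $O(\varepsilon^d)$ by $L^1$‑contractivity and $\|z\|_u<\infty$, and isolating the linear term $\int z\,g_\varepsilon$ with $g_\varepsilon=\varphi*\mu^{(\varepsilon)}$ — is the paper's starting point too. From there the paper approximates $\varphi$ in $L^1(\mathbb{R}^d,dx)$ by functions whose Fourier transform lies in $\mathcal{D}(\mathbb{R}^d)$ (the error controlled by $\|z\|_u\,\|\varphi-\psi\|_{L^1}$) and then proves that $g_\varepsilon$ converges in $L^1(\mathbb{R}^d,dx)$ by showing that its Fourier transform $e^{\varepsilon^{-\kappa}t(2\pi)^{d/2}(\hat a_\varepsilon(-\varepsilon k)-\hat a(0))}\hat\varphi(-k)$ converges in $\mathcal{S}(\mathbb{R}^d)$; this is where the all‑moments hypothesis enters (polynomial control of all derivatives of $\hat a_\varepsilon$) and where the norm estimates of Appendix~\ref{appB} are used. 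You instead exploit the probabilistic structure: the $\mu^{(\varepsilon)}$ are probability measures, L\'evy's continuity theorem gives their weak convergence (and, as a by‑product, tightness), and the pairing with a merely bounded, non‑decaying $z$ is handled by the uniform tail estimate $\int_{|y|>L}|g_\varepsilon|\le\|\varphi\|_{L^1}\,\mu^{(\varepsilon)}(\{|u|>L-R_0\})$ together with dominated convergence on a fixed ball. This is sound and arguably more elementary: it bypasses the Schwartz‑space machinery and the density argument entirely, and it in fact only uses finitely many moments of $a$ (second moments already give the pointwise convergence of the characteristic functions, hence weak convergence and tightness; your Chebyshev bounds with arbitrary $m$ merely make the tails quantitative), so your route would even yield a slightly stronger statement. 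What the paper's harder argument buys is the stronger mode of convergence — $\hat g_\varepsilon\to\hat g$ in $\mathcal{S}(\mathbb{R}^d)$, hence $g_\varepsilon\to g$ in $L^1$ — which is reused verbatim in the proof of Theorem~\ref{ThHydGibbs}, where the limit is paired with $\rho_{\mu_\varepsilon}$ in the dual norm $\|\cdot\|_{0,-d-1,2}$; your weak‑convergence argument would not deliver that, though it fully suffices for Proposition~\ref{Prohydrogen} itself. The only point you assert rather than prove is the growth $E_0[|X_s|^m]=O(s^m)$ (resp.\ $O(s^{m/2})$ for centered jumps), but this is standard for compound Poisson processes (all cumulants are linear in $s$) and, as noted, not load‑bearing since tightness is already automatic.
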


\begin{proof}
We note that in the first part of the proof of
Propositions~\ref{PropHyd} and \ref{PropHydW} we have only used the
boundedness of $z$. Therefore, it remains to prove the convergence
of the exponent in (\ref{Natal3}), namely,
\begin{eqnarray}
\label{eq6.7} &&\int_{\bbbr^d} dx\,z(x)
\left(e^{\varepsilon^{-\kappa}tA_\varepsilon}\varphi(\varepsilon\cdot)\right)\left(\frac{x}{\varepsilon}\right).
\end{eqnarray}
Let $\psi\in L^1(\bbbr^d,dx)$ be given. Then
\begin{eqnarray*}
&&\left| \int_{\bbbr^d} dx\,z(x)
\left(e^{\varepsilon^{-\kappa}tA_\varepsilon}\varphi(\varepsilon\cdot)\right)\left(\frac{x}{\varepsilon}\right)
-\int_{\bbbr^d} dx\,z(x)
\left(e^{\varepsilon^{-\kappa}tA_\varepsilon}\psi(\varepsilon\cdot)\right)\left(\frac{x}{\varepsilon}\right)\right|\\
& \leq &\varepsilon^d \int_{\bbbr^d} dx\,|z(\varepsilon x)|
\left| e^{\varepsilon^{-\kappa}tA_\varepsilon} \left(
\varphi(\varepsilon\cdot)- \psi(\varepsilon\cdot)\right)(x)\right|\\
%& \leq &\|z\|_u \varepsilon^d \left\|
%e^{\varepsilon^{-\kappa}tA_\varepsilon} \left(
%\varphi(\varepsilon\cdot)-
%\psi(\varepsilon\cdot)\right)\right\|_{L^1(\bbbr^d,dx)}.
& \leq &\|z\|_u \varepsilon^d \int_{\bbbr^d} dx\left|
 \varphi(\varepsilon x)-
\psi(\varepsilon x)\right| = \|z\|_u \|\varphi
-\psi\|_{L^1(\bbbr^d,dx)},
\end{eqnarray*}
using that $(e^{tA_\varepsilon})_{t\geq 0}$ is a
$L^1(\bbbr^d,dx)$-contraction semigroup.

Therefore, it is enough to consider (\ref{eq6.7}) for $\varphi$ from
a total subset of $L^1(\bbbr^d,dx)$. Let us consider the set of
all functions in $L^1(\bbbr^d,dx)$ with Fourier
transform in $\mathcal{D}(\bbbr^d)$. This set is total in
$L^1(\bbbr^d,dx)$, because $\mathcal{D}(\bbbr^d)$ is dense
in $\mathcal{S}(\bbbr^d)$, the Fourier transform is continuous
in $\mathcal{S}(\bbbr^d)$, and $\mathcal{S}(\bbbr^d)$ is
dense in $L^1(\bbbr^d,dx)$. Let $\varphi$ be such a function.
In order to prove the convergence of (\ref{eq6.7}) it is enough to
show the convergence of
$(e^{\varepsilon^{-\kappa}tA_{\varepsilon}}\varphi(\varepsilon\cdot))(\frac{x}{\varepsilon})$
in $L^1(\bbbr^d,dx)$. For this purpose it is sufficient to show the following
convergence of the Fourier transform of the latter expression,
\begin{equation}
\label{eq6.8}
e^{\varepsilon^{-\kappa}t(2\pi)^{d/2}(\hat{a}_\varepsilon(-\varepsilon
k)-\hat{a}(0))} \hat\varphi (-k) \rightarrow e^{ it \langle
a^{(1)},k \rangle  -\frac{0^{2-\kappa}}{2} t \langle
a^{(2)}k,k\rangle } \hat\varphi (-k),\quad \varepsilon \rightarrow 0
\end{equation}
in $\mathcal{S}(\bbbr^d)$, actually in the Sobolev space $H^k(\bbbr^d)$ with $k > d/2$ is sufficient.
The exponent in the l.h.s. of (\ref{eq6.8}) may be
written as
\begin{eqnarray}
%\hat{a}_\varepsilon(-\varepsilon k) -\hat{a}(0) =
i  t \int_{\bbbr^d} \langle k, x \rangle q(x) dx
-\varepsilon^{2-\kappa} t \int_0^1 (1-s)
\int_{\bbbr^d} \langle k,x\rangle^2  e^{is\varepsilon \langle
k,x\rangle } a_\varepsilon(x) dx ds. \label{Eq6.1}
\end{eqnarray}
Hence no terms with negative powers in $\varepsilon$ appears. The same will be true for all derivatives. As $\mathrm{Re}(\hat{a_\varepsilon} - \hat{a}(0))$ is non-positive all exponentials in (\ref{eq6.8}) can be bounded by one. Hence all derivatives of the exponential in (\ref{eq6.8}) will be bounded by a polynomial in $k$ which has degree the order of the derivative considered. This bound will be uniform in $\varepsilon$. As $\hat\varphi$ decays quicker than any polynomial we obtain the desired convergence.
% Hence only derivatives of the first and second order have to be computed carefully in order to see that the exponent of (\ref{Ep6.1}) converges to
%\begin{equation}
%%e^{ -\varepsilon^{2-\kappa} \int_0^1 (1-s) \int_{\bbbr^d}
%%\langle k,x\rangle^2  e^{is\varepsilon \langle k,x\rangle }
%%a_\varepsilon(x) dx ds} - 
%e^{ -\frac{0^{2-\kappa}}{2}t \langle
%a^{(2)}k,k\rangle } \label{eq6.8b}
%\end{equation}
%As $\mathrm{Re}(\hat{a_\varepsilon} - \hat{a}(0))$ is non-positive (\ref{Eq61.})$ all exponentials can be bounded by one. The $m$-th derivatives in $k$ are polynomially bounded in $k$ of order $m$. Hence up to a polynomial weight Derivatis negative
%are polynomially bounded of order $\varepsilon$. Summarizing, (\ref{eq6.8})
%and all its derivatives converge locally uniformly. As
%$\hat{\varphi} \in \mathcal{D}(\bbbr^d)$, (\ref{eq6.8}) also
%converges in $\mathcal{S}(\bbbr^d)$.
%Appendix~\ref{appB} gives the tools to complete the estimate.
\end{proof}

\subsection{Finite volume system}\label{subsecfv}

The situation is much easier when one starts with a particle system on a compact manifold. Let us explain this in the case, when we consider particle configurations on $[-L/2,L/2]^d$ with periodic boundary conditions, that is we consider the system on the torus. Let us start by giving a description of the system in global coordinates coming from the covering space of the torus. For any sufficiently decaying function $\varphi$ on $\bbbr^d$, denote by $\varphi_L(x) := \sum_{j \in \bbbz^d} \varphi(jL)$. Any function on the torus can be written in this way. Consider the particle system $\Gamma_L := \left\{ \gamma \subset [-L/2, L/2]^d \, : \, | \gamma | < \infty \right\}$ with the jump rate given by $a_L$. The probabilistic construction is the finite product and hence automatically a configuration at least if one allows coinciding points. The particle dynamics extend to this case and can also be reduced in the same way to a pseudo-one-particle dynamics. Denote by $A_L$ the generator of the latter. An easy calculation show that $A_L\varphi_L =(A \varphi)_L$
%\begin{eqnarray*}
%&& \sum_{i,j \in \bbbz} \int_{[-L/2,L/2]^d} a(x-y+iL) ( \varphi(y- jL) - \varphi(x -jL)) dy 
%\\ & = &\sum_{j \in \bbbz} \sum_{i \in \bbbz}\int_{[-L/2,L/2]^d} a(x-iL-y-jL) \varphi(y- jL) - \sum_{j \in \bbbz} \int_{\bbbr^d} a(y) dy \varphi(x -jL))
%\\ & = & \sum_{j \in \bbbz} \int a(x-jL-y) \varphi(y) dy - \int_{\bbbr^d} a(y) dy \varphi(x -jL)) = (A\varphi)_L(x)
%\end{eqnarray*}
 and hence $e^{tA_L}\varphi_L =(e^{tA} \varphi)_L$. %$A$ is a bounded operator
 Therefore, the semi-group of the pseudo-one-particle on the torus can be described in the following way
\begin{equation}\label{eqsemperiod}
\int_{[-L/2,L/2]^d} \left(e^{tA_L}\varphi_L \right)(x)\, z(x) dx = \int_{\bbbr^d} (e^{tA} \varphi)(x) \, P_Lz(x) dx,
\end{equation}
where 
\begin{equation}
P_Lz(x) := \sum_{j \in \bbbz} \Ii_{[-L/2,L/2]^d}(x-jL) z(x-jL)
\end{equation}
is the extension of $z$ to a $(L\bbbz)^d$ periodic function. Note that if $\varphi$ has compact support then $\varphi_L= \varphi$ for $L$ large enough.
In order to connect with our previous consideration we take the Fourier transform and obtain the expression
\begin{equation}\label{eqonepartffv}
(\sqrt{2\pi}L)^{-d} \sum_{j \in \bbbz^d}  e^{t(2\pi)^{d/2}(\hat a (-j/L)-\hat a(0))}
\hat\varphi(-j/L) \widehat{ \Ii_{[-L/2,L/2]^d}}*\hat{z}(j/L).
\end{equation}
\subsubsection{Long time asymptotic:} The behaviour at large time for general $z$ can be easily established in the finite volume. As $\widehat{ \Ii_{[-L/2,L/2]^d}}*\hat{z}$ is a continuous function one sees directly that for fixed $L$, the above converges to $$(\sqrt{2\pi}L)^{-d}  \widehat{ \Ii_{[-L/2,L/2]^d}}*\hat{z}(0) = \frac{1}{L^d} \int_{[-L/2,L/2]^d} z(x) dx $$ with an exponential rate. The latter in turn converges to $\mathrm{mean}(z)$ for $L \rightarrow \infty$. We only used that $a \in L^1$ and that $z \in L^\infty$ such that the arithmetic mean exists in contrast to the extra regularity required in Lemma~\ref{LemasyA}, namely $\hat{z}$ is a signed measure. 

We showed that the convergence is equivalent to the existence of the arithmetic mean without any regularity assumption of $z$ except being bounded. When one first sends $L$ to infinity then (\ref{eqsemperiod}) converges to (\ref{Natal15}) and then considering the time asymptotic afterwards can become much more involved. In the finite volume case one has a spectral gap proportional to $L^{-2}$, which will disappear in the $L \rightarrow \infty $. Let us first consider what happens when one sends $t$ and $L$ simultaneously to infinity but $t$ is sufficiently larger than $L$. We need first a result to control the spectral behaviour of the semi-group near $k=0$.

\begin{lemma}\label{lemboundeaTK}
Assume that $a$ has finite second moments and that the matrix $(a_{i,j}^{(2)})_{i,j=1, \ldots , d}$ is positive definite in the sense that there exists a $\sigma' >0$ such that $
\sum_{i,j=1}^d k_i k_j a_{i,j}^{(2)} \geq \sigma^{-2} |k|^2$ for all $k \in \bbbr^d$.
Then for each $\sigma > \sigma'$ there exists a constant $c >0$ such that
\begin{equation}\label{boundexpTK}
\left| e^{t(2\pi)^{d/2} (\hat a (-k)-\hat a(0))} \right|   \leq \exp\left( -t  \frac{\min \{ |k|^2 , c \}
}{2\sigma^2}   \right) 
\end{equation}
\end{lemma}

\begin{proof}
One can write the exponent in the right hand side of (\ref{boundexpTK}) as

\begin{eqnarray}
%\hat{a}_\varepsilon(-\varepsilon k) -\hat{a}(0) =
 \frac{i}{(2\pi)^{d/2}}
 \int_{\bbbr^d}\!\! \langle x, k\rangle   a(x) dx
-\frac{1}{(2\pi)^{d/2}} \int_0^1 (1-s)
\int_{\bbbr^d}\!\! \langle k,x\rangle^2  e^{is \langle
k,x\rangle } a_\varepsilon(x) dx ds. \label{Eq6.1bb}
\end{eqnarray}
Define a continuous function with values in Hermitian matrices by
\begin{equation}
a_{i,j}^{(2)}(k) :=2 \int_0^1 (1-s)
\int_{\bbbr^d} x_i x_j  \cos( s \langle
k,x \rangle )  a_\varepsilon(x) dx ds
\end{equation}
with $a_{i,j}^{(2)}(0) = a_{i,j}^{(2)}$. The eigenvalues of $(a_{i,j}^{(2)}(k))_{i,j=1, \ldots , d}$ are continuous functions as well, see Chapter~I.3 in \cite{Re69}. Hence for any $\sigma$ there exists a $K$ such that the smallest eigenvalue for all $|k| \leq K$ is still larger then $\sigma$. 

As $a$ is integrable we have that $ \hat a$ is continuous with $\lim_{k \rightarrow \infty} \hat a (-k) =0$ which yields $ \inf_{|k| \geq K} \mathrm{Re}(\hat a (-k)-\hat a(0))) < 0$ and hence the bound follows.
\end{proof}

Let us use this lemma and the bound  
\begin{equation}
\left| (\sqrt{2\pi}L)^{-d} \widehat{ \Ii_{[-L/2,L/2]^d}}*\hat{z}(j/L) \right| \leq 
%\left| (\sqrt{2\pi}L)^{-d} \widehat{ \Ii_{[-L/2,L/2]^d}}*\hat{z}(0) \right| = 
\frac{1}{L^d} \int_{[-L/2,L/2]^d} z(x) dx \leq \| z \|_\infty
\end{equation}
in order to estimate 
\begin{eqnarray*}
&& (\sqrt{2\pi}L)^{-d} \sum_{j \in \bbbz^d\, : \, j \neq 0}  e^{t(2\pi)^{d/2}\mathrm{Re}(\hat a (-j/L)-\hat a(0))}
| \hat\varphi(-j/L)| \widehat{ \Ii_{[-L/2,L/2]^d}}*\hat{z}(j/L)
\\ && \leq \| z \|_\infty \sum_{j \in \bbbz^d\, : \, j \neq 0}  e^{t(2\pi)^{d/2}\mathrm{Re}(\hat a (-j/L)-\hat a(0))}
| \hat\varphi(-j/L)| \\
&& C \| z \|_\infty  L^d e^{-t \frac{\min\{ L^{-2} ,c\} }{2 \sigma}^2} \rightarrow 0,
\end{eqnarray*}  
for some constant $C$. The latter converge to zero so long $L^2 \ln L /t  \rightarrow 0$.

One may think that one can use Lemma~\ref{lemboundeaTK} to show that the time asymptotic of the semi-group $e^{tA}\varphi$ is the same as the one of the heat semi-group also in the infinite particle system, that is $L = \infty$. However, even when one ignores that Lemma~\ref{lemboundeaTK} gives different exponential rates, one sees that the difference between the kernels can at best be bounded near $k=0$ by a term of the form $|k|t^M e^{-ct|k|^2}$, where $M$ is the order of the derivative considered. In the limit $t \rightarrow \infty$ this can be uniformly bounded by a term $|k|^{-2M+1}$, which becomes worth when one uses higher derivatives.

\subsubsection{Hydrodynamic limit:}
In finite volume also the study of the hydrodynamic limit is rather easy. In Fourier coordinates the exponent in (\ref{Natal2}) takes the form
\begin{equation}\label{eqhydrofv}
(\sqrt{2 \pi}  L)^{-d} \sum_{j \in \bbbz^d} e^{t (2\pi)^{d/2} \varepsilon^{-2} (\hat{a}_\varepsilon(-\varepsilon j / L)-\hat{a}(0))} \hat\varphi (-j/ L) \widehat{ \Ii_{[-L/2,L/2]^d}}*\hat{z}(j/ L)
\end{equation}
The only $\varepsilon$ dependence is in the jump kernel.
As $a$ is integrable, $\hat{a}_\varepsilon(-\varepsilon x)-\hat{a}(0))$ converges locally uniformly and without loss of generality we can assume that $\hat{\varphi}$ has compact support, the hydrodynamic limit is established immediately without any further assumptions on $z$. 
Again because of the spectral gap we only need that $a$ has second moment and no further assumptions on $z$ beside boundedness in contrast to Proposition~\ref{PropHyd} and \ref{PropHydW}.

In Proposition~\ref{Prohydrogen} we were able to establish the hydrodynamic limit for a general $z$, but only under the assumption that $a$ has all higher moments finite with increasing requirements for larger dimension $d$ of the one-particle system. 

However, one can establish the hydrodynamic limit with only little stronger requirements on $a$, namely that the third moment is finite for example (any power larger than $2$ should be sufficient as well).  We have to improve the bound a bit in the following way. As all derivatives of $t (2\pi)^{d/2} \varepsilon^{-2} (\hat{a}_\varepsilon(-\varepsilon k)-\hat{a}(0))$ in $k$ are polynomially bounded in $k$ and uniformly bounded in $\varepsilon$, one can use the mean value theorem for the two exponentials in (\ref{eq6.8}) to bound
\begin{equation} \label{eq6.8fv}
\left| 
e^{\varepsilon^{-\kappa}t(2\pi)^{d/2}(\hat{a}_\varepsilon(-\varepsilon
k)-\hat{a}(0))} - e^{ it \langle
a^{(1)},k \rangle  -\frac{0^{2-\kappa}}{2} t \langle
a^{(2)}k,k\rangle }\right|  \leq   C(1+|k|^2)^{1/2} \varepsilon .
\end{equation}
Using that the term $\widehat{ \Ii_{[-L/2,L/2]^d}}*\hat{z}(j/\varepsilon L)$ can be bounded by $\| z\|_\infty L^d$ and
assuming that $ \varepsilon L^d  \rightarrow 0$ we can replace the exponent in (\ref{eqhydrofv}) by $ it \langle
a^{(1)},k \rangle  -\frac{0^{2-\kappa}}{2} t \langle
a^{(2)}k,k\rangle$. Taking the inverse Fourier transform we obtain
\begin{equation}
\int_{\bbbr^d} e^{-t \langle a^{(1)} ,\nabla \rangle - t \langle \nabla, a^{(2)} \nabla \rangle /2}\varphi(x) P_Lz(x) dx
\end{equation}
which converges for example for bounded $z$ and $\varphi \in L^1$ to the same expression with $P_L z$ replaced by $z$.

In both cases, long time asymptotic and hydrodynamic limit, the spectral gap coming from the finite volume is simplifying the situation a lot. The spectral gap being of order $L^{-2}$ is too small to describe the physical relevant decay of the system. However, the infinite volume variant of the two aforementioned cases show that even for $L = \infty$ one has the required result, but which must be based on a different mechanism.

\section{Non-equilibrium dynamics}\label{nonled}

In this section we widen the class of initial distributions to
measures far from equilibrium, that is we consider all probability measures $\mu$ on $\Theta$ as initial distributions subject only to a mild mixing condition. This means that we
consider the processes constructed as in Section \ref{Section2} but
not necessarily with a Poissonian initial distribution.
Assuming enough mixing of the initial measure $\mu$, namely (\ref{CondUrs}), we are able to generalize, incorporating ideas from \cite{DSS82}, Proposition~\ref{PrAsya}, see Proposition~\ref{PrAsyGibbs} in Subsection~\ref{SubsecGenTemp} and Proposition~\ref{Prohydrogen}, see Theorem~\ref{ThHydGibbs} in Subsection~\ref{SubsecGenHydro}.

We formulate the mixinig requirement in terms of the second Ursell function (factorial cumulant), which can be expressed in terms of the first and second correlation function (factorial moments) defined in Subsection~\ref{Subsection2.2},
namely
\[
u_\mu^{(2)}(x,y) := k_\mu(\{x,y\}) - k_\mu(\{x\}) k_\mu(\{y\}).
\]

We denote in the following the first correlation function $x \mapsto k_\mu(\{x\})$ by $\rho_\mu$.
% \[
% (\varphi,\psi ) \mapsto \int_\Theta \left( \langle \varphi, \gamma \rangle - \int_\Theta  \langle \varphi, \gamma' \rangle \mu(d\gamma)\right) \left( \langle \psi, \gamma \rangle - \int_\Theta  \langle \psi, \gamma' \rangle \mu(d\gamma)\right) \mu(d\gamma), \quad \varphi, \psi \in \mathcal{D}(\bbbr^d),
% \]
% which defines a symmetric measure on $\bbbr^d \times \bbbr^d$. We say $\mu$ posses a covariance function
% if this measure has a density w.r.t. the Lebesgue measure denoted by $\mathrm{cov}_\mu$.
The condition on the second Ursell function, (\ref{CondUrs}), is a rather weak mixing or decay of correlation condition.  In Subsection~\ref{SecGibbs} we show that this condition is
 fulfilled, in particular, by Gibbs measures in the high temperature
regime. It will also hold beyond that regime even in the presence of a phase transition,
 cf.~e.g.~\cite{PrBook}.

\subsection{Long time asymptotic \label{SubsecGenTemp}}

Recalling (\ref{onebog}), the Laplace transform of the one-dimensional distribution
$P^{\mathbf{X}}_{\mu,t}$ can be expressed in terms of a one-particle
system, i.e., for all non-negative $f\in \mathcal{S}(\bbbr^d)$
\begin{equation}\label{eq6.12}
\int_\Gamma e^{-\langle f , \gamma \rangle }
P^{\mathbf{X}}_{\mu,t}(d\gamma) = \int_\Theta
e_B(e^{tA}(e^{-f}-1),\gamma) \mu(d\gamma) = \int_\Theta
e^{\langle \ln (e^{tA}(e^{-f}-1)+1),\gamma\rangle} \mu(d\gamma).
\end{equation}

\begin{proposition}
\label{PrAsyGibbs} Let $\mu$ be a measure on $\Theta$ which has first and second correlation function. Assume that $\mu$ fulfills the following mixing condition
\begin{equation}\label{CondUrs}
\sup_{x \in \bbbr^d} \int_{\bbbr^d}u^{(2)}_\mu(x, y) dy < \infty.
\end{equation}
In addition, we assume
that the Fourier transform of the first correlation function
$\rho_\mu$ is a signed measure.
%or that
%$a$ has all moments and that $\int_{\bbbr^d}x_ix_j a(x) dx
%>0 $ for all $i,j=1,\ldots ,d$
Then, the one-dimensional distribution $P^{\mathbf{X}}_{\mu,t}$
converges weakly to $\pi_{\mathrm{mean}(\rho_\mu)}$ when $t$
tends to infinity.
\end{proposition}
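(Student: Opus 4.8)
The plan is to follow the route of Lemma~\ref{lemasympred}: by \cite[Theorem~4.2]{Ka76} it suffices to show that, for every non-negative $f\in\mathcal{D}(\mathbb{R}^d)$, the Laplace functional $\int_\Gamma e^{-\langle f,\gamma\rangle}\,P^{\mathbf{X}}_{\mu,t}(d\gamma)$ converges as $t\to+\infty$ to $\exp\!\big(\mathrm{mean}(\rho_\mu)\int_{\mathbb{R}^d}(e^{-f(x)}-1)\,dx\big)$, which by (\ref{A7}) is the Laplace functional of $\pi_{\mathrm{mean}(\rho_\mu)}$. By (\ref{eq6.12}) this functional equals $\int_\Theta e^{\langle h_t,\gamma\rangle}\,\mu(d\gamma)$, where $g_t:=e^{tA}(e^{-f}-1)$, $h_t:=\ln(1+g_t)$, and where I write $N_t(\gamma):=\langle h_t,\gamma\rangle$. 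Since $\varphi_0:=e^{-f}-1\in\mathcal{D}(\mathbb{R}^d)$ satisfies $-1<\varphi_0\le0$, positivity preservation and the Markovian property of $e^{tA}$ give $-1<g_t\le0$, hence $h_t\le0$ and $0\le e^{N_t}\le1$ for every $t$.

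First I would collect three facts about $g_t$. From the explicit formula (\ref{Natal5}) one has $|g_t(x)|\le(2\pi)^{-d/2}\int_{\mathbb{R}^d} e^{t(2\pi)^{d/2}\mathrm{Re}(\hat a(k)-\hat a(0))}\,|\hat\varphi_0(k)|\,dk$; since $\mathrm{Re}(\hat a(k)-\hat a(0))<0$ for $k\ne0$ (Remark~\ref{Natal6}) and $\hat\varphi_0\in L^1$, dominated convergence gives $\|g_t\|_u\to0$. By the $L^1$-contractivity of $(e^{tA})_{t\ge0}$ (Proposition~\ref{PropSemiA}), $\|g_t\|_{L^1(\mathbb{R}^d,dx)}\le\|\varphi_0\|_{L^1(\mathbb{R}^d,dx)}$ for all $t$. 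Finally, $\rho_\mu$ is non-negative and bounded, $\mathrm{mean}(\rho_\mu)=(2\pi)^{-d/2}\hat\rho_\mu(\{0\})$ exists by Corollary~\ref{Colmeana}, and Lemma~\ref{LemasyA}, applied with $z=\rho_\mu$ and $\varphi=\varphi_0$, yields $\int_{\mathbb{R}^d}g_t(x)\rho_\mu(x)\,dx\to c:=\mathrm{mean}(\rho_\mu)\int_{\mathbb{R}^d}\varphi_0(x)\,dx$. Since $|h_t-g_t|\le g_t^2$ once $\|g_t\|_u\le\frac{1}{2}$, and $\int g_t^2\rho_\mu\,dx\le\|g_t\|_u\|\rho_\mu\|_u\|g_t\|_{L^1}\to0$, we also get $\|h_t\|_u\to0$, $\sup_t\|h_t\|_{L^1(\mathbb{R}^d,dx)}<\infty$, $\int h_t\rho_\mu\,dx\to c$, and $\int h_t^2\rho_\mu\,dx\to0$.

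The key step is a variance estimate. Let $\mathbb{E}_\mu$ and $\mathrm{Var}_\mu$ denote expectation and variance with respect to $\mu$. Expanding $\langle h_t,\gamma\rangle^2$ into its diagonal and off-diagonal parts and using the defining relations of the first and second correlation functions — which is legitimate because boundedness of $\rho_\mu$, $\sup_t\|h_t\|_{L^1}<\infty$ and (\ref{CondUrs}) make $h_t\otimes h_t$ absolutely integrable against the second correlation function, so in particular $N_t\in L^2(\mu)$ — one finds $\mathbb{E}_\mu[N_t]=\int_{\mathbb{R}^d}h_t\rho_\mu\,dx$ and
\[
\mathrm{Var}_\mu(N_t)=\int_{\mathbb{R}^d}h_t(x)^2\rho_\mu(x)\,dx+\int_{\mathbb{R}^d}\!\int_{\mathbb{R}^d}h_t(x)h_t(y)\,u^{(2)}_\mu(x,y)\,dx\,dy.
\]
The first term tends to $0$ by the previous step; the second is bounded in modulus by $\|h_t\|_u\,\|h_t\|_{L^1}\,\sup_{x}\int_{\mathbb{R}^d}|u^{(2)}_\mu(x,y)|\,dy$, hence also tends to $0$, since $\|h_t\|_u\to0$ while $\|h_t\|_{L^1}$ stays bounded and (\ref{CondUrs}) supplies the uniform bound. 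Consequently $\mathbb{E}_\mu[(N_t-c)^2]=\mathrm{Var}_\mu(N_t)+(\mathbb{E}_\mu[N_t]-c)^2\to0$, so $N_t\to c$ in $\mu$-probability. Because $0\le e^{N_t}\le1$ for $t$ large, bounded convergence then gives $\int_\Theta e^{\langle h_t,\gamma\rangle}\,\mu(d\gamma)=\mathbb{E}_\mu[e^{N_t}]\to e^{c}$, which is exactly the required limit of the Laplace functional.

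I expect the main obstacle to be this variance estimate under the single mixing hypothesis (\ref{CondUrs}): one must transfer from $g_t$ to $h_t=\ln(1+g_t)$ while keeping a uniform $L^1$-bound, so that the one-sided decay-of-correlations bound $\sup_x\int|u^{(2)}_\mu(x,y)|\,dy<\infty$ can be combined with $\|h_t\|_u\to0$ to annihilate the pair term, and one must check that the correlation-function identities invoked are valid knowing only that the first two correlation functions exist (which holds once the relevant double integrals are seen to converge absolutely). The remaining ingredients — the Fourier/dominated-convergence facts about $g_t$ and the passage from convergence in probability to convergence of $\mathbb{E}_\mu[e^{N_t}]$ via the sign of $h_t$ — are routine and parallel the Poissonian case treated in Proposition~\ref{PrAsya}.
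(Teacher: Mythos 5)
Your proposal is correct and follows essentially the same route as the paper's proof: the reduction to Laplace functionals via (\ref{eq6.12}), the Fourier/dominated-convergence argument giving $\Vert e^{tA}\varphi\Vert_u\to 0$, the quadratic control of $\ln(1+x)-x$ to pass between the logarithm and its linearization, the identification of the limit via Lemma~\ref{LemasyA} and Corollary~\ref{Colmeana}, and the second-moment (variance) estimate combining $\rho_\mu$ with the Ursell function under (\ref{CondUrs}). The only cosmetic differences are that you run the variance estimate on $\langle\ln(1+g_t),\gamma\rangle$ directly and conclude by convergence in probability plus bounded convergence, whereas the paper first replaces the logarithm by $e^{tA}\varphi$ and uses the Lipschitz bound $|e^{-x}-e^{-y}|\le|x-y|$ to work in $L^1(\mu)$ throughout.
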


\begin{proof}
Given a non-negative $f \in \mathcal{S}(\bbbr^d)$ such that $-1 \leq e^{-f}-1\leq  0$,
let  $\varphi:=1-e^{-f}$.
Using (\ref{eq6.12}) and $|e^{-x} - e^{-y}|\leq |x-y|$ for $x,y \geq 0$ one obtains that
\begin{eqnarray*}
&&\left| \int_\Theta
e^{\langle \ln (e^{tA}(e^{-f}-1)+1),\gamma\rangle} \mu(d\gamma) - e^{\mathrm{mean}(\rho_\mu) \int_{\bbbr^d}(e^{-f}-1)(x)  dx}\right|
\\&\leq&  \int_\Theta \left|
\langle \ln (-e^{tA}\varphi+1),\gamma\rangle - \mathrm{mean}(\rho_\mu) \int_{ \bbbr^d}\varphi(x)  dx\right| \mu(d\gamma)
\end{eqnarray*}
Using that  $0\leq x -\ln(x+1)\leq x^2 $ for $-1/2\leq x \leq 0$ and that
 $\|e^{tA}\varphi\|_u$ tends to zero for $t \rightarrow
\infty$, because $\hat{\varphi} \in L^1(\bbbr^d,dx)$ and
\[
|e^{tA}\varphi(x)|  \leq \frac{1}{(2\pi)^{d/2}}\int_{\bbbr^d}
dk e^{t(2\pi)^{d/2}\mathrm{Re}(\hat{a}(k)-\hat{a}(0))}
|\hat{\varphi}(k)|
\]
and
$(e^{tA})_{t\geq 0}$ is an $L^1(\bbbr^d,dx)$-contraction we get that
\begin{eqnarray*}
&&\int_\Theta \left|
\langle \ln (e^{tA}\varphi+1),\gamma\rangle - \langle e^{tA}\varphi,\gamma\rangle \right| \mu(d\gamma)
\leq \int_{\bbbr^d}  \left(e^{tA}\varphi(x) \right)^2 \rho_\mu(x) dx \\ &\leq& \|e^{tA}\varphi\|_u \int_{\bbbr^d}  e^{tA}\varphi(x) \rho_\mu(x)dx \leq  \|e^{tA}\varphi\|_u \|\varphi\|_{L^1} \|\rho_\mu\|_u.
\end{eqnarray*}
This means it is sufficient to estimate
\begin{eqnarray*}
\lefteqn{\int_\Theta \left|
\langle e^{tA}\varphi,\gamma\rangle - \mathrm{mean}(\rho_\mu) \int_{ \bbbr^d}\varphi(x)  dx\right| \mu(d\gamma)
}\\&\leq& \int_\Theta \left|
\langle e^{tA}\varphi,\gamma\rangle -  \int_{ \bbbr^d}e^{tA}\varphi(x) \rho_\mu(x) dx\right| \mu(d\gamma)
\\&&+ \left|  \int_{ \bbbr^d}e^{tA}\varphi(x) \rho_\mu(x) dx - \mathrm{mean}(\rho_\mu) \int_{ \bbbr^d}\varphi(x)  dx\right|.
\end{eqnarray*}
The last term converges to zero because of Lemma~\ref{LemasyA} and
Corollary~\ref{Colmeana}. Due to the decay properties of the covariance function (\ref{CondUrs})
\begin{eqnarray*}
 \lefteqn{\int_\Theta \left|
\langle e^{tA}\varphi,\gamma\rangle -  \int_{ \bbbr^d}e^{tA}\varphi(x) \rho_\mu(x) dx\right| \mu(d\gamma)}\\
&\leq& \int_{\bbbr^d} \int_{\bbbr^d}e^{tA}\varphi(x) e^{tA}\varphi(y) u^{(2)}_\mu(x,y)dx dy
+ \int_{\bbbr^d} \left( e^{tA}\varphi(x)\right)^2 \rho_\mu(x) dx\\&\leq&  \|e^{tA}\varphi\|_u \|e^{tA}\varphi\|_{L^1} \left( \sup_{x \in \bbbr^d}  \int_{\bbbr^d}u^{(2)}_\mu(x,y) dy + \| \rho_\mu\|_u \right),
\end{eqnarray*}
which implies the result, as $e^{tA}$ is an $L^{1}(\bbbr^d,dx)$ contraction and $\|e^{tA}\varphi\|_u$
converges to zero.
\end{proof}

%
% \begin{proof}
% Given an $f \in \mathcal{S}(\bbbr^d)$ such that $\|e^f-1\|_u
% \leq c$, let  $\varphi:=|e^f-1|$. Using the contractivity of
% $(e^{tA})_{t\geq 0}$ in $L^1(\bbbr^d,dx)$ and in
% $L^\infty(\bbbr^d,dx)$ one finds
% \begin{eqnarray*}
% \lefteqn{\left|\sum_{n=2}^\infty \frac{1}{n!} \int_{\bbbr^{dn}}
% \prod_{i=1}^n e^{tA}(e^f-1)(x_i) u_\mu(\{x_1,\ldots,x_n\}) dx_1\ldots
% dx_n
% \right| }\\
% & \leq& \sum_{n=2}^\infty \frac{\|e^{tA}\varphi\|_u
% \|\varphi\|_u^{n-2}}{n!}\int_{\bbbr^d} e^{tA}\varphi(x_1)
% \left(
% \int_{\bbbr^{d(n-1)}}\hspace{-1cm}|u_\mu(\{x_1,y_1,\ldots,y_{n-1}\})|
% dy_1\ldots
% dy_{n-1}\right) dx_1 \\
% &\leq &  \|e^{tA}\varphi\|_u \|\varphi\|_{L^1} \sup_{x_1 \in
% \bbbr^d}\sum_{n=2}^u \frac{ \|\varphi\|_u^{n-2}}{n!}
% \int_{\bbbr^{d(n-1)}}|u_\mu(\{x_1,y_1,\ldots,y_{n-1}\})| dy_1\ldots
% dy_{n-1}.
% \end{eqnarray*}
% %
% On the one hand, by (\ref{CondUrs}), the series converges and, on the
% other hand, $\|e^{tA}\varphi\|_u$ tends to zero for $t \rightarrow
% \infty$, because $\hat{\varphi} \in L^1(\bbbr^d,dx)$ and
% \[
% |e^{tA}\varphi(x)|  \leq \frac{1}{(2\pi)^{d/2}}\int_{\bbbr^d}
% dk e^{t(2\pi)^{d/2}\mathrm{Re}(\hat{a}(k)-\hat{a}(0))}
% |\hat{\varphi}(k)|.
% \]
%
% As a result, due to (\ref{eq6.12}) and (\ref{eq6.13}) one obtains
% \[
% \lim_{t \rightarrow \infty }\int_\Theta
% P_{\mu,t}^{\mathbf{X}}(d\gamma) e^{\langle f,\gamma\rangle}
%  = \exp \left( \lim_{t \rightarrow
% \infty} \int_{\bbbr^d} e^{tA}(e^f-1)(x) \rho^{(1)}_\mu(x)dx
% \right).
% \]
% The result follows accordingly to Lemma~\ref{LemasyA} and
% Corollary~\ref{Colmeana}.
% %Lemma~\ref{lemasyn}
% \end{proof}

\subsection{Hydrodynamic limits \label{SubsecGenHydro}}

As in Subsection~\ref{SubSecHydro} we want to study the rescaled
empirical field $n_t^{(\varepsilon)}(\varphi,\mathbf{X}) =
\varepsilon^d\left\langle\varphi(\varepsilon\cdot),
\mathbf{X}_{\varepsilon^{-\kappa}t}\right\rangle$. Since we do not
have any longer a natural parameter associated to the initial measure, one
cannot formulate something like slowly varying intensities. However,
one sees that a possible framework is to work with a quite arbitrary
sequence of initial measures $(\mu_\varepsilon)_{\varepsilon >0}$.
The main restriction on this sequence is that one has to assume a
particular convergence for the first correlation measure described below in more details
and the mixing condtion (\ref{CondUrs}) uniformly in $\varepsilon$. In Corollary~\ref{Lem1Ursell} we prove
that these conditions are fulfilled by Gibbs measures in the
high temperature regime for slowly varying intensity $z(\varepsilon
\cdot)$. Furthermore, the limit is identified.

\begin{theorem}
\label{ThHydGibbs} Assume that $a$ has all moments finite. Let
$(\mu_{\varepsilon})_{\varepsilon > 0}$ be a sequence of measures on
$\Theta$ such that
%for some $c>0$
%\begin{equation}
%\label{UrsCond2} \sup_{x \in \bbbr^d}\int_{\Gamma_0}
%|u_{\mu_\varepsilon}(x \cup \eta)| \lambda_c(d\eta)
%\end{equation}
$\rho_{\mu_\varepsilon}$ is uniformly bounded in $\varepsilon$,
%$x \mapsto
%u_{\mu_\varepsilon}(\{x\})$ is uniformly bounded in $x,\varepsilon$
the limit $\lim_{\varepsilon \rightarrow 0^+}
\rho_{\mu_\varepsilon}(\{x/\varepsilon\}) =: \rho_0(x)$ exists for all
$x \in \bbbr^d$ and the following mixing condition
\[
\sup_{x \in \bbbr^d,\varepsilon >0} \int_{\bbbr^d} u^{(2)}_{\mu_{\varepsilon}}(x,y) dy <\infty
\]
holds.
%In particular, this holds for Gibbs measures
%in the high-temperature regime with $x \mapsto \rho^{(1)}_{z(x)}$ as
%$\rho_0$. Let $z$ be a non-negative continuous differentiable function.
Then, for each $t\geq 0$, the following limit exists for all non-negative
$\varphi\in\mathcal{D}(\bbbr^d)$
\begin{equation}
\label{eqhydrolim} \lim_{\varepsilon\to
0^+}\int_\Gamma\mu_\varepsilon(d\gamma)\,
\mathbf{E}_\gamma\left[e^{-n^{(\varepsilon)}_t(\varphi,\mathbf{X})}\right]
=: \int_{\mathcal{D}'(\bbbr^d)} \delta_{\rho_t}(d\omega)
e^{-\langle \varphi, \omega \rangle},
\end{equation}
and the conclusions of Subsection~\ref{SubSecHydro} continue to hold.
%whenever one of the following conditions is fulfilled:
%\begin{enumerate}
%\item If $a^{(1)}=(a^{(1)}_1,\cdots,a^{(1)}_d)\neq 0$, then for
%$\kappa=1$ the limit (\ref{eqhydrolim}) holds for $\rho_t$ equal to
%the solution of $\frac{\partial}{\partial t}\rho_t(x)=
%\mathrm{div}(a^{(1)}\rho_t(x))$ with initial condition $\rho_0$.
%\item If $a^{(1)}= 0$,
%then for $\kappa=2$ the limit (\ref{eqhydrolim}) holds for $\rho_t$
%equal to the strong solution of $\frac{\partial}{\partial
%t}\rho_t(x)=\frac12\sum_{i,j=1}^{d}a_{ij}^{(2)}
%\frac{\partial^2}{\partial x_i\partial x_j}\rho_t(x)$ with initial
%condition $\rho_0$.
%\item As in Proposition~\ref{PropHydW}, let $p$ and $q$ be the even and the odd part of $a$.
%Define $a_\varepsilon = p + \varepsilon q$ and assume that $0\not= q^{(1)}=a^{(1)}\in \bbbr^d$. Then, for
%$\kappa=2$, the limit (\ref{eqhydrolim}) for the dynamics w.r.t.~$a_\varepsilon$ holds for $\rho_t$
%equal to the strong solution of
%$\frac{\partial}{\partial
%t}\rho_t(x)=\mathrm{div}(a^{(1)}\rho_t(x))+
%\frac12\sum_{i,j=1}^{d}a_{ij}^{(2)}\frac{\partial^2}{\partial
%x_i\partial x_j}\rho_t(x)$ with initial condition $\rho_0$.
%\end{enumerate}
\end{theorem}

\begin{proof}
Given a non-negative $\varphi \in \mathcal{S}(\bbbr^d)$ according to (\ref{eq6.12})
one may write (\ref{eqhydrolim}) as
\begin{equation}
%\left| \int_\Gamma\mu_\varepsilon(d\gamma)\,
%\mathbf{E}_\gamma\left[e^{-n^{(\varepsilon)}_t(\varphi,\mathbf{X})}\right]
%- e^{-\int_{\bbbr^d} \varphi(x) \rho_t(x) dx}\right|
%=
\left| \int_\Theta\mu_\varepsilon(d\gamma)\,
e^{\left\langle \ln \left(e^{\varepsilon^{-\kappa}tA_\varepsilon}
(e^{-\varepsilon^{d}\varphi(\varepsilon \cdot)} -1)+1\right),\gamma\right\rangle}
- e^{-\int_{\bbbr^d} \varphi(x) \rho_t(x) dx}\right|
\end{equation}
Using $|e^{-x} - e^{-y}|\leq |x-y|$ for $x,y \geq 0$ one can bound this by
\begin{eqnarray}\label{eqesthydro1a}
  \int_\Theta \left|
\langle \ln \left(e^{\varepsilon^{-\kappa}tA_\varepsilon}
(e^{-\varepsilon^{d}\varphi(\varepsilon \cdot)} -1)+1\right),\gamma\rangle +  \int_{ -\bbbr^d}\varphi(x) \rho_t(x) dx\right| \mu_\varepsilon(d\gamma).
\end{eqnarray}
%Let us show that it is sufficient to consider
%\begin{equation}\label{eqxx34}
%  \int_\Theta \left|\langle e^{\varepsilon^{-\kappa}tA_\varepsilon}
%\varepsilon^{d}\varphi(\varepsilon \cdot),\gamma\rangle
% -  \int_{\bbbr^d}\varphi(x) \rho_t(x) dx\right| \mu_\varepsilon(d\gamma),
%\end{equation}
%indeed, p
Proceeding
as in the proof of Proposition~\ref{PrAsyGibbs} we get that (\ref{eqesthydro1a}) can be bounded by
\begin{eqnarray*}
%  &&\int_\Theta \left|
%\langle \ln \left(e^{\varepsilon^{-\kappa}tA_\varepsilon}
%(e^{-\varepsilon^{d}\varphi(\varepsilon \cdot)} -1)+1\right),\gamma\rangle +
%\langle e^{\varepsilon^{-\kappa}tA_\varepsilon}
%\varepsilon^{d}\varphi(\varepsilon \cdot),\gamma\rangle \right|
% \mu_\varepsilon(d\gamma)\\
%&\leq& \int_{\bbbr^d} \left|\ln \left(e^{\varepsilon^{-\kappa}tA_\varepsilon}
%(e^{-\varepsilon^{d}\varphi(\varepsilon \cdot) }-1)(x)+1\right)- e^{\varepsilon^{-\kappa}tA_\varepsilon}
%(e^{-\varepsilon^{d}\varphi(\varepsilon \cdot)} -1)(x) \right|\rho_{\mu_\varepsilon}(dx) \\
%&&+ \int_{\bbbr^d} \left|e^{\varepsilon^{-\kappa}tA_\varepsilon}
%(e^{-\varepsilon^{d}\varphi(\varepsilon \cdot)} -1)(x) + e^{\varepsilon^{-\kappa}tA_\varepsilon}
%\varepsilon^{d}\varphi(\varepsilon \cdot)(x) \right|\rho_{\mu_\varepsilon}(dx) \\
%&\leq& \int_{\bbbr^d} \left(e^{\varepsilon^{-\kappa}tA_\varepsilon}
%(e^{-\varepsilon^{d}\varphi(\varepsilon \cdot) }-1)(x)\right)^2\rho_{\mu_\varepsilon}(dx) \\
%&&+ \int_{\bbbr^d} \left|e^{\varepsilon^{-\kappa}tA_\varepsilon}
%\left(e^{-\varepsilon^{d}\varphi(\varepsilon \cdot)} -1 +
%\varepsilon^{d}\varphi(\varepsilon \cdot) \right)\right|(x) \rho_{\mu_\varepsilon}(dx) \\
&\leq& \left\| e^{\varepsilon^{-\kappa}tA_\varepsilon}
(e^{-\varepsilon^{d}\varphi(\varepsilon \cdot) }-1)\right\|_u  \left\|e^{\varepsilon^{-\kappa}tA_\varepsilon}
(e^{-\varepsilon^{d}\varphi(\varepsilon \cdot) }-1)\right\|_{L^1}\|\rho_{\mu_\varepsilon}\|_u \\
&&+ \left\|e^{\varepsilon^{-\kappa}tA_\varepsilon}
\left(e^{-\varepsilon^{d}\varphi(\varepsilon \cdot)} -1 +
\varepsilon^{d}\varphi(\varepsilon \cdot)\right) \right\|_{L^1} \|\rho_{\mu_\varepsilon}\|_u .
\end{eqnarray*}
The latter expression is at least of order $\varepsilon^d$ according to Lemma~\ref{lemmHyd} in the Appendix.

Thus it remains to bound 
\begin{eqnarray}
 && \int_\Theta \left|\langle e^{\varepsilon^{-\kappa}tA_\varepsilon}
\varepsilon^{d}\varphi(\varepsilon \cdot),\gamma\rangle
 -  \int_{\bbbr^d}\varphi(x) \rho_t(x) dx\right| \mu_\varepsilon(d\gamma)\\
& \leq &
  \int_\Theta \left|\langle e^{\varepsilon^{-\kappa}tA_\varepsilon} 
\varepsilon^{d}\varphi(\varepsilon \cdot),\gamma\rangle
 -  \int_{\bbbr^d}  \left(e^{\varepsilon^{-\kappa}tA_\varepsilon}\varphi(\varepsilon \cdot) \right)(x/\varepsilon)
\rho_{\mu_\varepsilon}(x/\varepsilon) dx\right| \mu_\varepsilon(d\gamma)
\nonumber \\ &&+ \left|\int_{\bbbr^d}  \left(e^{\varepsilon^{-\kappa}tA_\varepsilon}\varphi(\varepsilon \cdot) \right)(x/\varepsilon)
\rho_{\mu_\varepsilon}(x/\varepsilon) dx -  \int_{\bbbr^d}\varphi(x) \rho_t(x) dx \right| .\label{eqsecsum}
\end{eqnarray}
In Proposition~\ref{Prohydrogen} we show that
$e^{\varepsilon^{-\kappa}tA_{\varepsilon}}\varphi(\varepsilon
\cdot)(x/\varepsilon)$ converges in $\mathcal{S}(\bbbr^d)$ to
\[
e^{-t\langle a^{(1)},\nabla \rangle + t 0^{2-\kappa}/2 \langle
\nabla, a^{(2)} \nabla \rangle}\varphi(x).
\]
By assumption, $\rho_{\mu_\varepsilon}(\{x/\varepsilon\})$  converges, in particular, in
$\|\cdot\|_{0,-d-1,2}$, cf.~(\ref{defnorm}), and thus one obtains
\begin{eqnarray*}
&&\lim_{\varepsilon \rightarrow 0^+} \int_{\bbbr^d
}e^{\varepsilon^{-\kappa}tA_{\varepsilon}}
\varphi(\varepsilon \cdot)(x/\varepsilon)
\rho_{\mu_\varepsilon}(\{x/\varepsilon\})  dx \\
&=& \int_{\bbbr^d }e^{t\left(-\langle a^{(1)},\nabla \rangle +
0^{2-\kappa}/2 \langle \nabla, a^{(2)} \nabla
\rangle\right)}\varphi(x) \rho_0(x) dx.
\end{eqnarray*}
Hence the second summand in (\ref{eqsecsum}) converges to zero. The first summand can be bounded by the second Ursell functions as in the proof of Proposition~\ref{PrAsyGibbs}
\begin{eqnarray*}
%&& \int_\Theta \left|\langle e^{\varepsilon^{-\kappa}tA_\varepsilon}
%\varepsilon^{d}\varphi(\varepsilon \cdot),\gamma\rangle
% -  \int_{\bbbr^d}  \left(e^{\varepsilon^{-\kappa}tA_\varepsilon}\varphi(\varepsilon \cdot) \right)(x/\varepsilon)
%\rho_{\mu_\varepsilon}(x/\varepsilon) dx\right| \mu_\varepsilon(d\gamma) \\
&&\leq \left\|e^{\varepsilon^{-\kappa}tA_\varepsilon}
\varepsilon^{d}\varphi(\varepsilon \cdot) \right\|_u \left\|e^{\varepsilon^{-\kappa}tA_\varepsilon}
\varepsilon^{d}\varphi(\varepsilon \cdot) \right\|_{L^1} \left( \sup_{x \in \bbbr^d,\varepsilon>0} \int_{\bbbr^d} u^{(2)}_{\mu_{\varepsilon}}(x,y) dy + \|\rho_{\mu_\varepsilon} \|_u \right).
\end{eqnarray*}

\end{proof}

\subsection{Application to Gibbs measures \label{SecGibbs}}

In this subsection we prove that the hypothesis for the results of the previous subsection are fulfilled
for a concrete class of non-equilibrium measures, namely, for Gibbs measures in the high
temperature low activity regime.
%First, we recall the notion of
%Gibbs measures and high temperature and low activity regime and then
%check in two corollaries that the conditions of
%Proposition~\ref{PrAsyGibbs}  and of Theorem~\ref{ThHydGibbs} are
%fulfilled.

In order to recall the definition of a Gibbs measure, first we have
to introduce a pair potential
$V:\bbbr^d\to\bbbr\cup\{+\infty\}$, that is, a measurable
function such that $V(-x)=V(x)\in\bbbr$ for all
$x\in\bbbr^d\setminus\{0\}$. For $\gamma\in\Gamma$ and
$x\in\bbbr^d\setminus\gamma$ we define a relative energy of
interaction between a particle located at $x$ and the configuration
$\gamma$ by
\begin{eqnarray*}
E(x,\gamma ):=\left\{
\begin{array}{cl}
\displaystyle\sum_{y\in \gamma }V(x-y), & \mathrm{if\;}
\displaystyle\sum_{y\in \gamma }|V (x-y)|<\infty \\
&  \\
+\infty , & \mathrm{otherwise}
\end{array}
\right. .
\end{eqnarray*}
A probability measure $\mu$ on $\Gamma$ is called a Gibbs measure
corresponding to $V$, an intensity function $z\geq 0$, and an
inverse of temperature $\beta$ whenever it fulfills the
Georgii-Nguyen-Zessin equation \cite[Theorem 2]{NZ79}
\begin{equation}
\int_\Gamma \mu(d\gamma)\,\sum_{x\in \gamma }H(x,\gamma )=
\int_\Gamma \mu(d\gamma)\int_{\bbbr^d}dx\,z(x) H(x,\gamma \cup
\{x\}) e^{-\beta E(x,\gamma )}\label{1.3}
\end{equation}
for all positive measurable functions
$H:\bbbr^d\times\Gamma\to\bbbr$. This definition is
equivalent to the definition via DLR-equation, see
\cite{Ge76,NZ79,P81,K00}. We observe that for $V\equiv 0$
(\ref{1.3}) reduces to the Mecke identity, which yields an
equivalent definition of the Poisson measure $\pi_z$ \cite[Theorem
3.1]{Me67}. We also note that for either $V\equiv 0$ and $z$ not
being a constant or $V\not=0$, a Gibbs measure neither is a
reversible nor an invariant initial distribution for the  free
Kawasaki dynamics under consideration. In order to have
thermodynamical behavior we assume that $V$ is stable, i.e., there
exists a $B>0$ such that $\sum_{\{x,y\} \subset \eta} V(x-y) \geq -
B |\eta|$ for all configurations $\eta \in \Gamma_0$. Furthermore,
we shall assume that the parameters $\beta,z$ are small (high
temperature low activity regime), i.e.,
\[
\|z\|_u e^{2\beta B+1} C(\beta) < 1,
\]
where $C(\beta):=\int_{\bbbr^d}dx\, e^{-\beta |V(x)|}-1$. These
conditions are, in particular, sufficient to insure the existence of
Gibbs measures, cf.~\cite{Ru69} and \cite{PrYu17}. Moreover, the correlation functions
corresponding to such measures exist and fulfil a Ruelle bound defined in
Section \ref{Section2}, and thus, as noted there, they are supported
on $\Theta$.

In the high temperature low activity regime one has rather detailed
information about the Ursell functions (factorial cumulants) $u_\mu:\Gamma_0\rightarrow \bbbr$
corresponding to $\mu$, which are the bounded measurable functions
such that for all $f \in
\mathcal{D}(\bbbr^d)$ holds
\begin{equation}
\label{DefUrs} \int_{\Gamma} e^{\langle f,\gamma \rangle}
\mu(d\gamma) = \exp \left( \int_{\Gamma_0} \prod_{y \in
\eta}(e^{f(y)}-1) u_\mu(\eta ) \lambda(d\eta) \right).
\end{equation}	
The function $x \rightarrow u_\mu(\{x\})$ coincides with the first
correlation function of $\mu$.

%
%We denote by $u_\mu:\Gamma_0\rightarrow \bbbr$ the factorial
%cumulants, also called Ursell functions, corresponding to $\mu$,
%i.e.
%\begin{equation}
%\int_{\Gamma} e^{<f,\gamma>} \mu(d\gamma) = \exp \left(
%\int_{\Gamma_0} e_{\lambda}(e^f-1,\eta) u_\mu(\eta )
%\lambda_{1}(d\eta) \right)
%\end{equation}
%The function $x \rightarrow u_\mu(\{x\})$ coincides with the first
%correlation function of $\mu$.
We present the results necessary for the following. For further
details, see e.g. \cite{DuSoIa75}, \cite{MaMi91} and see also
\cite{Ku98}.

The Ursell functions can be expressed in terms of a sum over all
connected graphs weighted by the Meyer-functions
\begin{equation}
k(\xi) := \sum_{G \in \mathcal{G}_c(\xi)} \prod_{\{x,y\} \in G}
(e^{-\beta V(x-y)}-1)
\end{equation}
where $\mathcal{G}_c(\xi)$ denotes the set of all connected graphs
with vertex set $\xi$:
\begin{equation}
u_\mu(\eta) := \int_{\Gamma_0}\lambda_z(d\xi)  k(\eta \cup \xi)
\prod_{x \in \eta} z(x).
\end{equation}
Actually, the following bound is the key result of the cluster
expansion of Penrose-Ruelle type
\begin{equation}
\label{boundTree} | k(\xi) | \leq e^{2\beta B |\xi|} \sum_{T \in
\mathcal{T}(\xi)} \prod_{\{x,y\} \in G} (e^{-\beta |V(x-y)|}-1),
\end{equation}
where $\mathcal{T}(\xi)$ denotes the set of all trees with set of
vertices $\xi.$ This leads to the following integrability bound
\begin{eqnarray}
\label{boundUrsell1} \lefteqn{\int_{\bbbr^{dn}}
|u_\mu(\{x,y_1,\ldots,y_n\})|
z(y_1) dy_1\ldots z(y_n)dy_n }\nonumber \\
 &\leq& e^{(2\beta B+1) (n+1)}
\left(\|z\|_u C(\beta)\right)^{n} \sum_{m=0}^\infty
\frac{(n+m+1)!}{m!} \left(e^{2\beta B +1}\|z\|_u C(\beta) \right)^m.
\end{eqnarray}
In particular the mixing condition (\ref{CondUrs}) of Proposition~\ref{PrAsyGibbs} and Theorem~\ref{ThHydGibbs} holds.
%In particular, for any $c>0$ such that $e^{2\beta B+1}\|z\|_u
%C(\beta)(1+c) < 1$, the condition (\ref{CondUrs}) holds.

%In order to apply Proposition~\ref{PrAsyGibbs}
We show that Gibbs
measures in the high temperature regime with a translation invariant
potential fulfill the assumptions of Proposition~\ref{PrAsyGibbs}.

\begin{corollary}
Let $z\geq 0$ be a bounded measurable function which Fourier
transform is a bounded signed measure. Let $\mu$ be a Gibbs measure
corresponding to a translation invariant potential $V$ described above,
inverse temperature $\beta$ and activity $z$ which are in the high temperature low
activity regime. Then the first correlation function
$\rho_\mu$ has as Fourier transform a measure and the arithmetic mean
\[
\mathrm{mean}(\rho_\mu) =
\frac{1}{(2\pi)^{d/2}}\sum_{n=0}^\infty\frac{1}{n!}
\int_{\bbbr^{dn}} \overline{\hat{k_r}(p_1,\ldots,p_n)}
\hat{z}(\{p_1+\ldots+p_n\})\hat{z}(dp_1)\cdot\ldots\cdot
\hat{z}(dp_n),
\]
where
\[ k_r(y_1,\ldots,y_n):=\sum_{G \in {\tilde \mathcal{G}}_c}
\prod_{\{x_1,x_2\} \in G} (e^{-\beta V(x_1-x_2)}-1)
\]
and where ${\tilde\mathcal{G}}_c$ denotes the set of all connected graphs
with vertex set $(0,y_1,\ldots,y_n)$. As a consequence, all assumptions
 of Proposition~\ref{PrAsyGibbs} are fulfilled.
\end{corollary}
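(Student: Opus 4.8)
The plan is to feed the Penrose--Ruelle cluster expansion of the first correlation function into a Fourier computation. First I would take $\eta=\{x\}$ in the identity $u_\mu(\eta)=\int_{\Gamma_0}\lambda_z(d\xi)\,k(\eta\cup\xi)\prod_{y\in\eta}z(y)$ and use translation invariance of $V$ (so that $k(\{x,y_1,\dots,y_n\})=k_r(y_1-x,\dots,y_n-x)$) together with the change of variables $w_i=y_i-x$ to write
\[
\rho_\mu(x)=\sum_{n=0}^\infty\rho_\mu^{(n)}(x),\qquad
\rho_\mu^{(n)}(x):=\frac{z(x)}{n!}\int_{\mathbb{R}^{dn}}k_r(w_1,\dots,w_n)\prod_{i=1}^n z(x+w_i)\,dw_1\cdots dw_n .
\]
The tree bound (\ref{boundTree}), combined with Cayley's count $(n+1)^{n-1}$ of trees on $n+1$ labelled vertices and with an edge-by-edge integration, gives $\|k_r\|_{L^1(\mathbb{R}^{dn})}\le e^{2\beta B(n+1)}(n+1)^{n-1}C(\beta)^n<\infty$; since $z$ is bounded this also shows, exactly as in (\ref{boundUrsell1}), that the series for $\rho_\mu$ converges uniformly in the regime at hand, so $\rho_\mu$ is a bounded function (this is what is needed to apply Corollary~\ref{Colmeana}).

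Next I would compute $\widehat{\rho_\mu^{(n)}}$ for each fixed $n$. Since $\hat z$ is a finite signed measure, $z(y)=(2\pi)^{-d/2}\int_{\mathbb{R}^d}e^{i\langle y,p\rangle}\hat z(dp)$; inserting this representation into $z(x)$ and into each factor $z(x+w_i)$, using Fubini to interchange the $w$-integration against $k_r$ with the $(n+1)$-fold product measure $\hat z^{\otimes(n+1)}$ (legitimate because $k_r\in L^1$ and $\|\hat z\|<\infty$), and recognising
\[
\int_{\mathbb{R}^{dn}}k_r(w_1,\dots,w_n)\,e^{i\langle w_1,p_1\rangle}\cdots e^{i\langle w_n,p_n\rangle}\,dw_1\cdots dw_n=(2\pi)^{dn/2}\,\overline{\widehat{k_r}(p_1,\dots,p_n)}
\]
($k_r$ being real-valued), I arrive at
\[
\rho_\mu^{(n)}(x)=\frac{1}{(2\pi)^{d/2}}\int_{(\mathbb{R}^d)^{n+1}}e^{i\langle x,\,p_0+\cdots+p_n\rangle}\,\frac{\overline{\widehat{k_r}(p_1,\dots,p_n)}}{n!}\,\hat z(dp_0)\hat z(dp_1)\cdots\hat z(dp_n).
\]
Hence $\rho_\mu^{(n)}$ is the inverse Fourier transform of the finite complex measure $\nu_n$ obtained as the image of $(n!)^{-1}\overline{\widehat{k_r}}\cdot\hat z^{\otimes(n+1)}$ under $(p_0,\dots,p_n)\mapsto p_0+\cdots+p_n$; thus $\widehat{\rho_\mu^{(n)}}=\nu_n$ and $\|\nu_n\|\le(n!)^{-1}\|\widehat{k_r}\|_\infty\,\|\hat z\|^{n+1}\le(n!)^{-1}(2\pi)^{-dn/2}e^{2\beta B(n+1)}(n+1)^{n-1}C(\beta)^n\|\hat z\|^{n+1}$.

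It then remains to sum in $n$. Using $(n+1)^{n-1}/n!\le e^{n+1}$ the last bound gives $\sum_{n\ge0}\|\nu_n\|<\infty$ in the high temperature low activity regime (the smallness actually used being $(2\pi)^{-d/2}e^{2\beta B+1}C(\beta)\|\hat z\|<1$, which coincides with the stated condition, e.g., for constant or trigonometric intensities, where $\|\hat z\|=(2\pi)^{d/2}\|z\|_u$). Consequently $\nu:=\sum_n\nu_n$ is a finite measure and, interchanging the $n$-sum with the Fourier integral, $\rho_\mu(x)=(2\pi)^{-d/2}\int e^{i\langle x,q\rangle}\nu(dq)$; since $\rho_\mu$ is bounded this identifies $\widehat{\rho_\mu}=\nu$ as a measure, the first assertion. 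Corollary~\ref{Colmeana}(2) then gives that $\rho_\mu$ has arithmetic mean with $\mathrm{mean}(\rho_\mu)=(2\pi)^{-d/2}\widehat{\rho_\mu}(\{0\})=(2\pi)^{-d/2}\sum_n\nu_n(\{0\})$; computing $\nu_n(\{0\})$ amounts to integrating $(n!)^{-1}\overline{\widehat{k_r}}\cdot\hat z^{\otimes(n+1)}$ over the hyperplane $\{p_0+\cdots+p_n=0\}$, i.e.\ integrating out $p_0$ against $\hat z$, which after the substitution $p_i\mapsto -p_i$ and the reality of $z$ (to match the Fourier-transform convention of the statement) produces exactly the claimed series. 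Finally all hypotheses of Proposition~\ref{PrAsyGibbs} are met: the first and second correlation functions exist by the cluster expansion, the mixing condition (\ref{CondUrs}) is (\ref{boundUrsell1}) for $n=1$, and $\widehat{\rho_\mu}$ is a measure by the above.

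I expect the main obstacle to be precisely the passage from the fixed-$n$ identity $\widehat{\rho_\mu^{(n)}}=\nu_n$ to the corresponding statement for $\rho_\mu$: one must control the total variations $\|\nu_n\|$ sharply enough for the infinite $n$-sum to commute with the Fourier transform, and this is exactly where the Penrose--Ruelle tree bound (\ref{boundTree}), Cayley's formula and the low-activity assumption are used. By contrast, the Fubini interchange at fixed $n$ and the sign bookkeeping in the final formula are routine.
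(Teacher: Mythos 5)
Your proposal follows essentially the same route as the paper's proof: the cluster expansion of $\rho_\mu$ combined with translation invariance of $V$, the Fourier representation of $k_r$ (integrable by the tree bound (\ref{boundTree})) and of $z$ via the measure $\hat z$, and then Corollary~\ref{Colmeana}(2) to extract $\mathrm{mean}(\rho_\mu)=(2\pi)^{-d/2}\hat\rho_\mu(\{0\})$. The one place where you go beyond the paper --- summing the total variations $\Vert\nu_n\Vert$ so that $\hat\rho_\mu$ is an honest finite measure --- does, as you yourself note, require smallness of $\Vert\hat z\Vert$ rather than of $\Vert z\Vert_u$ (and only $\Vert z\Vert_u\leq(2\pi)^{-d/2}\Vert\hat z\Vert$ holds in general); but the paper's own argument, which only computes the Fourier transform weakly against test functions, silently needs the same control to sum its series and to make sense of the stated formula for $\mathrm{mean}(\rho_\mu)$, so this is a shared imprecision rather than a defect of your proof relative to the paper's.
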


\begin{proof}
Due to the translation invariance of $V$ and cluster expansion one can rewrite the first
correlation function as
\[
\rho_\mu(x) = \sum_{n=0}^\infty
\frac{1}{n!}\int_{\bbbr^{dn}} \sum_{G \in
{\tilde\mathcal{G}}_c} \prod_{\{y_1,y_2\} \in G} (e^{-\beta
V(y_1-y_2)}-1) z(x_1-x)\cdot \ldots \cdot z(x_n-x) z(x) dx_1\cdot
\ldots \cdot dx_n .
\]
%where ${\tilde\mathcal{G}}_c$ denotes the set of all connected
%graphs with vertex set $(0,y_1,\ldots,y_n):=(0,x_1-x,\ldots,x_n-x)$.
By (\ref{boundTree}) and (\ref{boundUrsell1}) the function $k_r$ is
integrable, and thus  $\hat{k}_r$ is a continuous function which decays
to zero at infinity. Hence
\begin{eqnarray*}
k_r(x_1-x,\ldots,x_n-x) &=& (2\pi)^{-nd/2} \int_{\bbbr^{dn}}
dp_1\ldots dp_n dp e^{ip_1 x_1}\cdot \ldots \cdot e^{ip_nx_n}e^{ipx}
\\
&&\cdot \hat{k}_r(p_1,\ldots,p_n) \delta(p-p_1-\ldots-p_n) .
\end{eqnarray*}
To compute the Fourier transform of $\rho_\mu$ in the weak
sense it remains to compute the Fourier transform of
\begin{eqnarray*}
\lefteqn{\varphi(x) z(x_1-x)\cdot \ldots \cdot z(x_n-x) z(x)}\\
&=&(2\pi)^{-(n+2)d/2} \int_{\bbbr^{dn}} \hat{z}(dp_1)
e^{ip_1x_1}\cdot\ldots\cdot \hat{z}(dp_n) e^{ip_nx_n}\ dp\ e^{ipx}\\
&&e^{-i(p_1+\ldots+p_n)x} \left(\hat{\varphi}\ast\hat{z}\right)(p),
\end{eqnarray*}
for any $\varphi\in \mathcal{D}(\bbbr^d)$. Summarizing, we obtain that
\begin{eqnarray*}
\int_{\bbbr^d} dx \varphi(x) \rho_\mu(x) &=&
\sum_{n=0}^\infty\frac{1}{n!} \int_{\bbbr^{dn}}
\hat{z}(dp_1)\cdot\ldots\cdot \hat{z}(dp_n)\hat{z}(dp) \\ &&\cdot
\overline{\hat{k_r}(p_1,\ldots,p_n)}\hat{\varphi}(p_1+\ldots+p_n-p).
\end{eqnarray*}
\end{proof}

As in Subsection~\ref{SubSecHydro}, we consider initial measures with
a slowly varying intensity, i.e., Gibbs measures corresponding to
$\beta$, $V$, and $z(\varepsilon \cdot)$, which we denote by
$\mu_{\varepsilon}$. As $\|z\|_u$ is unchanged, all scaled measures
$\mu_\varepsilon$ remain in the high temperature low activity
regime and the bound (\ref{boundUrsell1})
%and (\ref{CondUrs})
holds
uniformly in $\varepsilon >0$. For Gibbs measures which are not
Poisson measures, the first correlation function is not any longer
just the intensity. The function appearing as initial value in the
limiting partial differential equation is the scaling limit of the
first correlation function and not just the unscaled activity. Let
us describe what is the scaling limit of the first correlation
function. Denote by $\rho^{\mathrm{equi}}_c$ the correlation function
corresponding to the Gibbs measure with constant activity $c$,
temperature $\beta$ and potential $V$. Due to the translation
invariance of $V$ this correlation function is a constant function.
Given a function $z\geq 0$ denote by $x \mapsto \rho^{\mathrm{equi}}_{z(x)}$
the function which associates to each $x$ the constant value of the
first correlation function of the Gibbs measure with constant
activity $z(x)$. This function is the scaling limit of the first
correlation of $\mu_\varepsilon$. Note that $\rho_\mu (x)=
u_\mu(\{x\})$.

\begin{corollary}
\label{Lem1Ursell} Given a bounded measurable function $z\geq 0$, a
potential $V$, and an inverse  temperature $\beta$  fulfilling the
conditions of the high temperature and low activity regime, the
corresponding Gibbs measures fulfill all assumptions of
Theorem~\ref{ThHydGibbs}. Moreover,
\begin{equation}
\rho_0(x) := \lim_{\varepsilon \rightarrow 0^+}
u_{\mu_\varepsilon}(\{x/\varepsilon\}) = \rho^{\mathrm{equi}}_{z(x)}.
\end{equation}
\end{corollary}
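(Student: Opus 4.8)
The plan is to feed the Penrose--Ruelle cluster expansion recorded in Subsection~\ref{SecGibbs} into the dilation $z\mapsto z(\varepsilon\cdot)$ that defines $\mu_\varepsilon$, verifying the three hypotheses of Theorem~\ref{ThHydGibbs} on the initial data one at a time and then identifying the limit. The starting observation is that $\mu_\varepsilon$ is the Gibbs measure for $V$, $\beta$ and the activity $z(\varepsilon\cdot)$, and since $\|z(\varepsilon\cdot)\|_u=\|z\|_u$ the low-activity smallness condition $\|z\|_u e^{2\beta B+1}C(\beta)<1$ is untouched; hence the tree bound (\ref{boundTree}) and the integrability bound (\ref{boundUrsell1}) hold for every $\mu_\varepsilon$ with the \emph{same} constants. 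Write $q:=e^{2\beta B+1}\|z\|_u C(\beta)<1$ throughout.

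First I would dispatch the two uniform bounds. For the uniform boundedness of $\rho_{\mu_\varepsilon}=u_{\mu_\varepsilon}(\{\cdot\})$ one inserts (\ref{boundTree}) into the Mayer series
\[
u_{\mu_\varepsilon}(\{x\})=z(\varepsilon x)\sum_{n\geq0}\frac1{n!}\int_{\mathbb{R}^{dn}}k(\{x,y_1,\dots,y_n\})\prod_{i=1}^{n}z(\varepsilon y_i)\,dy_i ,
\]
using $\int|k(\{x,y_1,\dots,y_n\})|\,dy_1\cdots dy_n\leq e^{2\beta B(n+1)}(n+1)^{n-1}C(\beta)^n$ (tree bound, Cayley's formula, translation invariance) together with $(n+1)^{n-1}/n!\leq e^{n+1}$; this gives $\|\rho_{\mu_\varepsilon}\|_u\leq\|z\|_u e^{2\beta B+1}\sum_{n\geq0}q^n$, finite and independent of $\varepsilon$. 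For the mixing bound one notes $u^{(2)}_{\mu_\varepsilon}(x,y)=u_{\mu_\varepsilon}(\{x,y\})$ and that the Mayer formula $u_{\mu_\varepsilon}(\{x,y\})=z(\varepsilon x)z(\varepsilon y)\int_{\Gamma_0}\lambda_{z(\varepsilon\cdot)}(d\xi)\,k(\{x,y\}\cup\xi)$ carries the activity weight $z(\varepsilon y)$ on the integration variable, so the same estimate yields $\int_{\mathbb{R}^d}|u^{(2)}_{\mu_\varepsilon}(x,y)|\,dy\leq\|z\|_u e^{2\beta B+1}\sum_{n\geq1}q^n$, uniformly in $x$ and $\varepsilon$; this is exactly (\ref{boundUrsell1}) with $n=1$ for $\mu_\varepsilon$, as already remarked before the statement.

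The substance is the existence and the identification of $\rho_0$. Starting once more from the Mayer expansion and putting $x/\varepsilon$ in place of the spatial argument,
\[
\rho_{\mu_\varepsilon}(\{x/\varepsilon\})=z(x)\sum_{n\geq0}\frac1{n!}\int_{\mathbb{R}^{dn}}k(\{x/\varepsilon,y_1,\dots,y_n\})\prod_{i=1}^{n}z(\varepsilon y_i)\,dy_i ;
\]
translating each $y_i$ by $x/\varepsilon$ and using that $V$, hence the Meyer weights and $k$, is translation invariant rewrites this as $z(x)\sum_{n\geq0}\frac1{n!}\int k(\{0,u_1,\dots,u_n\})\prod_i z(x+\varepsilon u_i)\,du_i$. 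The tree bound dominates the $n$-th summand, uniformly in $\varepsilon>0$ and $x$, by $\|z\|_u e^{2\beta B+1}q^n$, a convergent series, while for fixed $n$ the integrand is dominated by $\|z\|_u^n|k(\{0,u_1,\dots,u_n\})|\in L^1$; so two applications of dominated convergence permit the interchange of $\lim_{\varepsilon\to0^+}$ with $\sum_n$ and with the $du$-integrals. At a point $x$ of continuity of $z$ one has $\prod_i z(x+\varepsilon u_i)\to z(x)^n$, whence
\[
\rho_0(x)=z(x)\sum_{n\geq0}\frac{z(x)^n}{n!}\int_{\mathbb{R}^{dn}}k(\{0,u_1,\dots,u_n\})\,du_1\cdots du_n ,
\]
and the right-hand side is precisely the Mayer expansion of the (constant) first correlation function of the equilibrium Gibbs measure of activity $z(x)$, i.e.\ $\rho^{\mathrm{equi}}_{z(x)}$, which is the asserted formula.

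The hard part is only the analytic bookkeeping of this last step: making the triple interchange $\lim_\varepsilon$/$\sum_n$/$\int du$ rigorous. This is supplied entirely by the Penrose--Ruelle tree estimate (\ref{boundTree}), which is simultaneously $n$-summable (thanks to the smallness condition, which survives the dilation) and an $L^1$ majorant in the space variables; no estimate beyond those already in Subsection~\ref{SecGibbs} is needed. A minor regularity point is the pointwise passage $z(x+\varepsilon u)\to z(x)$, which requires $z$ to be continuous at $x$ (in particular it holds at a.e.\ $x$ when $z$ is continuous a.e., which covers the intended initial profiles); since the proof of Theorem~\ref{ThHydGibbs} only uses weighted-$L^2$ convergence of $\rho_{\mu_\varepsilon}(\{\cdot/\varepsilon\})$, this could be relaxed, but the pointwise formula stated in the corollary is understood at continuity points of $z$.
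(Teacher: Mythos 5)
Your proof is correct and follows essentially the same route as the paper: rewrite $u_{\mu_\varepsilon}(\{x/\varepsilon\})$ via the cluster expansion, use the translation invariance of $V$ to shift the integration variables so that the activity enters as $\prod_l z(x+\varepsilon y_l)$, and pass to the limit term by term using the uniform-in-$\varepsilon$ integrability supplied by the Penrose--Ruelle bounds (\ref{boundTree}) and (\ref{boundUrsell1}). You spell out the dominated-convergence bookkeeping and the verification of the two uniform bounds more explicitly than the paper does, and your caveat that the pointwise passage $z(x+\varepsilon y_l)\to z(x)$ requires continuity of $z$ at $x$ is a fair point that the paper leaves implicit.
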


\begin{proof}
According to the cluster expansion of $u_\mu$ and the translation
invariance of $V$ we obtain that
\begin{equation}
u_{\mu_\varepsilon}(\{x/\varepsilon\}) = \sum_{n=1}^\infty
\frac{1}{n!}\int_{\bbbr^{dn}} dy_l \sum_{G \in
\mathcal{G}_c(\{0,\ldots ,n\})} \prod_{\{i,j\} \in G} (e^{-\beta
V(y_{i}-y_{j})}-1) \prod_{l=1}^n z(\varepsilon y_l +x ) ,
\end{equation}
where $y_0 := 0$. Due to (\ref{boundUrsell1}) the above expression
is uniformly integrable in $\varepsilon$ and in $x$, yielding the
claimed result.
\end{proof}

\appendix

\section{Estimates for hydrodynamic limits}
\label{appest}

\begin{lemma}
\label{lemmHyd} Let $\varphi\in\mathcal{S}(\bbbr^d)$ and $0\leq
\varepsilon \leq 1$ be given. Then
\begin{eqnarray}
\|e^{tA}(e^{\varepsilon^d \varphi(\varepsilon \cdot)}-1)\|_u
&\leq& \varepsilon^d \|\varphi\|_u e^{\|\varphi\|_u} \nonumber \\
\|e^{tA}(e^{\varepsilon^d \varphi(\varepsilon
\cdot)}-1)\|_{L^1(\bbbr^d,dx)}
&\leq& \|\varphi \|_{L^1(\bbbr^d,dx)} e^{\|\varphi \|_{L^1(\bbbr^d,dx)}}\label{Natal9} \\
\|e^{tA}(e^{\varepsilon^d \varphi(\varepsilon \cdot)}-1 -
\varepsilon^d \varphi(\varepsilon \cdot))\|_{L^1(\bbbr^d,dx)}
&\leq& \varepsilon^d \|\varphi \|^2_{L^1(\bbbr^d,dx)}
e^{\|\varphi \|_{L^1(\bbbr^d,dx)}}\nonumber
\end{eqnarray}
\end{lemma}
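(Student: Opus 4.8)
The plan is to reduce everything to two elementary facts. First, by Proposition~\ref{PropSemiA} the semigroup $(e^{tA})_{t\ge 0}$ is a contraction on $B(\mathbb{R}^d)$ (with the norm $\Vert\cdot\Vert_u$) and on $L^1(\mathbb{R}^d,dx)$; hence $\Vert e^{tA}h\Vert_u\le\Vert h\Vert_u$ and $\Vert e^{tA}h\Vert_{L^1(\mathbb{R}^d,dx)}\le\Vert h\Vert_{L^1(\mathbb{R}^d,dx)}$ for all $t\ge 0$, and it suffices to prove the three bounds at $t=0$, i.e.\ directly for $h=e^{\varepsilon^d\varphi(\varepsilon\cdot)}-1$ in the first two estimates and for $h=e^{\varepsilon^d\varphi(\varepsilon\cdot)}-1-\varepsilon^d\varphi(\varepsilon\cdot)$ in the third. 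Second, the pointwise scalar inequalities $|e^u-1|\le|u|\,e^{|u|}$ and $|e^u-1-u|\le\frac{1}{2}u^2e^{|u|}$, which hold for every real $u$ by the Taylor expansion of $\exp$.

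For the $\Vert\cdot\Vert_u$-estimate I would use the sup-norm contraction and then apply $|e^u-1|\le|u|e^{|u|}$ pointwise with $u=\varepsilon^d\varphi(\varepsilon x)$, estimating $|\varphi(\varepsilon x)|\le\Vert\varphi\Vert_u$ and, since $0\le\varepsilon\le 1$, $e^{\varepsilon^d|\varphi(\varepsilon x)|}\le e^{\Vert\varphi\Vert_u}$. For the two $L^1$-estimates I would use the $L^1$-contraction, then the corresponding pointwise inequality, and then the rescaling $y=\varepsilon x$, under which $\int_{\mathbb{R}^d}g(\varepsilon x)\,dx=\varepsilon^{-d}\int_{\mathbb{R}^d}g(y)\,dy$, so that in particular $\Vert\varphi(\varepsilon\cdot)\Vert_{L^1(\mathbb{R}^d,dx)}=\varepsilon^{-d}\Vert\varphi\Vert_{L^1(\mathbb{R}^d,dx)}$. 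In the second estimate the factor $\varepsilon^d$ coming from $|e^u-1|\le|u|e^{|u|}$ exactly cancels the Jacobian $\varepsilon^{-d}$, leaving $\int_{\mathbb{R}^d}|\varphi(y)|\,e^{\varepsilon^d|\varphi(y)|}\,dy\le e^{\Vert\varphi\Vert_u}\Vert\varphi\Vert_{L^1(\mathbb{R}^d,dx)}$. In the third estimate the quadratic inequality produces $\varepsilon^{2d}$ against the Jacobian $\varepsilon^{-d}$, so one power $\varepsilon^d$ remains; bounding one factor $|\varphi(y)|$ by $\Vert\varphi\Vert_u$ and the exponential by $e^{\Vert\varphi\Vert_u}$ (and using $\varepsilon^{2d}\le\varepsilon^d$) gives a bound of the stated order $\varepsilon^d$.

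There is no real obstacle here: the estimates are entirely elementary once the $L^1$- and sup-norm contractivity of $(e^{tA})_{t\ge 0}$ is invoked, and every norm that occurs is finite because $\varphi\in\mathcal{S}(\mathbb{R}^d)$. The only thing that needs a little care is the bookkeeping of the powers of $\varepsilon$ under the rescaling $y=\varepsilon x$; the precise shape of the finite constants on the right-hand sides (e.g.\ whether the exponent reads $\Vert\varphi\Vert_u$ or $\Vert\varphi\Vert_{L^1(\mathbb{R}^d,dx)}$, and whether the prefactor in the last inequality is written with $\Vert\varphi\Vert_{L^1(\mathbb{R}^d,dx)}^2$ or with $\Vert\varphi\Vert_u\Vert\varphi\Vert_{L^1(\mathbb{R}^d,dx)}$) is immaterial for the applications in Propositions~\ref{PropHyd}--\ref{Prohydrogen} and Theorem~\ref{ThHydGibbs}, where only the order $\varepsilon^d$ and the finiteness of the constant are used.
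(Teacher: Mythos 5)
Your proof is correct and follows essentially the same route as the paper: reduce to $t=0$ via the sup-norm and $L^1$ contractivity of $(e^{tA})_{t\geq 0}$, apply an elementary pointwise bound on the exponential (the paper expands the power series termwise where you invoke $|e^u-1|\leq|u|e^{|u|}$ and $|e^u-1-u|\leq\tfrac12 u^2e^{|u|}$, which is the same estimate), and rescale by $y=\varepsilon x$. Your constants (e.g.\ $e^{\Vert\varphi\Vert_u}$ in place of $e^{\Vert\varphi\Vert_{L^1(\mathbb{R}^d,dx)}}$) differ harmlessly from those displayed in the lemma, and as you note only the order $\varepsilon^d$ and finiteness are used downstream.
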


\begin{proof}
On the one hand $(e^{tA})_{t\geq 0}$ is a contraction semigroup with
respect to the the supremum norm. So
\[
\|e^{tA}(e^{\varepsilon^d \varphi(\varepsilon \cdot)}-1)\|_u \leq
\|e^{\varepsilon^d \varphi(\varepsilon \cdot)}-1\|_u \leq
\sum_{n=1}^\infty \frac{\varepsilon^{nd}}{n!} \|\varphi\|_u^n.
\]
On the other hand $(e^{tA})_{t\geq 0}$ is a contraction semigroup on
$L^1(\bbbr^d,dx)$. Therefore, the left-hand side of
(\ref{Natal9}) is bounded by
\[
\int_{\bbbr^d}dx\, \frac{|e^{\varepsilon^d \varphi(x)}-1|}
{\varepsilon^d} \leq  \|\varphi\|_{L^1(\bbbr^d,dx)} \sum_{n=1}^\infty
\frac{\varepsilon^{d(n-1)}}{n!} \|\varphi\|_{L^1(\bbbr^d,dx)}^{n-1}\leq
\|\varphi \|_{L^1(\bbbr^d,dx)} e^{\|\varphi \|_{L^1(\bbbr^d,dx)}}.
\]
The last inequality follows by a similar computation.
\end{proof}

%
%\section{Norm estimates in $\mathcal{S}(\bbbr^d)$}\label{appB}
%
%
%
Let us introduce the following two equivalent systems of norms for
the locally convex topological vector space
$\mathcal{S}(\bbbr^d)$ for $A \in \bbbn$ and $M\geq 0$:
\begin{eqnarray}
\|f \|_{A,M,u} &:=& \sum_{\stackrel{ \scriptstyle
\alpha \in \bbbn_0^d}{|\alpha|\leq A}}
\sup_{x \in \bbbr^d} \left| D^\alpha f (x) \right|
\left(1 + |x|^2\right)^M \\
\|f \|_{A,M,2} &:=& \sum_{\stackrel{ \scriptstyle \alpha \in
\bbbn_0^d}{|\alpha|\leq A}} \left(\int_{ \bbbr^d} \left|
D^\alpha f (x) \right|^2 \left(1 + |x|^2\right)^M dx\right)^{1/2}
\label{defnorm}
\end{eqnarray}

\begin{lemma}
Let $f_1,f_2 \in C^\infty(\bbbr^d;\bbbc)$ be two $C^\infty$-functions  with
non-positive real part such that for an $A \in \bbbn$ and a
$M\geq 0$ one has $\|f_i\|_{A,-M,u}< \infty$. Then there exists a
constant $C$ depending on $A$ and on $\|f_i\|_{A,-M,u}$ such that
\begin{equation}
\|e^{f_1} -e^{f_2}\|_{A,-(A+1)M,2}
\leq C \| f_1 -f_2 \|_{A,-M,2}.
\end{equation}
\end{lemma}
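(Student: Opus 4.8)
The plan is to isolate the factor $f_1-f_2$ by the elementary identity
\[
e^{f_1}-e^{f_2}=(f_1-f_2)\,R,\qquad R:=\int_0^1 e^{\,s f_1+(1-s)f_2}\,ds,
\]
and then to differentiate. Since $\mathrm{Re}\big(sf_1+(1-s)f_2\big)\le 0$ on $\mathbb{R}^d$ for every $s\in[0,1]$, one has $|e^{\,sf_1+(1-s)f_2}|\le 1$, and in particular $|e^{f_1}-e^{f_2}|\le|f_1-f_2|$. The first step is to control the derivatives of the smooth remainder $R$. By the multivariate Fa\`a di Bruno formula, for a multi-index $\gamma$,
\[
D^\gamma e^{\,sf_1+(1-s)f_2}=e^{\,sf_1+(1-s)f_2}\sum_{\pi}\ \prod_{B\in\pi}D^{\gamma_B}\!\big(sf_1+(1-s)f_2\big),
\]
the sum running over the partitions $\pi$ of $\gamma$ into nonempty blocks $B$ with $\sum_B\gamma_B=\gamma$. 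Bounding $|D^{\gamma_B}(sf_1+(1-s)f_2)(x)|\le K\,(1+|x|^2)^{M}$ with $K:=\max_{i}\|f_i\|_{A,-M,u}$ (a quantity the final constant is allowed to depend on), and using $(1+|x|^2)^{M}\ge1$, one gets $|D^\gamma e^{\,sf_1+(1-s)f_2}(x)|\le C(A,K)\,(1+|x|^2)^{|\gamma|M}$ uniformly in $s\in[0,1]$, hence $|D^\gamma R(x)|\le C(A,K)\,(1+|x|^2)^{|\gamma|M}$ for all $|\gamma|\le A$.

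Next I would apply the Leibniz rule: for $|\alpha|\le A$,
\[
D^\alpha\big(e^{f_1}-e^{f_2}\big)=\sum_{\beta\le\alpha}c_{\alpha,\beta}\,D^\beta(f_1-f_2)\,D^{\alpha-\beta}R,
\]
with nonnegative coefficients $c_{\alpha,\beta}$. Combining this with the bound on $D^{\alpha-\beta}R$ and the inequality $(\sum_{j\le N}a_j)^2\le N\sum_{j\le N}a_j^2$ for the finitely many ($N=N(A)$) summands yields
\[
|D^\alpha(e^{f_1}-e^{f_2})(x)|^2\le C(A,K)\sum_{\beta\le\alpha}|D^\beta(f_1-f_2)(x)|^2\,(1+|x|^2)^{2(|\alpha|-|\beta|)M}.
\]
Multiplying by the weight $(1+|x|^2)^{-(A+1)M}$, integrating over $\mathbb{R}^d$, summing over $|\alpha|\le A$ and $\beta\le\alpha$, and taking square roots should then produce $\|e^{f_1}-e^{f_2}\|_{A,-(A+1)M,2}\le C(A,K)\,\|f_1-f_2\|_{A,-M,2}$.

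The step I expect to be the main obstacle is the last one: one must check that, after insertion of the weight $(1+|x|^2)^{-(A+1)M}$, the accumulated power of $(1+|x|^2)$ on each summand is controlled by the weight $(1+|x|^2)^{-M}$ carried by the corresponding term of $\|f_1-f_2\|_{A,-M,2}$; where a low-order derivative of $f_1-f_2$ appears together with many derivatives hitting the exponential, weights have to be redistributed (using that $D^\beta(f_1-f_2)$ is itself bounded by $2K\,(1+|x|^2)^{M}$, again a controlled quantity) so that a genuine $L^2$-factor of $f_1-f_2$ survives. An equivalent and possibly cleaner route for this bookkeeping is to write $D^\alpha(e^{f_1}-e^{f_2})=e^{f_1}Q_\alpha(f_1)-e^{f_2}Q_\alpha(f_2)$ with $Q_\alpha$ the universal Fa\`a di Bruno polynomial in the derivatives of its argument, telescope as $(e^{f_1}-e^{f_2})Q_\alpha(f_1)+e^{f_2}\big(Q_\alpha(f_1)-Q_\alpha(f_2)\big)$, and telescope each monomial of $Q_\alpha(f_1)-Q_\alpha(f_2)$ so that exactly one factor is a derivative of $f_1-f_2$; all remaining factors are then dominated by the $\|\cdot\|_{A,-M,u}$ norms, and only the combinatorics of which factor carries the difference needs care.
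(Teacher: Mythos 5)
Your decomposition $e^{f_1}-e^{f_2}=(f_1-f_2)R$ with $R=\int_0^1 e^{sf_1+(1-s)f_2}\,ds$, followed by Fa\`a di Bruno for $D^\gamma R$ and the Leibniz rule, is a genuinely different organization from the paper's, which expands $D^\alpha e^{f_i}$ directly into monomials in the derivatives of $f_i$ and telescopes the difference so that exactly one factor is $D^{\alpha_l}(f_1-f_2)$, plus a remainder $\prod_j D^{\alpha_j}f_2\,(e^{f_1}-e^{f_2})$ handled via $|e^{z}-e^{w}|\le |z-w|$ for $\mathrm{Re}\,z,\mathrm{Re}\,w\le 0$; your second, ``cleaner'' alternative is essentially the paper's argument. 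Everything up to and including the pointwise bound $|D^\alpha(e^{f_1}-e^{f_2})|^2\le C\sum_{\beta\le\alpha}|D^\beta(f_1-f_2)|^2(1+|x|^2)^{2(|\alpha|-|\beta|)M}$ is correct.

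The gap is exactly where you locate it, and it cannot be closed as stated. In the worst summand $\beta=0$, $|\alpha|=A$, you must absorb $(1+|x|^2)^{2AM}$; after inserting the available weight $(1+|x|^2)^{-(A+1)M}$ you are left with $(1+|x|^2)^{(A-1)M}$ against the target $(1+|x|^2)^{-M}$, a deficit of $(1+|x|^2)^{AM}$. The proposed redistribution via $|D^\beta(f_1-f_2)|\le 2K(1+|x|^2)^M$ moves weight in the wrong direction: it introduces further positive powers and destroys the $L^2$ factor you need to retain. In fact no bookkeeping can succeed, because the inequality with the weight $-(A+1)M$ is false. Take $d=1$, $A=M=1$, $f_2=g:=i(1+x^2)\sin x$ (so $\mathrm{Re}\,g=0$, $\|g\|_{1,-1,u}<\infty$, yet $|g'|\asymp 1+x^2$ on large sets) and $f_1=g-\phi(\cdot-n)$ for a fixed bump $\phi$: then $D(e^{f_1}-e^{f_2})$ contains $g'e^{g}(e^{-\phi(\cdot-n)}-1)$, so $\|e^{f_1}-e^{f_2}\|_{1,-2,2}$ is bounded below along a suitable sequence $n\to\infty$ while $\|f_1-f_2\|_{1,-1,2}=O(1/n)$ and $\|f_i\|_{1,-1,u}$ stays bounded. (The paper's own proof has the same defect: in the remainder term the weight comes out as $2kM-(A+1)M$, not $-M$.) Statement and proof are repaired by replacing the left-hand weight $-(A+1)M$ by $-(2A+1)M$; with that exponent your scheme closes with no redistribution at all, since $2(|\alpha|-|\beta|)M-(2A+1)M\le -M$, and this weaker version suffices for the only place the lemma is used, the convergence in $\mathcal{S}(\mathbb{R}^d)$ in Proposition~\ref{Prohydrogen}.
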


\subsection*{Acknowledgment} We would like to thanks N.~Jacob, E.~Lytvynov, M.~R{\"o}ckner, W.~Hoh for fruitful discussions, T. Hilberdink for mentioning the theory slowly varying functions and T. Funaki for making us aware of \cite{DSS82}.  M.~J.~O.~was supported by the Portuguese national funds through FCT Funda{\c c}\~ao para a Ci{\^e}ncia e a Tecnologia, within the project UIDB/04561/2020. Financial support from DFG through the SFB 701 (Bielefeld University) and FCT through, POCI-2010, FEDER, PTDC, UIDB/MAT/04674/2020 are gratefully acknowledged.

% \addcontentsline{toc}{section}{References}
%\bibliographystyle{alpha}
%\bibliography{oliveira}

\begin{thebibliography}{BMPK04}

\bibitem[AKR98]{AKR97}
S.~Albeverio, {Yu}.~G. Kondratiev, and M.~R{\"o}ckner.
\newblock Analysis and geometry on configuration spaces.
\newblock {\em J.~Funct.~Anal.}, 154(2):444--500, 1998.


\bibitem[BKO13]{BKO13}
C.~Berns, Yu.~Kondratiev, and O.~Kutoviy. \newblock
Construction of a state evolution for Kawasaki dynamics in continuum. 
\newblock {\em Anal. Math. Phys.} 3(2): 97?117,  2013. 

\bibitem[BCC02]{BCC02}
L.~Bertini, N.~Cancrini, and F.~Cesi.
\newblock The spectral gap for a {G}lauber-type dynamics in a continuous gas.
\newblock {\em Ann. Inst. H. Poincar\'e Probab. Statist.}, 38:91--108, 2002.

\bibitem[BK88]{BeKo88}
{Yu}.~M. Berezansky and {Yu}.~G. Kondratiev.
\newblock {\em Spectral Methods in Infinite-Dimensional Analysis}.
\newblock Naukova Dumka, Kiev, 1988.
\newblock (in Russian). English translation, {K}luwer {A}cademic {P}ublishers,
  {D}ordrecht, 1995.

%\bibitem[BMPK04]{BaMePrKu03b}
%F.~Baffioni, I.~Merola, E.~Presutti, and T.~Kuna.
%\newblock A relativized {D}obrushin uniqueness condition and applications to
 % {P}irogov-{S}inai models.
%\newblock Preprint 04-107, mp-arc, 2004.



\bibitem[Bog46]{Bo62}
N.~N. Bogoliubov.
\newblock {\em Problems of a dynamical theory in statistical physics}.
\newblock Gostekhisdat, Moskau, 1946.
\newblock In Russian. English translation in J. de Boer and G. E. Uhlenbeck,
  editors, {\it Studies in Statistical Mechanics}, volume 1, pages 1-118,
  Amsterdam, 1962. North-Holland.

\bibitem[CMS05]{CMS05}
G.~Cavallaro, C.~Marchioro, and C.~Spitoni. \newblock
Dynamics of infinitely many particles mutually interacting in three dimensions via a bounded superstable long-range potential. \newblock
{\em J. Stat. Phys.} 120(1-2):367--416,  2005.


\bibitem[dMP91]{PR91}
A.~De~Masi and E.~Presutti.
\newblock {\em Mathematical methods for hydrodynamic limits}, volume 1501 of
  {\em Lecture Notes in Mathematics}.
\newblock Springer-Verlag, Berlin, 1991.

\bibitem[Dob56]{Do56}
R.~L. Dobrushin.
\newblock On {P}oisson's law for distribution of particles in space.
\newblock {\em Ukrain. Mat. Zh.}, pages 127--134, 1956.
\newblock (In Russian).

\bibitem[Doo53]{D53}
J.~L. Doob.
\newblock {\em Stochastic Processes}.
\newblock John Wiley \& Sons, New York and London, 1953.

\bibitem[DSI75]{DuSoIa75}
M.~Duneau, B.~Souillard, and D.~Iagolnitzer.
\newblock Decay of correlations for infinite-range interactions.
\newblock {\em J. Math. Phys}, 16(8):1662--1666, 1975.

\bibitem[DSS82]{DSS82}
R.~L. Dobrushin and Ra. Siegmund-Schultze.
\newblock The hydrodynamic limit for systems of particles with independent
  evolution.
\newblock {\em Math.~Nachr.}, 105:199--224, 1982.

\bibitem[EK86]{EtKu86}
S.~N. Ethier and T.~G. Kurtz.
\newblock {\em Markov Processes. Characterization and Convergence}.
\newblock Wiley Series in Probability and Mathematical Statistics. John Wiley
  \& Sons, NewYork, Chichester, Brisbane, Toronto, and Singapore, 1986.


\bibitem[FKO09]{FKO09}
D.~Finkelshtein, Yu.~G.~Kondratiev and M.~J.~Oliveira. \newblock
{M}arkov evolutions and hierarchical equations in the continuum. {I.} One-component systems. 
\newblock {\em J. Evol. Equ.} 9(2):197?233, 2009.

\bibitem[Fri87]{Fr87}
J.~Fritz.
\newblock Gradient dynamics of infinite point systems.
\newblock {\em Ann. Probab.}, 15(2):478--514, 1987.


\bibitem[GK06]{GK06}
N.~Garcia and T.~Kurtz. \newblock
Spatial birth and death processes as solutions of stochastic equations. \newblock
{\em ALEA Lat. Am. J. Probab. Math. Stat.} 1:281--303,  2006. 

\bibitem[Geo76]{Ge76}
H.~O. Georgii.
\newblock Canonical and grand canonical {G}ibbs states for continuum systems.
\newblock {\em Comm. Math. Phys.}, 48:31--51, 1976.

\bibitem[Gl{\"o}81]{G81}
E.~Gl{\"o}tzl.
\newblock Time reversible and {G}ibbsian point processes. {I.} {M}arkovian
  spatial birth and death processes on a general phase space.
\newblock {\em Math. Nachr.}, 102:217--222, 1981.

\bibitem[Gl{\"o}82]{G82}
E.~Gl{\"o}tzl. \newblock
Time reversible and {G}ibbsian point processes. {II.} {M}arkovian particle jump processes on a general phase space. \newblock
{\em Math. Nachr.}, 106:63?71 1982.

\bibitem[Inf16]{In16} 
M. Infusino,\newblock
Quasi-analyticity and determinacy of the full moment problem from finite to infinite dimensions,\newblock
{\em Stochastic and Infinite Dimensional Analysis}, {Chap.~9}: 161--194, Trends in Mathematics, Birkh\"auser, 2016. 


\bibitem[IKR14]{ikr14} M. Infusino, T. Kuna,  and A. Rota. 
\newblock The full infinite dimensional moment problem on semi-algebraic sets of generalized functions. \newblock {\em J. Funct. Analysis}, 267(5):1382--1418, 2014.

\bibitem[IK20]{ik20}M. Infusino and T. Kuna.
\newblock The full moment problem on subsets of probabilities and point configurations, \newblock {\em J. Math. Anal. Appl}., 483(1), 123551, 2020.


\bibitem[Jac01]{J01}
N.~Jacob.
\newblock {\em Pseudo-Differential Operators and Markov Processes}, volume I.
  Fourier Analysis and Semigroups.
\newblock Imperial College Press, London, 2001.

\bibitem[Kal76]{Ka76}
O.~Kallenberg.
\newblock {\em Random Measures}.
\newblock Academic Press, London, New York, second edition, 1976.


\bibitem[KK02]{KoKu99}
{Yu}.~G. Kondratiev and T.~Kuna.
\newblock Harmonic analysis on configuration space {I}. {G}eneral theory.
\newblock {\em Infin.~Dimens.~Anal.~Quantum Probab.~Relat.~Top.},
  5(2):201--233, 2002.

\bibitem[KKK04]{KoKuKva}
{Yu}.~G. Kondratiev, T.~Kuna, and O.~Kutoviy.
\newblock On relations between a priori bounds for measures on configuration
  spaces.
\newblock {\em Infin.~Dimens.~Anal.~Quantum Probab.~Relat.~Top.},
  7(2):195--213, 2004.


\bibitem[KKO13]{KKO13}
Yu.~G.~Kondratiev, T.~Kuna, and N.~Ohlerich. \newblock
Spectral gap for Glauber type dynamics for a special class of potentials.\newblock
{\em Electron. J. Probab.}, 18(42) 2013. 

\bibitem[KKO02]{KoKuOl00}
{Yu}.~G. Kondratiev, T.~Kuna, and M.~J. Oliveira.
\newblock Analytic aspects of {P}oissonian white noise analysis.
\newblock {\em Methods Funct.~Anal.~Topology}, 8(4):15--48, 2002.

\bibitem[KKO06]{KoKuOl02}
{Yu}.~G. Kondratiev, T.~Kuna, and M.~J. Oliveira.
\newblock Holomorphic {B}ogoliubov functionals for interacting particle systems
  in continuum.
\newblock {\em J. Funct. Anal.}, 238(2):375--404, 2006.

\bibitem[KKO06]{KoKuOl08}
{Yu}.~G. Kondratiev, T.~Kuna, and M.~J. Oliveira. \newblock
Extension of explicit formulas in {P}oissonian white noise analysis using harmonic analysis on configuration spaces
\newblock  \newblock {\em Condensed Matter Physics,} \textbf{Vol.}~11, 237-24, \newblock (2008).

\bibitem[KKZ06]{KoKtZh06}
Yu. Kondratiev, O.~Kutoviy, and E.~Zhizhina.
\newblock Non-equilibrium {G}lauber type dynamics in continuum.
\newblock {\em J. Math. Phys.}, 47(11):113501, 2006.

\bibitem[KL99]{KL99}
C.~Kipnis and C.~Landim. \newblock
{\em Scaling limits of interacting particle systems}. \newblock
Grundlehren der mathematischen Wissenschaften, Springer-Verlag, Berlin, 1999


\bibitem[KL05]{KL05}
Yu. Kondratiev and E.~Lytvynov.
\newblock Glauber dynamics of continuous particle systems.
\newblock {\em Ann. Inst. H. Poincar{\'e} Probab. Statist.}, 41(4):685--702,
  2005.

\bibitem[KLR03]{KLR03}
Yu. Kondratiev, E.~Lytvynov, and M.~R{\"o}ckner.
\newblock The heat semigroup on configuration spaces.
\newblock {\em Publ.~Res.~Inst.~Math.~Sci.}, 39(1):1--48, 2003.

\bibitem[KLR07]{KLRII05}
{Yu}.~G. Kondratiev, E.~Lytvynov, and M.~R{\"o}ckner.
\newblock Equilibrium {K}awasaki dynamics of continuous particle systems.
\newblock {\em Infinite Dimensional Analysis, Quantum Probability and Related
  Topics}, 7(4):185--209, 2007.

\bibitem[KLR09]{KLR05}
{Yu}.~G. Kondratiev, E.~Lytvynov, and M.~R{\"o}ckner.
\newblock Non-equilibrium stochastic dynamics in continuum: the free case.
\newblock {\em Condensed Matter Physics}, 11(4(56)):701--721, 2009.

\bibitem[KSSU98]{KSSU97}
{Yu}.~G. Kondratiev, J.~L. Silva, L.~Streit, and G.~F. Us.
\newblock Analysis on {P}oisson and {G}amma spaces.
\newblock {\em Infin.~Dimens.~Anal.~Quantum Probab.~Relat.~Top.}, 1(1):91--117,
  1998.

\bibitem[Kun99]{K00}
T.~Kuna.
\newblock {\em Studies in Configuration Space Analysis and Applications}.
\newblock PhD thesis, Bonner Mathematische Schriften Nr.~324, University of
  Bonn, 1999.

\bibitem[Kun01]{Ku98}
T.~Kuna.
\newblock Properties of marked {G}ibbs measures at high temperature regime.
\newblock {\em Methods Funct. Anal. and Topology}, 7:33--54, 2001.

\bibitem[Kun23]{Ku03}
T.~Kuna.
\newblock Bochner's theorem for point processes via {B}ogoliubov functionals.
\newblock {I}n preparation, 2023.

\bibitem[Len75a]{Le75a}
A.~Lenard.
\newblock States of classical statistical mechanical systems of infinitely many
  particles {I}.
\newblock {\em Arch.~Rational Mech.~Anal.}, 59:219--239, 1975.

\bibitem[Len75b]{Le75b}
A.~Lenard.
\newblock States of classical statistical mechanical systems of infinitely many
  particles {II}.
\newblock {\em Arch. Rational Mech. Anal.}, 59:241--256, 1975.

\bibitem[Mec67]{Me67}
J.~Mecke.
\newblock Station\"are zuf\"allige {M}a\ss e auf lokalkompakten {A}belschen
  {G}ruppen.
\newblock {\em Z. Wahrsch. verw. Gebiete}, 9:36--58, 1967.

\bibitem[MM91]{MaMi91}
V.~A. Malyshev and R.~A. Minlos.
\newblock {\em Gibbs Random Fields: Cluster expansions}.
\newblock Kluwer Academic Publishers, Dordrecht, Boston, and London, 1991.

\bibitem[NZ76]{NZ77}
X.~X. Nguyen and H.~Zessin.
\newblock Martin-{D}ynkin boundary of mixed {P}oisson processes.
\newblock {\em Z.~Wahrsch.~verw.~Gebiete}, 37:191--200, 1976.

\bibitem[NZ79]{NZ79}
X.~X. Nguyen and H.~Zessin.
\newblock Integral and differential characterizations of the {G}ibbs process.
\newblock {\em Math.~Nachr.}, 88:105--115, 1979.

\bibitem[P08]{P08}
M.~Penrose.\newblock
Existence and spatial limit theorems for lattice and continuum particle systems. \newblock
{\em Probability Surveys}, 5:1--36, 2008.

\bibitem[Pr09]{PrBook}
E.~Presutti. \newblock {\em Scaling limits in statistical mechanics and microstructures in continuum mechanics}.
\newblock Theoretical and Mathematical Physics. Springer, Berlin, 2009.


\bibitem[PY17]{PrYu17}
A.~Procacci and S.~A.~Yuhjtman. \newblock
Convergence of {M}ayer and Virial expansions and the {P}enrose tree-graph identity. 
\newblock {\em Lett.~Math.~Phys.}, 107:31-46, 2017.

\bibitem[Puz81]{P81}
J.~Puzicha.
\newblock {\em {E}xistenz und {C}harakterisierung von {G}ibbsma{\ss}en}.
\newblock Master thesis, {U}niversity of {B}ielefeld, 1981.

\bibitem[Re69]{Re69}
F.~Rellich.
\newblock {\em Perturbation theory of eigenvalue problems}.
\newblock Gordon and Breach, New York, London, and Paris, 1969.

\bibitem[Rue69]{Ru69}
D.~Ruelle.
\newblock {\em Statistical Mechanics. Rigorous Results}.
\newblock Benjamin, New York and Amsterdam, 1969.

\bibitem[Rue70]{Ru70}
D.~Ruelle.
\newblock Superstable interactions in classical statistical mechanics.
\newblock {\em Comm. Math. Phys.}, 18:127--159, 1970.

\bibitem[Sin89]{S89}
{Ya}.~G. Sinai, editor.
\newblock {\em Ergodic Theory with Applications to Dinamical Systems and
  Statistical Mechanics}, volume~II of {\em Encyclopaedia Math.~Sci.}, chapter
  III. Dynamical systems of statistical Mechanics and Kinetic Equations.
\newblock Springer-Verlag, Berlin Heidelberg, 1989.

\bibitem[Spo86]{Sp86}
H.~Spohn.
\newblock Equilibrum fluctuations for interacting {B}rownian particles.
\newblock {\em Comm.~Math.~Phys.}, 103:1--33, 1986.

\end{thebibliography}

\end{document}